\newtheorem{theorem}{Theorem}[section]
\newtheorem{lemma}[theorem]{Lemma}
\newtheorem{corollary}[theorem]{Corollary}
\newtheorem{proposition}[theorem]{Proposition}
\newtheorem{remark}[theorem]{Remark}
\newtheorem{definition}[theorem]{Definition}
\numberwithin{equation}{section}
\newcommand{\Q}{{\mathbb{Q}}}
\newcommand{\Z}{{\mathbb{Z}}}
\newcommand{\F}{{\mathbb{F}}}
\newcommand{\ds}{\displaystyle}
\newcommand{\ov}{\overline}
\newcommand{\ft}{\footnotesize}
\newcommand{\ns}{\normalsize}
\newcommand{\order}{\raise0.8pt \hbox{${\scriptstyle \#}$}}
\newcommand{\plus}{\ds\mathop{\raise 0.5pt \hbox{$\bigoplus$}}\limits}
\newcommand{\prd}{\ds\mathop{\raise 1.0pt \hbox{$\prod$}}\limits}
\newcommand{\sm}{\ds\mathop{\raise 1.0pt \hbox{$\sum$}}\limits}
\newcommand{\ffrac}[2]{\hbox{\ft $\displaystyle\frac{#1}{#2}$}}
\newcommand{\Norm}{{\bf N}}
\newcommand{\Trace}{{\bf T}}
\newcommand{\BB}{{\bf B}}
\newcommand{\Bb}{{\bf b}}
\newcommand{\BD}{{\bf \Delta}}
\def\plus{\displaystyle\mathop{\raise 2.0pt \hbox{$\bigoplus $}}\limits}
\def\prd{\displaystyle\mathop{\raise 2.0pt \hbox{$\prod$}}\limits}
\def\sm{\displaystyle\mathop{\raise 2.0pt \hbox{$\sum$}}\limits}
\newcommand{\pow}{{\rm pow}}
\newcommand{\Sgn}{{\bf S}}
\newcommand{\ZK}{{\bf Z}_K}
\newcommand{\fop}{{F.O.P.\,}}
\author[Georges Gras]{Georges Gras}
\address{Villa la Gardette -- 4, chemin de Ch\^ateau Gagni\`ere --
38520 Le Bourg d'Oisans, France -- Url: {\rm \url{http://orcid.org/0000-0002-1318-4414}}}
\email{g.mn.gras@wanadoo.fr}
\keywords{Real quadratic fields; Fundamental units; Norm equations; $p$-rationality; 
$p$-class numbers; PARI programs}
\subjclass{Primary  11R11, 11R27, 11R37}
\begin{document}

\title[Unlimited lists of quadratic integers of given norm]
{Unlimited lists of quadratic integers of given norm \\ 
Application to some arithmetic properties }

\date{May 24, 2023}

\begin{abstract}  We use the polynomials $m_s(t) = t^2 - 4 s$, $s \in \{-1, 1\}$, in an 
elementary process giving unlimited lists of {\it fundamental units of norm $s$},
of real quadratic fields, with ascending order of the discriminants. 
As $t$ grows from $1$ to an upper bound $\BB$, for each {\it first occurrence} 
of a square-free integer $M \geq 2$, in the factorization $m_s(t) =: M r^2$, the
unit $\frac{1}{2} \big(t + r \sqrt{M}\big)$ is the fundamental unit of norm $s$
of $\Q(\sqrt M)$, even if $r >1$ (Theorem \ref{fondunit}).
Using $m_{s\nu}(t) = t^2 - 4 s \nu$, $\nu \geq 2$, the algorithm gives unlimited 
lists of {\it fundamental integers of norm $s\nu$} (Theorem~\ref{unicity}). 
We deduce, for any prime $p>2$, unlimited lists of {\it non $p$-rational} 
quadratic fields (Theorems \ref{prational}, \ref{heuristic}, \ref{infty}) and lists 
of degree $p-1$ imaginary fields with non-trivial $p$-class group (Theorems 
\ref{cubic}, \ref{quintic}). All PARI programs are given. 
\end{abstract}

\maketitle

\tableofcontents

\newpage
\section{Introduction and main results} 

\subsection{Definition of the ``\texorpdfstring{\fop}{Lg}'' algorithm}\label{process}
For the convenience of the reader, we give, at once, an outline of this process which
has an interest especially under the use of PARI programs~\cite{P}. 

\begin{definition}
We call ``{\it First Occurrence Process}'' (\fop) the following algorithm, defined 
on a large interval $[1, \BB]$ of integers.
As $t$ grows from $t=1$ up to $t=\BB$, we compute some arithmetic invariant $F(t)$;
for instance, a pair of invariants described as a PARI list, as the following
illustration with square-free integers $M(t)$ and units $\eta(t)$ of $\Q(\sqrt {M(t)})$:
\[{\sf F(t) \mapsto L(t)= List \big(\big[M(t) ,\  \eta(t) \big]},\]
provided with a natural order on the pairs $L(t)$, then put it in a PARI list ${\sf LM}$:
\begin{equation*}
\begin{aligned}
{\sf Listput(LM,vector(2,c,L[c]))} \mapsto & {\sf List([L(1),L(2),\ldots,L(t), \ldots, L(\BB)])} \\
= & {\sf List([\, [M(1),\eta(1)], \ldots, [M(t),\eta(t)] , \ldots, [M(\BB),\eta(\BB)]\, ])} ;
\end{aligned}
\end{equation*}
after that, we apply the PARI instruction ${\sf VM=vecsort(LM,1 ,8)}$ which builds the list:
\[{\sf VM=List([L_1,L_2,\ldots, L_j, \ldots, L_N]),\ \, {\sf N \leq \BB},}\] 
such that ${\sf L_j = L(t_j) = [\,[M(t_j),\eta(t_j)]\,]}$ is {\it the first occurrence} (regarding the 
selected order, for instance that on the ${\sf M}$'s) of the invariant found by the 
algorithm and which removes the subsequent duplicate entries. 
\end{definition}

Removing the duplicate entries is the key of the principle since in general they are 
unbounded in number as $\BB \to \infty$ and do not give the suitable information

\smallskip
Since the length ${\sf N}$ of the list ${\sf VM}$ is unknown by nature, 
one must write ${\sf LM}$ as a vector and put instead:
${\sf VM=vecsort(vector(\BB,c,LM[c]),1,8)}$; thus, ${\sf N=\#VM}$ makes sense and one 
can (for possible testing) select elements and components as ${\sf X=VM[k][2]}$, etc. 
If ${\sf N}$ is not needed, then ${\sf VM=vecsort(LM,1 ,8)}$ works well.

\smallskip
For instance, the list ${\sf LM}$ 
of objects $F(t) = (M(t), \varepsilon(t))$, $1 \leq t \leq \BB=10$:
\[{\sf LM=List([\, [5,\varepsilon_5^{}],[2,\varepsilon_2^{}],[5,\varepsilon'_5],
[7,\varepsilon_7^{}],[5,\varepsilon''_5],[3,\varepsilon_3^{}],[2,\varepsilon'_2],
[5,\varepsilon'''_5],[6,\varepsilon_6^{}],[7,\varepsilon'_7]\, ])}, \] 
with the natural order on the first components $M$, leads to the list:
\[{\sf VM=List([[2,\varepsilon_2^{}],[3,\varepsilon_3^{}],[5,\varepsilon_5^{}],
[6,\varepsilon_6^{}],[7,\varepsilon_7^{}]])}. \]

\subsection{Quadratic integers}
Let $K =: \Q(\sqrt M)$, $M \in \Z_{\geq 2}$ square-free, be a real quadratic field 
and let $\ZK$ be its ring of integers.

\smallskip
Recall that $M \geq 2$, square-free, is called the ``Kummer radical'' of 
$K$, contrary to any ``radical'' $m=M r^2$ giving the same field $K$.

\smallskip
There are two ways of writing for an element $\alpha \in \ZK$. The first one is to
use the integral basis $\{1, \sqrt M\}$ (resp. $\big\{1, \frac{1+\sqrt M}{2} \big\}$) 
when $M \not \equiv 1 \pmod 4$ (resp. $M \equiv 1 \pmod 4$). The second one
is to write $\alpha = \frac{1}{2}(u+v \sqrt{M})$, in which case $u, v \in \Z$ 
are necessarily of same parity; but $u, v$ may be odd only when
$M \equiv 1 \pmod 4$. 
We denote by $\Trace_{K/\Q}$ and $\Norm_{K/\Q}$, or simply $\Trace$
and $\Norm$, the trace and norm maps in $K/\Q$, so that 
$\Trace(\alpha) = u$ and $\Norm(\alpha) = \frac{1}{4}(u^2 - M v^2)$
in the second writing for $\alpha$. 

\smallskip
Then the norm equation in $u, v \in \Z$ (not necessarily with co-prime numbers $u$, $v$):
\[u^2 - M v^2 = 4s \nu,\ \  s\in \{-1,1\}, \ \nu \in \Z_{\geq 1}, \]
for $M$ square-free, has the property that $u, v$ are necessarily of same parity
and may be odd only when $M \equiv 1 \pmod 4$; then:
\[z := \ffrac{1}{2} \big(u+v \sqrt{M}\big) \in \ZK,\ \ \Trace(z) = u\ \ \&\ \ \Norm (z) = s\nu. \]

Finally, we will write quadratic integers $\alpha$, with positive coefficients on the 
basis $\{1, \sqrt M\}$; this defines a unique representative modulo 
the sign and the conjugation. Put:
\[\ZK^+ := \Big\{ \alpha = \ffrac{1}{2} \big(u+v \sqrt{M}\big),\ \ u, v \in \Z_{\geq 1},
\  u \equiv v \!\!\pmod 2\Big\} .\]

Note that these $\alpha$'s are not in $\Z$, nor in $\Z\! \cdot\! \sqrt M$; indeed, we have the
trivial solutions $\Norm(q) = q^2$ ($\alpha = q \in \Z_{\geq 1}$, $s=1$, $\nu = q^2$,
$M v^2=0$, $u=q$) or $\Norm(v \sqrt M)=-M v^2$, $v \in \Z_{\geq 1}$, $s=-1$, 
$\nu = -M v^2$, $u=0$), which are not given by the \fop algorithm for simplicity. 
These viewpoints will be more convenient for our 
purpose and these conventions will be implicit in all the sequel.

\smallskip
Since norm equations may have several solutions, we will use the following definition:

\begin{definition}\label{defnu}
Let $M \in \Z_{\geq 2}$ be a square-free integer and let $s \in \{-1,1\}$, $\nu \in \Z_{\geq 1}$.
We call fundamental solution (if there are any) of the norm equation $u^2-M v^2 = 4 s\nu$, 
with $u ,v  \in \Z_{\geq 1}$, the corresponding integer $\alpha := \frac{1}{2} (u + v \sqrt M) 
\in \ZK^+$ of minimal trace $u$.
\end{definition}

\subsection{Quadratic polynomial units}
It is classical that the continued fraction expansion of $\sqrt m$, for a positive 
square-free integer $m$, gives, under some limitations, the fundamental solution, 
in integers $u, v \in \Z_{\geq 1}$, of the norm equation $u^2-m v^2=4s$, whence the 
fundamental unit $\varepsilon_m^{}:=\frac{1}{2}(u+v \sqrt{m})$ of $\Q(\sqrt m)$.
A similar context of ``polynomial continued fraction expansion'' does 
exist and gives polynomial solutions $(u(t), v(t))$, of $u(t)^2-m(t) v(t)^2 = 4s$, 
for suitable $m(t) \in \Z[t]$ (see, e.g., \cite{McL,McLZ,Nat,Ram,SaAb}). 
This gives the quadratic polynomial units $E(t) := \frac{1}{2} \big (u(t) + v(t)\sqrt{m(t)} \big)$.

\smallskip
We will base our study on the following polynomials $m(t)$ that have interesting 
universal properties (a first use of this is due to Yokoi \cite[Theorem 1]{Yo}). 

\begin{definition}\label{msdelta}
Consider the square-free polynomials $m_{s\nu}(t) = t^2 - 4 s \nu \in \Z[t]$, 
where $s \in \{-1, 1\}$, $\nu \in \Z_{\geq 1}$. The 
continued fraction expansion of $\sqrt{t^2 - 4 s \nu}$ leads to the integers $A_{s\nu}(t) := 
\frac{1}{2} \big(t+\sqrt{t^2 - 4 s \nu}\big)$, of norm $s \nu$ and trace $t$, in 
a quadratic extension of $\Q(t)$. When $\nu = 1$, one obtains the units 
$E_s(t) := \frac{1}{2} \big(t+\sqrt{t^2 - 4 s}\big)$, of norm $s$ and trace $t$. 
\end{definition}

The continued fraction expansion, with polynomials, gives the fundamental solution 
of the norm equation (cf. details in \cite{McL}), but must not be confused with that 
using evaluations of the polynomials; for instance, for $t_0=7$, $m_1(t_0) = 7^2-4 = 45$ 
is not square-free and $E_1(7) = \frac{1}{2} (7+\sqrt{45}) = \frac{1}{2} (7+3 \sqrt 5)$ is 
indeed the fundamental solution of $u^2-45v^2 = 4$, but not the fundamental unit 
$\varepsilon_5^{}$ of $\Q(\sqrt {45}) = \Q(\sqrt {5})$, since one gets $E_1(7) = 
\varepsilon_5^6$.

\subsection{Main algorithmic results}
We will prove that the families of polynomials $m_{s\nu}(t) = t^2 - 4 s\nu$,
$s\in \{-1,1\}$, $\nu \in \Z_{\geq 1}$, are universal to find all square-free 
integers $M$ for which there exists a privileged solution $\alpha \in \ZK^+$ 
to $\Norm(\alpha) = s \nu$;
moreover, the solution obtained is the fundamental one, in the meaning of 
Definition \ref{defnu} saying that $\alpha$ is of minimal trace $t \geq 1$. 
This is obtained by means of an extremely simple algorithmic process 
(described \S\,\ref{process}) and allows to get unbounded lists of quadratic fields,
given by means of their Kummer radical, and having specific properties.

\smallskip
The typical results, admitting several variations, are given by the following excerpt
of statements using quadratic polynomial expressions $m(t)$ deduced from some 
$m_{s\nu}(t)$:

\begin{theorem}
Let $\BB$ be an arbitrary large upper bound.
As the integer $t$ grows from $1$ up to $\BB$, {\it for each first occurrence} of a  
square-free integer $M \geq 2$, in the factorizations $m(t) =: M r^2$, 
we have the following properties for $K := \Q(\sqrt M)$:

\medskip
{\bf a)} Consider the polynomials $m(t) = t^2 - 4s \nu$, $s \in \{-1,1\}$, $\nu \in \Z_{\geq 2}$:

\medskip
\quad (i) {$m(t) = t^2 - 4s$}. 

\smallskip\noindent
The unit $\frac{1}{2}(t + r \sqrt M)$ is the 
fundamental unit of norm $s$ of $K$ (Theorem \ref{fondunit}).

\smallskip
\quad (ii) $m(t) = t^2 - 4 s \nu$.

\smallskip\noindent
The integer $A_{s\nu}(t) = \frac{1}{2}(t + r \sqrt M)$ is the fundamental integer 
in $\ZK^+$ of norm $s\nu$ in the meaning of Definition \ref{defnu} (Theorem \ref{unicity}).

\medskip
{\bf b)} Let $p$ be an odd prime number and consider the following
polynomials.

\medskip
\quad (i) {$m(t) \in \big\{$$p^4 t^2 \pm 1$, $p^4 t^2 \pm 2$, $p^4 t^2 \pm 4$, 
$4p^4 t^2 \pm 2$, $9 p^4 t^2 \pm 6$, $9 p^4 t^2 \pm 12,\,\ldots$$\big\}$}.

\smallskip\noindent
The field $K$ is non $p$-rational apart from few explicit 
cases (Theorem \ref{prational}).

\smallskip
\quad (ii) {$m(t) = 3^4 t^2 - 4s$}. 

\smallskip\noindent
The field $F_{3,M} := \Q(\sqrt {-3 M})$ has a class number divisible by $3$, except 
possibly when the unit $\frac{1}{2} \big( 9 t + r \sqrt M \big)$ is a third power 
of a unit (Theorem \ref{cubic}).
Up to $\BB=10^5$, all the $3$-class groups are non-trivial, apart from few 
explicit cases.

\smallskip
\quad (iii) {$m(t) = p^4 t^2 - 4s$, $p \geq 5$}. 

\smallskip\noindent
The imaginary cyclic extension 
$F_{p,M} := \Q\big ((\zeta_p-\zeta_p^{-1}) \sqrt M \big)$, of degree $p-1$, has a class number 
divisible by $p$, except possibly when the unit $\frac{1}{2} \big(p^2 t + r \sqrt {M})\big)$ 
is a $p$-th power of unit (Theorem \ref{quintic}). 

\smallskip\noindent
For $p=5$, the quartic cyclic field $F_{5,M}$ is 
defined by the polynomial $P=x^4+5 M x^2+5 M^2$ and up to $\BB=500$, all the 
$5$-class groups are non-trivial, except for $M=29$.
\end{theorem}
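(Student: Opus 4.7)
The plan is to treat each sub-statement by the same unifying device: the \fop forces the selected $t$ to be minimal, so that the object $\alpha := \frac{1}{2}(t + r\sqrt M)$ attached to the first occurrence of $M$ is automatically a \emph{minimal} (i.e.\ fundamental) element of its kind; then the conclusions of (b) follow from the fact that in every family listed the trace of $\alpha$ is divisible by $p^2$.

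For (a)(i) I would argue by contradiction: from $t^2 - Mr^2 = 4s$, $\eta := \frac{1}{2}(t+r\sqrt M)$ is a unit of $K := \Q(\sqrt M)$ of norm $s$; if $\eta = \varepsilon^k$ for some unit $\varepsilon = \frac{1}{2}(t'+r'\sqrt M)$ of norm $s$ with $k \geq 2$ (with $k$ odd when $s=-1$, since a norm-$(+1)$ unit cannot have a norm-$(-1)$ power), then $t'<t$ and $m_s(t') = M r'^2$ gives an earlier occurrence of $M$, contradicting the \fop selection; hence $\eta$ is fundamental. Part (a)(ii) is the analogous argument in $\ZK^+$: a smaller trace $t'$ with $\Norm\bigl(\frac{1}{2}(t'+r'\sqrt M)\bigr) = s\nu$ would yield $m_{s\nu}(t') = M r'^2$, again contradicting first occurrence.

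For (b)(i), part (a)(i) identifies the fundamental unit of $K$ as $\eta = \frac{1}{2}(t_0+r\sqrt M)$ with $p^2 \mid t_0$, so $\eta \equiv \tfrac{r}{2}\sqrt M \pmod{p^2\ZK}$, a strong local $p$-th-power congruence at the primes of $K$ above $p$. The standard criterion for $p$-rationality of real quadratic fields (triviality of the torsion of $\Gal$ of the maximal abelian pro-$p$ extension unramified outside $p$) then fails, so $K$ is non $p$-rational unless $\eta$ happens to be a global $p$-th power of a unit, a sporadic situation to be checked case by case. For (b)(ii)--(b)(iii), the same local property lets me construct the Kummer extension $F_{p,M}(\eta^{1/p})/F_{p,M}$ as an everywhere unramified $p$-cyclic extension; by class field theory, $\Z/p\Z$ embeds into the $p$-class group of $F_{p,M}$ as long as $\eta$ is not a global $p$-th power of a unit. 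For $p=3$ this specialises to the Scholz reflection between $\Q(\sqrt M)$ and $\Q(\sqrt{-3M})$; for $p \geq 5$ the same mechanism is carried out inside the CM field generated by $(\zeta_p-\zeta_p^{-1})\sqrt M$.

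The hard step, I expect, is in (b): verifying that the congruence $p^2 \mid \tr(\eta)$ alone is enough to make $\eta$ a \emph{local} $p$-th power at every prime above $p$, given that $p$ already ramifies in $\Q(\zeta_p)/\Q$ and one has to keep careful track of the exact ramification indices. Once this is in place the class-number obstruction (respectively the non $p$-rationality in (b)(i)) is automatic, and the residual exceptional $M$'s are identified by the PARI search up to the stated bounds $\BB = 10^5$ (for $p=3$) and $\BB = 500$ (for $p=5$).
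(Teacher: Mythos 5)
Your part (a) is correct and is essentially the paper's own argument: the minimal-trace contradiction is exactly the proof of Theorem \ref{fondunit} (and of the \fop half of Theorem \ref{unicity}), your comparison $\Trace(\varepsilon) < \Trace(\varepsilon^k)$ being the easy monotone-function special case ($x \mapsto x + s/x$ increasing for $x>1$) of what the paper isolates, in finer form, as Theorem \ref{thmtrace}. You do omit the existence and uniqueness discussion of Theorem \ref{unicity} (two generators of the same ideal in $\ZK^+$ with equal trace coincide), but that is peripheral to the summary statement.

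Part (b), however, contains a genuine error. You assert that (a)(i) identifies $\eta = \frac{1}{2}(t_0 + r\sqrt M)$ as \emph{the fundamental unit} of $K$ in the listed families. This is false: in case (b) the trace is confined to a sub-progression (e.g. $T = p^2 t$ or $T = t_0 + p^2 t$), so the first occurrence of $M$ \emph{within the family} need not be the global first occurrence over all $t$, and the unit found is in general only some power $\varepsilon_M^n$. This is not a technicality, because the exceptional cases of the statement are exactly those with $p \mid n$: for $p=5$, $s=-1$, the unique exception $M=29$ arises from $E_{-1}(3775) = \varepsilon_{29}^{10}$, and the $p=3$ data show exponents $n = 3, 9, 15,\ldots$ precisely at the exceptional radicals. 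Under your reading, $\eta$ would be fundamental, hence never a global $p$-th power for odd $p$, and the theorem would have \emph{no} exceptions at all --- contradicting both the statement and the computations. The paper's actual chain is: the parametrized unit $E$ is a local $p$-th power at $p$ by construction; if $E = \varepsilon_M^n$ with $p \nmid n$, then $\varepsilon_M^{}$ is itself a local $p$-th power, which forces ${\mathcal R}_K \equiv 0 \bmod p$ (non-$p$-rationality, Theorem \ref{prational}), resp. $p$-primarity and reflection in (b)(ii)--(iii); the exceptions are exactly $E \in \langle \varepsilon_M^p \rangle$, bounded in size by Theorem \ref{heuristic}.

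Your unifying premise --- ``in every family listed the trace of $\alpha$ is divisible by $p^2$'' --- is also false for several of the listed families: for $m(t) = p^4t^2 - 2s$ the unit is $p^4t^2 - s + p^2 t\sqrt{p^4t^2 - 2s}$, of trace $2(p^4t^2 - s) \equiv -2s \pmod {p^2}$, and similarly for $4p^4t^2 \pm 2$, $9p^4t^2 \pm 6$, $9p^4t^2 \pm 12$ and for the $t_0 \neq 0$ sub-case of Theorem \ref{prational}\,(b). What is uniformly true (the paper's criterion) is that a suitable prime-to-$p$ power of $E$ is $\equiv 1 \pmod{p^2}$: in case (a) of Theorem \ref{prational} one has directly $E \equiv -s \pmod{p^2}$; where $p^2 \mid \Trace(E)$ the norm relation $M r^2 = T^2 - 4s \equiv -4s \pmod{p^4}$ gives $E^2 \equiv -s \pmod{p^2}$; and for $t_0^2 \equiv 2s \pmod{p^2}$ one gets $E^4 \equiv -1 \pmod{p^2}$. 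Your self-identified ``hard step'' is then the structure of principal units of $K_{\mathfrak p}$, ${\mathfrak p} \mid p$ (a unit $\equiv 1 \bmod p^2$ is a local $p$-th power for $p$ odd, including the ramified case); it is never carried out in your sketch, and it has nothing to do with ramification in $\Q(\zeta_p)/\Q$. Finally, the Kummer step in (b)(ii)--(iii) is misstated: $\zeta_p \notin F_{p,M}$ and $\eta \notin F_{p,M}$, so $F_{p,M}(\eta^{1/p})/F_{p,M}$ is not a Kummer $p$-cyclic extension; the unramified extension lives over $L = K(\zeta_p)$, and the divisibility of the class number of the mirror field $F_{p,M}$ is obtained by reflection (Scholz for $p=3$, the $\omega\chi^{-1}$-component in general), as in Theorems \ref{cubic} and \ref{quintic}.
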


\smallskip
Moreover, this principle gives lists of solutions by means of Kummer radicals
(or discriminants) of a regularly increasing order of magnitude, these lists being
unbounded as $\BB \to \infty$. See, for instance Proposition \ref{existence} for lists 
of Kummer radicals $M$, then Section \ref{listes} for lists of arithmetic invariants 
(class groups, $p$-ramified torsion groups, logarithmic class groups of $K$),
and Theorems \ref{heuristic}, \ref{infty}, giving unlimited lists of units, local (but non 
global) $p$th powers, whence lists of non-$p$-rational quadratic fields. 

\smallskip
All the lists have, at least, $O(\BB)$ distinct elements, but most often 
$\BB-o(\BB)$, and even $\BB$ distinct elements in some situations.

\medskip
So, we intend to analyze these results in a computational point of view by means of 
a new strategy to obtain arbitrary large list of fundamental units, or of other quadratic 
integers, {\it even when radicals $m_{s\nu}(t) =: M(t) r(t)^2$, $t \in \Z_{\geq 1}$, 
are not square-free (i.e., $r(t) > 1$)}. By comparison, it is well known that many polynomials, in 
the literature, give subfamilies of integers (especially fundamental units) found by 
means of the $m_{s\nu}$'s with assuming that the radical $m_{s\nu}(t)$ are square-free.

\begin{remark}\label{MB}
{\rm It is accepted and often proven that the integers $t^2 - 4 s \nu$ are 
square-free with a non-zero density and an uniform repartition 
(see, e.g., \cite{FrIw}, \cite{Rud}); so an easy 
heuristic is that the last $M = M_\BB$ of the list ${\sf VM}$ is equivalent to $\BB^2$. 
This generalizes to the \fop algorithm applied to polynomials of the form 
$a_n t^n+a_{n-1}t^{n-1} + \cdots + a_0$, $n \geq 1$, $a_n \in \Z_{\geq 1}$, and 
gives the equivalent $M_\BB \sim a_n \BB^n$ as $\BB \to \infty$.

\smallskip
The main fact is that the \fop algorithm will give {\it fundamental solutions}
of norm equations $u^2 - M v^2 = 4 s \nu$ (see Section \ref{S4}), whatever the order of 
magnitude of $r$; for 
small values of $M$, $r$ may be large, even if $r(t)$ tends to $1$ as $M(t)$ tends to its 
maximal value, equivalent to $\BB^2$, as $t \to \infty$. Otherwise, without the \fop principle, 
one must assume $m_{s \nu}(t)$ square-free in the applications, as it is often done in the literature.}
\end{remark}

\section{First examples of application of the \texorpdfstring{\fop}{Lg} algorithm} 
\label{listes}

\subsection{Kummer radicals and discriminants given by \texorpdfstring{$m_s(t)$}{Lg}}

Recall that, for $t \in \Z_{\geq 1}$, we put $m_s(t) = M(t) r(t)^2$, $M(t)$ square-free.

\subsubsection{Kummer radicals}
The following program gives, as $t$ grows from $1$ up to $\BB$, the Kummer radical
$M$ and the integer $r$ obtained from the factorizations of $m'_1(t) = t^2 - 1$, under 
the form $M r^2$;  then we put them in a list ${\sf LM}$ and the \fop algorithm gives the 
pairs ${\sf C=core(mt,1)=[M, r]}$, in the increasing order of the radicals $M$ and removes 
the duplicate entries:

\ft\begin{verbatim}
MAIN PROGRAM GIVING KUMMER RADICALS
{B=10^6;LM=List;for(t=1,B,mt=t^2-1;C=core(mt,1);L=List(C);
listput(LM,vector(2,c,L[c])));M=vecsort(vector(B,c,LM[c]),1,8);
print(M);print("#M = ",#M)}
[M,r]=
[0,1],
[2,2],[3,1],[5,4],[6,2],[7,3],[10,6],[11,3],[13,180],[14,4],[15,1],[17,8],[19,39],[21,12],
[22,42],[23,5],[26,10],[29,1820],[30,2],[31,273],[33,4],[34,6],[35,1],[37,12],[38,6],
[39,4],[41,320],[42,2],[43,531],[46,3588],[47,7],[51,7],[53,9100],[55,12],[57,20],
[58,2574],[59,69],[62,8],[65,16],[66,8],[67,5967],[69,936],[70,30],[71,413],[74,430],
(...)
[999980000099,1],[999984000063,1],[999988000035,1],[999992000015,1]
#M = 999225
\end{verbatim}\ns

\begin{remark}{\rm 
Some radicals are not found. Of course they will appear for $\BB$ larger according 
to Proposition \ref{existence}.
For instance, the Kummer radical $M=94$ depends on the fundamental unit $\varepsilon_{94} =
2143295 + 221064 \sqrt {94}$ of norm~$1$; so, using $m'_1(t)$, 
the minimal solution is $t=2143295$. For the Kummer radical $M=193$, $\varepsilon_{193} =
1764132+126985 \sqrt{193}$ is of norm $-1$ and $m'_{-1}(1764132)
= 193 \times 126985^2$. 
So $t^2-1 = 193 r^2$ has the minimal solution 
$t=6224323426849$ corresponding to $\varepsilon_{193}^2$.}
\end{remark}

\subsubsection{Discriminants}
If one needs the discriminants of the quadratic fields in the ascending order, 
it suffices to replace the Kummer radical ${\sf M=core(mt)}$ by ${\sf quaddisc(core(mt))}$ 
giving the discriminant $D$ of $\Q(\sqrt M)$. We use $m'_1(t)$ and $m'_{-1}(t)$
together to get various $M$ modulo $4$ (thus the size of the list ${\sf [D]}$ is ${\sf 2*\BB}$); 
this yields the following program and results with outputs ${\sf [D]}$:

\ft\begin{verbatim}
MAIN PROGRAM GIVING DISCRIMINANTS
{B=10^6;LD=List;for(t=1,B,L=List([quaddisc(core(t^2-1))]);
listput(LD,vector(1,c,L[c]));L=List([quaddisc(core(t^2+1))]);
listput(LD,vector(1,c,L[c])));D=vecsort(vector(2*B,c,LD[c]),1,8);
print(D);print("#D = ",#D)}
[D]=
[[0],
[5],[8],[12],[13],[17],[21],[24],[28],[29],[33],[37],[40],[41],[44],[53],[56],[57],[60],
[61],[65],[69],[73],[76],[77],[85],[88],[89],[92],[93],[97],[101],[104],[105],[113],[120],
[124],[129],[136],[137],[140],[141],[145],[149],[152],[156],[161],[165],[168],[172],
[173],[177],[184],[185],[188],[197],[201],[204],[205],[209],[213],[220],[221],[229],
(...)
[3999960000104],[3999968000060],[3999976000040],[3999992000008]
#D = 1998451
\end{verbatim}\ns

This possibility is valid for all programs of the paper; we will classify the Kummer radicals, 
instead of discriminants, because radicals are more related to norm equations, but any 
kind of output can be done easily.

\subsection{Application to minimal class numbers}
One may use this classification of Kummer radicals and compute orders 
$h$ of some invariants, then apply the \fop principle, with the instruction 
${\sf VM=vecsort(vector(B,c,LM[c]),2,8)}$ to the outputs ${\sf [M,h]}$,
to get successive possible class numbers $h$ in ascending order 
(we use here $m_1(t)=t^2-4$):

\ft\begin{verbatim}
MAIN PROGRAM GIVING SUCCESSIVE CLASS NUMBERS
{B=10^5;LM=List;for(t=3,B,M=core(t^2-4);
h=quadclassunit(quaddisc(M))[1];L=List([M,h]);
listput(LM,vector(2,c,L[c])));
VM=vecsort(vector(B-2,c,LM[c]),2,8);
print(VM);print("#VM = ",#VM)}
[M,h]=
[5,1],[15,2],[2021,3],[195,4],[4757,5],[3021,6],[11021,7],[399,8],[27221,9],[7221,10],
[95477,11],[1599,12],[145157,13],[15621,14],[50621,15],[4899,16],[267101,17],[11663,18],
(...)
[2427532899,7296],[2448270399,7356],[2340624399,7384],[1592808099,7424],[1745568399,7456],
[2443324899,7600],[2479044099,7680],[2251502499,7840],[1718102499,7968],[2381439999,8040],
[2077536399,8328],[1981140099,8384]
#VM = 2712
\end{verbatim}\ns 

One may compare using polynomials $m_s(t)$ to obtain radicals, then for instance
class numbers $h$, with the classical PARI computation:

\ft\begin{verbatim}
{B=10^6;LM=List;N=0;for(M=2,B,if(core(M)!=M,next);
N=N+1;h=quadclassunit(quaddisc(M))[1];
L=List([M,h]);listput(LM,vector(2,c,L[c])));
VM=vecsort(vector(N,c,LM[c]),2,8);
print(VM);print("#VM = ",#VM)}
[M,h]=
[[2,1],[10,2],[79,3],[82,4],[401,5],[235,6],[577,7],[226,8],[1129,9],[1111,10],[1297,11],
[730,12],[4759,13],[1534,14],[9871,15],[2305,16],[7054,17],[4954,18],[15409,19],
(...)
[78745,60],[68179,62],[57601,63],[71290,64],[87271,66],[53362,68],[56011,70],[45511,72],
[38026,74],[93619,76],[94546,80],[77779,84],[90001,87],[56170,88],[99226,94],[50626,96]]
#VM = 73
\end{verbatim}\ns 

\noindent
The lists are not comparable but are equal for ``$\BB = \infty$.''

\subsection{Application to minimal orders of \texorpdfstring{$p$}{Lg}-ramified torsion groups}
Let ${\mathfrak T}_K$ be the torsion group  of the Galois group of the maximal abelian 
$p$-ramified (i.e., unramified outside $p$ and $\infty$) pro-$p$-extension of $\Q(\sqrt M)$.
The following program, for any $p \geq 3$, gives the results by ascending 
order (outputs ${\sf [M,h=p^a,T=p^b]}$, where ${\sf h}$ is the order of the 
$p$-class group and ${\sf T}$ that of ${\mathfrak T}_K$):

\smallskip
\ft\begin{verbatim}
MAIN PROGRAM GIVING SUCCESSIVE ORDERS OF p-TORSION GROUPS 
{B=10^5;p=3;e=18;LM=List;for(t=2,B,M=core(t^2-1);
K=bnfinit(x^2-M,1);wh=valuation(K.no,p);Kt=bnrinit(K,p^e);
CKt=Kt.cyc;wt=valuation(Kt.no/CKt[1],p);L=List([M,p^wh,p^wt]);
listput(LM,vector(3,c,L[c])));VM=vecsort(vector(B-1,c,LM[c]),3,8);
print(VM);print("#VM = ",#VM)}
[M,#h_p,#T_p]=
[[3,1,1],[15,1,3],[42,1,9],[105,1,27],[1599,3,81],[1095,1,243],[23066,9,729],
[1196835,3,2187],[298662,9,6561],[12629139,27,19683],[6052830,9,59049],
[747366243,243,177147]]
#VM = 12
\end{verbatim}\ns

\subsection{Application to minimal orders of logarithmic class groups}
For the definition of the logarithmic class group $\widetilde {\mathfrak T}_p$ governing 
Greenberg's conjecture \cite{Gre1}, see \cite{Jau3,Jau4}, and for its computation, see 
\cite{BeJa} which gives the structure as abelian group. 
The following program, for $p=3$, gives the results by ascending orders
(all the structures are cyclic in this interval):

\smallskip
\ft\begin{verbatim}
MAIN PROGRAM GIVING SUCCESSIVE CLASSLOG NUMBERS
{B=10^5;LM=List;for(t=3,B,M1=core(t^2-4);M2=core(t^2+4);
K1=bnfinit(x^2-M1);Clog= bnflog(K1,3)[1];C=1;for(j=1,#Clog,
C=C*Clog[j]);L=List([M1,Clog,C]);listput(LM,vector(3,c,L[c]));
K2=bnfinit(x^2-M2);Clog= bnflog(K2,3)[1];C=1;for(j=1,#Clog,
C=C*Clog[j]);L=List([M2,Clog,C]);listput(LM,vector(3,c,L[c])));
VM=vecsort(vector(2*(B-2),c,LM[c]),3,8);
print(VM);print("#VM = ",#VM)}
[M,Clog,#Clog]=
[[5,[],1],[257,[3],3],[2917,[9],9],[26245,[27],27],[577601,[81],81],[236197,[243],243],
[19131877,[729],729],[172186885,[2187],2187],[1549681957,[6561],6561]]
#VM = 9
\end{verbatim}\ns

\section{Units \texorpdfstring{$E_s(t)$}{Lg} vs fundamental units 
\texorpdfstring{$\varepsilon_{M(t)}^{}$}{Lg}}\label{3}

\subsection{Polynomials \texorpdfstring{$m_s(t) = t^2 -4 s$}{Lg} and 
units \texorpdfstring{$E_s(t)$}{Lg}}\label{discussion}
This subsection deals with the case $\nu = 1$ about the search of quadratic units
(see also \cite[Theorem 1]{Yo}).
The polynomials $m_s(t) \in \Z[t]$ define, for $t \in \Z_{\geq 1}$, the parametrized 
units $E_s(t) = \frac{1}{2}(t+\sqrt{t^2 -4 s})$ of norm $s$ in $K := \Q(\sqrt M)$, 
where $M$ is the maximal square-free divisor of $t^2 - 4 s$. 
But $M$ is unpredictable and gives rise to the following 
discussion depending on the norm $\Sgn := \Norm(\varepsilon_M^{})$ 
of the fundamental unit $\varepsilon_M^{} =:  \frac{1}{2}(a+ b \sqrt M)$ of $K$ 
and of the integral basis of $\ZK$:

\smallskip
(i) If $s=1$, $E_1(t) = \frac{1}{2} (t+\sqrt{t^2 - 4})$ is of norm $1$; so, if $\Sgn=1$,
then $E_1(t) \in \langle \varepsilon_M^{} \rangle$,
but if $\Sgn=-1$, necessarily $E_1(t) \in \langle \varepsilon_M^2 \rangle$.

\smallskip
If $s=-1$, $E_{-1}(t) = \frac{1}{2} (t+\sqrt{t^2 + 4})$ is of norm $-1$; so, necessarily 
the Kummer radical $M$ is such that $\Sgn=-1$. 

\smallskip
(ii) If  $t$ is odd, $E_s(t)$ is written with half-integer coefficients,
$t^2-4s \equiv 1 \pmod 4$, giving $M  \equiv 1 \pmod 4$
and $\ZK = \Z\big [\frac{1+\sqrt M}{2} \big]$; so $\varepsilon_M^{}$ can not
be with integer coefficients ($a$ and $b$ are necessarily odd). 

\smallskip
If $t$ is even, $M$ may be arbitrary as well as $\varepsilon_M^{}$.

\smallskip
We can summarize these constraints by means of the following Table:
\begin{equation}\label{array}
\begin{tabular}{|l|l|l|l|c|c|c}
\hline
 \ft$t^2-4s$\ns &  
\ft$\Sgn = \Norm(\varepsilon_M^{})$\ns  & \ft$E_s(t) \in$\ns  &  
\ft$\varepsilon_M^{}=\frac{1}{2}(a+ b \sqrt M)$\ns  \\  
\hline
\ft$t^2-4$,\ \  {$t$ even} \ns& \ft$\ \ 1$\ (resp. $-1$)  & 
\ft$\langle \varepsilon_M^{} \rangle$ \ (resp. $\langle \varepsilon_M^2 \rangle$)\ns & 
\ft{$a, b$ odd or even}\ns  \\  
\hline 
\ft$t^2-4$,\ \   {$t$ odd} \ns & \ft$\ \ 1$\ (resp. $-1$) & 
\ft$\langle \varepsilon_M^{} \rangle$ \ (resp. $\langle \varepsilon_M^2 \rangle$)\ns & 
\ft{$a, b$ odd}\ns  \\  
\hline 
\ft$t^2+4$,\ \   {$t$ even}\ns & $-1$& 
\ft$\langle \varepsilon_M^{} \rangle$\ns & \ft{$a, b$ odd or even}\ns \\  
\hline 
\ft$t^2+4$,\ \   {$t$ odd}\ns & $-1$& 
\ft$\langle \varepsilon_M^{} \rangle$\ns & \ft{$a, b$ odd}\ns \\ 
\hline 
\end{tabular}
\end{equation}

\medskip
Recall that the \fop algorithm consists, after choosing the upper bound $\BB$, 
in establishing the list of {\it first occurrences}, as $t$ increases from $1$
up to $\BB$, of any square-free integer $M \geq 2$,
in the factorization $m_s(t) = M(t) r(t)^2$ (whence $M = M(t_0)$ for
some $t_0$ and $M \ne M(t)$ for all $t<t_0$), and to consider the unit:
\[E_s(t):= \hbox{$\frac{1}{2}$} \big(t + \sqrt{ t^2 - 4 s}\big) 
= \hbox{$\frac{1}{2}$} \big(t + r(t) \sqrt{M(t)}\big), \ \hbox{of norm $s$. }\]

\smallskip
The \fop is necessary since, if $t_1 > t_0$ gives the same Kummer radical $M$, 
$E_s(t_0)=\varepsilon_M^{n_0}$ and $E_s(t_1)=\varepsilon_M^{n_1}$ with $n_1 > n_0$.

\smallskip
We shall prove (Theorem \ref{fondunit}) that, under the \fop algorithm, one always 
obtains the minimal possible power $n \in \{1,2\}$ in the writing $E_s(t) = 
\varepsilon_M^n$, whence $n = 2$ if and only if $s=1$ and $\Sgn=-1$, which
means that {\it $E_s(t)$ is always the fundamental unit of norm $s$}.

\smallskip
The following result shows that any square-free integer $M \geq 2$ may be obtained for 
$\BB$ large enough.

\begin{proposition} \label{existence}
Consider the polynomial $m_1(t)= t^2 - 4$. For any square-free integer $M \geq 2$, 
there exists $t \geq 1$ such that $m_1(t) = M r^2$. 
\end{proposition}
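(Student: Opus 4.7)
The plan is to produce $t$ directly from the fundamental unit of $K = \Q(\sqrt M)$. Since $K$ is a real quadratic field, its unit group has $\Z$-rank $1$; fix the fundamental unit $\varepsilon_M > 1$ and write it, following the conventions of Section 1.2, as $\varepsilon_M = \frac{1}{2}(a + b\sqrt M)$ with $a, b \in \Z_{\geq 1}$ of the same parity (they can be odd only when $M \equiv 1 \pmod 4$). The norm condition $\Norm(\varepsilon_M) = \Sgn \in \{-1,1\}$ then reads $a^2 - M b^2 = 4\,\Sgn$.

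First I would treat the easy case $\Sgn = +1$: the equality $a^2 - M b^2 = 4$ says exactly that $m_1(a) = a^2 - 4 = M b^2$, so $t := a$, $r := b$ already works. Note that $a \geq 3$ automatically, because $a^2 = 4 + M b^2 \geq 4 + 2 \cdot 1 = 6$.

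In the remaining case $\Sgn = -1$, I would replace $\varepsilon_M$ by $\varepsilon_M^2$, which necessarily has norm $+1$. A direct expansion gives
\[
\varepsilon_M^2 = \tfrac{1}{2}\bigl(A + B \sqrt M\bigr), \quad A = \tfrac{a^2 + M b^2}{2}, \quad B = a b,
\]
where $A \in \Z$ since $a \equiv b \pmod 2$. Using $a^2 = M b^2 - 4$, this simplifies to $A = M b^2 - 2$, and one checks $A^2 - M B^2 = 4$. Hence $t := A$, $r := B$ solves $m_1(t) = M r^2$, and again $t \geq 3$ (for $M = 2$ we have $b$ even so $A \geq 6$; for $M \geq 3$, $A \geq M - 2 \geq 1$, and in fact the equality $A^2 = 4 + M B^2$ with $B \geq 1$ forces $A \geq 3$).

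There is no real obstacle here: the existence of $\varepsilon_M$ is Dirichlet's unit theorem, and the only minor care needed is the bookkeeping of parities of $(a,b)$ (handled uniformly by the convention $\alpha = \frac{1}{2}(u+v\sqrt M)$ with $u \equiv v \pmod 2$) together with the trivial squaring computation in the $\Sgn = -1$ case. Optionally one can remark that this argument shows more: the set of admissible $t$ for a given $M$ is exactly $\{\Trace(\eta) : \eta \in \langle \varepsilon_M^{1+(1-\Sgn)/2}\rangle,\ \eta > 1\}$, which foreshadows the finer statement of Theorem~\ref{fondunit} that the \fop algorithm picks out precisely the fundamental one.
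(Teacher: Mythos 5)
Your proof is correct and follows essentially the same route as the paper: the paper's (very terse) proof also derives solutions of $t^2 - M r^2 = 4$ from powers $\varepsilon_M^n = \frac{1}{2}(t + r\sqrt M)$ of the fundamental unit, taking care of the two integral-basis/parity cases. You merely make explicit what the paper leaves implicit, namely taking $\varepsilon_M$ itself when $\Sgn = 1$ and carrying out the squaring computation $A = Mb^2 - 2$, $B = ab$ when $\Sgn = -1$, which is a faithful (and welcome) expansion of the same argument.
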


\begin{proof} 
The corresponding equation $t^2 - 4 = M r^2$ becomes of the form $t^2 -  M r^2 = 4$.
Depending on the writing in $\Z[\sqrt M]$ ($M \equiv 2,3 \pmod 4$) or 
$\Z\big [\frac{1+\sqrt M}{2} \big]$ ($M \equiv 1 \pmod 4$),
of the powers $\varepsilon_M^n = \frac{1}{2}(t+r\sqrt M)$, $n \geq 1$, of the
fundamental unit $\varepsilon_M^{}$, this selects infinitely many $t \in \Z_{\geq 1}$.
\end{proof}

\begin{remark}\label{mprime}
{\rm One may use, instead, the polynomial $m'_1(t)= t^2 - 1$ since for any 
fundamental unit of the form $\varepsilon_M^{} = \frac{1}{2}(a+b\sqrt M)$, $a, b$ odd,
then $\varepsilon_M^3 \in \Z[\sqrt M]$, but some radicals are then obtained
with larger values of $t$; for instance, $m_1(5)=21$ and $m'_1(55)=21 \cdot 12^2$
corresponding to $55 + 12 \sqrt{21} = \big( \frac{1}{2}(5+\sqrt {21}) \big)^2$. 

\smallskip
Since for $t = 2t'$, $t^2 + 4s = 4 (t'^2 -s)$ gives the same Kummer 
radical as $t'^2 -s$, in some cases we shall use $m'_s(t) := t^2 - s$ and especially 
$m'_1(t) := t^2 - 1$ which is ``universal'' for giving all Kummer radicals.

\smallskip
With the polynomials $m_{-1}(t)= t^2 + 4$ or $m'_{-1}(t)= t^2 + 1$
a solution does exist if and only if $\Norm(\varepsilon_M^{})=-1$ and one 
obtains odd powers of $\varepsilon_M^{}$.}
\end{remark}

\subsection{Checking of the exponent \texorpdfstring{$n$}{Lg} in 
\texorpdfstring{$E_s(t)=\varepsilon_{M(t)}^n$}{Lg}}\label{verif}

The following program determines the expression of $E_s(t)$ as power of
the fundamental unit of $K$; it will find that there is no counterexample to the 
relation $E_s(t) \in \{\varepsilon_{M(t)}^{}, \varepsilon_{M(t)}^2\}$, depending on $\Sgn$, 
from Table \eqref{array}; this will be proved later (Theorem \ref{fondunit}). 
So these programs are only for verification, once for all, because they unnecessarily 
need much more execution time.

\smallskip
Since $E_s(t)$ is written in $\frac{1}{2}\Z[\sqrt M]$ and $\varepsilon_M^{}$ on
the usual $\Z$-basis of $\ZK$ denoted $\sf \{1, w\}$ by PARI (from the instruction
${\sf quadunit}$), we write $E_s(t)$ on the PARI basis ${\sf \{1, quadgen(D)\}}$,
where ${\sf D=quaddisc(M)}$ is the discriminant.

\smallskip
One must specify $\BB$ and $s$, the program takes into account the first
value ${\sf 2+s}$ of $t$ since $t=1,2$ are not suitable when $s=1$; then
the test ${\sf n>(3+s)/2}$ allows the cases $n=1$ or $2$ when $s=1$. 
The output of counterexamples is given by the (empty) list ${\sf Vn}$:

\subsubsection{Case $s=1$, $m(t)=t^2-4$ (expected exponents $n \in \{1,2\}$)}
${}$
\ft\begin{verbatim}
{B=10^6;s=1;LM=List;LN=List;for(t=2+s,B,
mt=t^2-4*s;C=core(mt,1);M=C[1];r=C[2];res=Mod(M,4);
D=quaddisc(M);w=quadgen(D);Y=quadunit(D);
if(res!=1,Z=1/2*(t+r*w));if(res==1,Z=(t-r)/2+r*w);
z=1;n=0;while(Z!=z,z=z*Y;n=n+1);L=List([M,n]);
listput(LM,vector(2,c,L[c])));
VM=vecsort(vector(B-(1+s),c,LM[c]),1,8);
print(VM);print("#VM = ",#VM);
for(k=1,#VM,n=VM[k][2];if(n>(3+s)/2,Ln=VM[k];
listput(LN,vector(2,c,Ln[c]))));Vn=vecsort(LN,1,8);
print("exceptional powers : ",Vn)}
[M,n]=
[2,2],[3,1],[5,2],[6,1],[7,1],[10,2],[11,1],[13,2],[14,1],[15,1],[17,2],[19,1],[21,1],
[22,1],[23,1],[26,2],[29,2],[30,1],[31,1],[33,1],[34,1],[35,1],[37,2],[38,1],[39,1],
[41,2],[42,1],[43,1],[46,1],[47,1],[51,1],[53,2],[55,1],[57,1],[58,2],[59,1],[61,2],
[62,1],[65,2],[66,1],[67,1],[69,1],[70,1],[71,1],[74,2],[77,1],[78,1],[79,1],[82,2],
(...)
[999982000077,1],[999986000045,1],[999990000021,1],[999997999997,1]
#VM = 998893
exceptional powers:List([])
\end{verbatim}\ns

\subsubsection{Case $s=-1$, $m(t)=t^2+4$ (expected exponents $n = 1$)}
${}$
\ft\begin{verbatim}
[M,n]=
[2,1],[5,1],[10,1],[13,1],[17,1],[26,1],[29,1],[37,1],[41,1],[53,1],[58,1],[61,1],
[65,1],[73,1],[74,1],[82,1],[85,1],[89,1],[97,1],[101,1],[106,1],[109,1],[113,1],
[122,1],[130,1],[137,1],[145,1],[149,1],[157,1],[170,1],[173,1],[181,1],[185,1],
[197,1],[202,1],[218,1],[226,1],[229,1],[233,1],[257,1],[265,1],[269,1],[274,1],
(...)
[999986000053,1],[999990000029,1],[999994000013,1],[999998000005,1]
#VM = 999874
exceptional powers:List([])
\end{verbatim}\ns

\subsection{Remarks on the use of the \texorpdfstring{\fop}{Lg}algorithm}

(i) For a matter of space, the programs do not print the units $E_s(t)$ in the outputs,
but it may be deduced easily. 
To obtain a more complete data, it suffices to replace the instructions:
\[{\sf L=List([M,n]), \ \ \  listput(LM,vector(2,c,L[c]))}, \ \ \  {\sf listput(LN,vector(2,c,Ln[c]))}\]

\noindent
by the following ones (but any information can be put in ${\sf L}$; the sole condition
being to put ${\sf M}$ as first component):
\[{\sf L=List([M,n,t]), \ \ \  listput(LM,vector(3,c,L[c]))}, \ \ \  {\sf listput(LN,vector(3,c,Ln[c]))}\]

\noindent
or simply:
\[{\sf L=List([M,t]), \ \ \ \  listput(LM,vector(2,c,L[c]))}, \ \ \ \ {\sf listput(LN,vector(2,c,Ln[c]))}\]

\noindent
giving the parameter $t$ whence the trace, then the whole integer of $\Q(\sqrt M)$; 
for instance for $m_{-1}(t)=t^2+4$ and the general program with outputs ${\sf [M,n,t]}$:

\smallskip
\ft\begin{verbatim}
[M,n,t]=
[2,1,2],[5,1,1],[10,1,6],[13,1,3],[17,1,8],[26,1,10],[29,1,5],[37,1,12],[41,1,64],
[53,1,7],[58,1,198],[61,1,39],[65,1,16],[73,1,2136],[74,1,86],[82,1,18],[85,1,9],
[89,1,1000],[97,1,11208],[101,1,20],[106,1,8010],[109,1,261],[113,1,1552],
[122,1,22],[130,1,114],[137,1,3488],...
\end{verbatim}\ns

For instance for the data ${\sf [41,1,64]}$, one has $t=64$ 
giving $t^2+4=4100$, whence the fundamental unit $E_{-1}(64)=\varepsilon_{41}^{} 
=  \frac{1}{2}(64 + 10 \sqrt{41})$. Another interesting fact is the case of ${\sf [137,1,3488]}$ 
giving a large fundamental unit at the beginning of the list.

\smallskip
(ii) The programs of \S\,\ref{verif}, computing $n$, may be used with changing 
$m_s(t)$ into other polynomials as those given Section \ref{section5}, 
or by any $T:= f(t)$ with the data ${\sf mt=T^2 \pm 4}$ and  ${\sf Z = (T+r*w)/2}$ 
as the following about units $E_s(T) = \frac{1}{2} (T + r \sqrt M)$.

\smallskip
\quad (a) ${\sf T=t^2}$ (traces are squares); all are fundamental units 
($\BB = 10^4$, outputs ${\sf [M,n]}$):

\smallskip
\ft\begin{verbatim}
{B=10^4;s=1;LN=List;LM=List;for(t=2+s,B,T=t^2;
mt=T^2-4*s;C=core(mt,1);M=C[1];r=C[2];res=Mod(M,4);
D=quaddisc(M);w=quadgen(D);Y=quadunit(D);
if(res!=1,Z=1/2*(T+r*w));if(res==1,Z=(T-r)/2+r*w);
z=1;n=0;while(Z!=z,z=z*Y;n=n+1);L=List([M,n]);
listput(LM,vector(2,c,L[c])));
VM=vecsort(vector(B-(1+s),c,LM[c]),1,8);
print(VM);print("#VM = ",#VM);
for(k=1,#VM,n=VM[k][2];
if(n!=1,Ln=VM[k];listput(LN,vector(2,c,Ln[c]))));
Vn=vecsort(LN,1,8);print("exceptional powers : ",Vn)}
[M,n]=
[7,1],[51,1],[69,1],[77,1],[187,1],[287,1],[323,1],[723,1],[1023,1],[1067,1],[1077,1],
[2397,1],[3053,1],[3173,1],[5183,1],[6347,1],[6557,1],[9799,1],[14189,1],[14637,1],
[15117,1],[16383,1],[26243,1],[29127,1],[31093,1],[39999,1],[43637,1],[47103,1],
[47213,1],[50621,1],[71111,1],[71283,1],[83517,1],[99763,1],[102613,1],[114243,1],
(...)
[9956072546774637,1],[9964048570846557,1],[9988005398920077,1],[9996000599959997,1]
#VM = 9998
exceptional powers : List([])
\end{verbatim}\ns

\quad (b) ${\sf T=prime(t)}$ (traces are prime), ${\sf s=-1}$ ($\BB = 10^4$, outputs 
${\sf [M,T=prime(t),n]}$); there is only the exception ${\sf [5,11,5]}$ 
obtained as $\varepsilon_5^5 = \frac{1}{2} (5 + 11\sqrt 5)$:

\ft\begin{verbatim}
[M,T=prime(t),n]
[5,11,5],[29,5,1],[53,7,1],[149,61,1],[173,13,1],[293,17,1],[317,89,1],[365,19,1],
[533,23,1],[773,139,1],[797,367,1],[821,16189,1],[965,31,1],[1373,37,1],[1493,2357,1],
[1685,41,1],[1781,211,1],[1853,43,1],[1997,9161,1],[2213,47,1],[2285,239,1],[2309,17539,1],
[2477,647,1],[2813,53,1],[3485,59,1],[3533,2437,1],[3653,1511,1],
(..)
[10965650093,104717,1],[10966906733,104723,1],[10968163445,104729,1]
#VM = 9995
exceptional powers:List([[5,11,5]])
\end{verbatim}\ns

\smallskip
(iii) When several polynomials $m_i(t)$, $1 \leq i \leq N$, are considered together 
(to get more Kummer radicals solutions of the problem), there is in general
commutativity of the two sequences in ${\sf for(t=1,\BB,for(i=1,N,mt=\cdots))}$ and
${\sf for(i=1,N,for(t=1,\BB,mt=\cdots))}$. But we will always use the first one.

\section{Application of the \texorpdfstring{\fop}{Lg}algorithm to norm equations}\label{S4}

We will speak of solving a norm equation in $K= \Q(\sqrt M)$, for the search of 
integers $\alpha \in \ZK^+$ such that $\Norm(\alpha) = s\nu$, for $s \in \{-1,1\}$ 
and $\nu \in \Z_{\geq 1}$ given (i.e., $\alpha = \frac{1}{2}\big(u + v \sqrt M\big)$,
$u, v \in \Z_{\geq 1}$). If the set of solutions is non-empty we will define the 
notion of {\it fundamental solution}; we will see that this definition is 
common to units ($\nu = 1$) and non-units. 

\smallskip
We explain, in Theorem \ref{unicity}, under what conditions such a fundamental 
solution for $\nu > 1$ does exist, in which case it is necessarily unique and found 
by means of the \fop, algorithm using $m_{-1}(t)$ or $m_1(t)$ (depending in 
particular on $\Sgn$).

\smallskip
Note that the resulting PARI programs only use very elementary instructions and
never the arithmetic ones defining $K$ (as ${\sf bnfinit, K.fu, bnfisintnorm, ...}$); whence 
the rapidity even for large upper bounds $\BB$.

\subsection{Main property of the trace map for units}
In the case $\nu = 1$, let $\Sgn = \Norm(\varepsilon_M^{})$; we will see 
that $\alpha$ defines the generator of the group of units of norm $s$ of $\Q(\sqrt M)$ 
when it exists (whence $\varepsilon_M^{}$ if $s=\Sgn$ or $\varepsilon_M^2$ if $\Sgn=-1$ 
and $s=1$).  

\begin{theorem}\label{thmtrace}
Let $M \geq 2$ be a square-free integer. Let $\varepsilon = \frac{1}{2}(a+b \sqrt M) >1$ 
be a unit of $K := \Q(\sqrt M)$ (non-necessarily fundamental).
Then $\Trace(\varepsilon^n)$ defines a strictly increasing sequence 
of integers for $n \geq 1$.\,\footnote{The property holds from $n=0$ ($\Trace(1)=2$), 
except for $M=5$ ($\Trace(\varepsilon_M^{}) = 1$, $\Trace(\varepsilon_M^2) = 3$) 
and $M=2$ ($\Trace(\varepsilon_M^{}) = 2$, $\Trace(\varepsilon_M^2) = 6$)).}
\end{theorem}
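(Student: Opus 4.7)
The plan is to exploit the linear recurrence satisfied by $T_n := \Trace(\varepsilon^n) = \varepsilon^n + \varepsilon'^n$. Since $\varepsilon$ is a unit, its conjugate $\varepsilon'$ obeys $\varepsilon\varepsilon' = N := \Norm(\varepsilon) \in \{-1,1\}$, so the characteristic polynomial of $\varepsilon$ is $X^2 - aX + N$ with $a := \Trace(\varepsilon)$; summing the obvious relations for $\varepsilon^{n+2}$ and $\varepsilon'^{n+2}$ yields
\[ T_{n+2} = a\,T_{n+1} - N\,T_n, \qquad T_0 = 2,\ T_1 = a. \]
I would then prove $T_{n+1} > T_n$ for every $n \geq 1$ by induction, letting the recurrence do essentially all the work.

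Before starting the induction I would record two elementary facts. (1)~$T_n > 0$ for every $n \geq 0$: since $\varepsilon > 1$ and $\varepsilon' = N/\varepsilon$, one has $T_n = \varepsilon^n + N^n\varepsilon^{-n}$, equal either to $\varepsilon^n + \varepsilon^{-n}$ (when $N^n = 1$) or to $\varepsilon^n - \varepsilon^{-n}$ (when $N^n = -1$, which forces $n \geq 1$), and both are positive. (2)~The integer $a$ is positive, and moreover $a \geq 3$ when $N = 1$: indeed $a = \varepsilon + 1/\varepsilon > 2$ for $\varepsilon > 1$, so being an integer forces $a \geq 3$.

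The base case $n = 1$ is immediate from $T_2 - T_1 = a^2 - a - 2N$: when $N = 1$ this factors as $(a-2)(a+1) \geq 4$, and when $N = -1$ it equals $a^2 - a + 2$, positive since its discriminant is $-7$. For the inductive step, assuming $T_{n+1} > T_n > 0$, the recurrence gives
\[ T_{n+2} - T_{n+1} = (a-1)\,T_{n+1} - N\,T_n, \]
which for $N = -1$ equals $(a-1)T_{n+1} + T_n \geq T_n > 0$ (using $a \geq 1$), and for $N = 1$ is bounded below by $2T_{n+1} - T_n > T_n > 0$ (using $a \geq 3$ and the inductive hypothesis). There is no real obstacle; the only point worth noting is that the hypothesis $a \geq 3$ in the norm-one case is precisely why the property fails to extend to $n = 0$ exactly for the two small fields $\Q(\sqrt 2)$ (where $a = 2$) and $\Q(\sqrt 5)$ (where $a = 1$) listed in the footnote.
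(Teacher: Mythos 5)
Your proof is correct, and it takes a genuinely different route from the paper's. The paper argues analytically over the reals: it writes $T_n = \varepsilon^n + s^n\varepsilon^{-n}$, reduces the claim to the positivity of $\Delta_n(\varepsilon) = \varepsilon^{2n+1}(\varepsilon-1) - s^n(\varepsilon - s)$, and in the one delicate case ($s=-1$, $n$ even) resorts to calculus — studying $f(x) = x^{4k+2} - x^{4k+1} - x - 1$ via $f'$ and $f''$, then invoking the fact that the smallest unit $>1$ is $\varepsilon_0 = \frac{1+\sqrt 5}{2}$ and checking $F(1) \approx 4.236 > 0$. You instead stay entirely inside $\Z$: the linear recurrence $T_{n+2} = a\,T_{n+1} - N\,T_n$ with $T_0 = 2$, $T_1 = a$, together with the two elementary facts $T_n > 0$ and ($a \geq 1$ if $N = -1$, $a \geq 3$ if $N = 1$), yields a clean two-case induction with no real analysis and no appeal to the minimal unit. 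Your route is shorter and arguably more transparent; in particular it explains structurally why the footnote's exceptions at $n = 0$ are exactly $M = 5$ and $M = 2$, since $T_1 - T_0 = a - 2$ and $a \leq 2$ can occur only in the norm $-1$ case, with $a = 1$ forcing $\varepsilon = \frac{1+\sqrt 5}{2}$ and $a = 2$ forcing $\varepsilon = 1 + \sqrt 2$ — a point the paper only records numerically. The single detail you leave implicit is the positivity of $a$ when $N = -1$ (immediate from $a = \varepsilon - \varepsilon^{-1} > 0$ for $\varepsilon > 1$), which you use as $a \geq 1$ in the inductive step; this is a trivially fillable gap, not a flaw. What the paper's computation buys in exchange is the explicit lower bound on the growth encoded in $\Delta_n(\varepsilon)$, but for the statement as given your recurrence argument is the more economical proof.
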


\begin{proof}
Set $\ov \varepsilon = \frac{1}{2}(a-b \sqrt M)$ for the conjugate of $\varepsilon$ 
and let $s = \varepsilon \ov \varepsilon  = \pm 1$
be the norm of~$\varepsilon$; then the trace of $\varepsilon^n$ is 
$T_n := \varepsilon^n + \ov \varepsilon^n = \varepsilon^n + \ffrac{s^n}{\varepsilon^n}$.
Thus, we have:
\[ \frac{T_{n+1}}{T_n} = \frac{\varepsilon^{n+1} + \ffrac{s^{n+1}}{\varepsilon^{n+1}}}
{\varepsilon^n + \ffrac{s^n}{\varepsilon^n}} =  
\frac{\varepsilon^{2(n+1)}+s^{n+1}}{\varepsilon^{n+1}} \times
\frac{\varepsilon^n}{\varepsilon^{2n}+s^n} = 
\frac{\varepsilon^{2(n+1)}+s^{n+1}}{\varepsilon^{2n+1}+s^n \varepsilon}.\]

To prove the increasing, consider $\varepsilon^{2n+1}+s^n \varepsilon$
and $\varepsilon^{2(n+1)}+s^{n+1}$, which are positive for all $n$ since 
$\varepsilon > 1$; then:
\begin{equation}\label{Depsilon}
\begin{aligned}
\Delta_n(\varepsilon)& :=  \varepsilon^{2(n+1)}+s^{n+1} - (\varepsilon^{2n+1}+s^n \varepsilon)
 = \varepsilon^{2(n+1)} - \varepsilon^{2n+1} + s^{n+1} - s^n \varepsilon \\
& = \varepsilon^{2n+1}(\varepsilon - 1) - s^n (\varepsilon - s).
\end{aligned}
\end{equation}

\smallskip
(i) Case $s=1$. Then $\Delta_n(\varepsilon) = (\varepsilon - 1) (\varepsilon^{2n+1} - 1)$
is positive.

\smallskip
(ii) Case $s=-1$. Then $\Delta_n(\varepsilon) = 
\varepsilon^{2(n+1)} - \varepsilon^{2n+1} - (-1)^n (\varepsilon + 1)$.
If $n$ is odd, the result is obvious; so, it remains to look
at the expression for $n = 2k$, $k \geq 1$:
\begin{equation} \label{Delta}
\Delta_{2k}(\varepsilon) = \varepsilon^{4k+2} - \varepsilon^{4k+1} - \varepsilon - 1.
\end{equation}

Let $f(x) := x^{4k+2} - x^{4k+1} - x - 1$; then $f'(x) = (4k+2)x^{4k+1} -  (4k+1)x^{4k} - 1$
and $f''(x) = (4k+1)x^{4k-1} [(4k+2)x - 4k] \geq 0$ for all $x \geq 1$. Thus $f'(x)$
is increasing for all $x \geq 1$ and since $f'(1)=0$, $f(x)$ is an increasing map
for all $x \geq 1$; so, for $k \geq 1$ fixed, $\Delta_{2k}(\varepsilon)$ is increasing 
regarding $\varepsilon$. 

\smallskip
Since the smallest unit $\varepsilon >1$ with positive coefficients
is $\varepsilon_0^{} := \frac{1 + \sqrt 5}{2} \approx 1.6180...$
we have to look, from \eqref{Delta}, at the map
$F(z) := \varepsilon_0^{4z+2} - \varepsilon_0^{4z+1} - \varepsilon_0^{} -1$, 
for $z \geq 1$, to check if there exists an unfavorable value of $k$; so:
\[F'(z) :=4 \log(\varepsilon_0^{}) \varepsilon_0^{4z+2} - 4 \log(\varepsilon_0^{}) \varepsilon_0^{4z+1} 
= 4 \log(\varepsilon_0^{})  \varepsilon_0^{4z+1}  (\varepsilon_0^{} - 1) >0. \]
Since $F(1) \approx 4.2360 >0$, one gets $\Delta_n(\varepsilon) >0$ in the case $s=-1$, 
$n$ even.
\end{proof}

\subsection{Unlimited lists of fundamental units of norm \texorpdfstring{$s $}{Lg},
\texorpdfstring{$s \in \{-1,1\}$}{Lg}} \label{listunits}
We have the main following result.

\begin{theorem}\label{fondunit}
Let $\BB \gg 0$ be given.
Let $m_s(t) = t^2 - 4 s$, $s \in \{-1,1\}$ fixed. Then, as $t$ grows from $1$ up to $\BB$, 
for each first occurrence of a square-free integer $M \geq 2$ in the factorization
$m_s(t) = M r^2$, the unit $E_s(t) = \frac{1}{2}(t + r \sqrt M)$ is the fundamental 
unit of norm $s$ of $\Q(\sqrt M)$ (according to the Table \eqref{array} 
in \S\,\ref{discussion}, we have $E_s(t) = \varepsilon_M^{}$ if $s = -1$ 
or if $s = \Sgn = 1$, then $E_s(t) = \varepsilon_M^2$ if $s=1$ and $\Sgn = -1$).
\end{theorem}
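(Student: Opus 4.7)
The plan is to exhibit $E_s(t_0)$ as a power of $\varepsilon_M^{}$ governed by the structure of the unit group, and then to use Theorem~\ref{thmtrace} to run a one-step descent whose conclusion contradicts the very definition of a \fop first occurrence unless the exponent is already minimal.

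First I would fix $t_0$ as the first $t \in [1,\BB]$ with Kummer radical $M$ in $m_s(t) = M r^2$, so $E_s(t_0) = \frac{1}{2}(t_0 + r \sqrt M)$ is a positive unit of $K = \Q(\sqrt M)$ of norm $s$. The mere existence of this unit forces $\Sgn = -1$ whenever $s = -1$, while both signs of $\Sgn$ remain possible when $s = 1$. I would then describe, uniformly across the three cases of Table~\eqref{array}, the set
\[U_s := \{\xi \in \ZK^\times : \Norm(\xi) = s,\ \xi > 1\} = \{\varepsilon_M^{a(k)} : k \geq 1\},\]
where $a(k) = k$ if $s = \Sgn = 1$, $a(k) = 2k$ if $s = 1$ and $\Sgn = -1$, and $a(k) = 2k-1$ if $s = \Sgn = -1$. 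In each case $a$ is a strictly increasing $\Z$-valued function of $k$, hence by Theorem~\ref{thmtrace} the sequence $\Trace(\varepsilon_M^{a(k)})$ is strictly increasing in $k$.

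Next I would perform the descent. Write $E_s(t_0) = \varepsilon_M^{a(n)}$ for a unique $n \geq 1$ and suppose for contradiction that $n \geq 2$. Set $\xi := \varepsilon_M^{a(n-1)} \in U_s$, written as $\xi = \frac{1}{2}(t' + r' \sqrt M)$ with $t' = \Trace(\xi)$ and $r' \geq 1$. From the strict monotonicity above, $t' < t_0$, and since $\xi > 1$ with $\Norm(\xi) = s$, the integer $t'$ is at least $1$ (and even $\geq 3$ when $s = 1$, since $\xi + 1/\xi > 2$). The norm relation $\Norm(\xi) = s$ reads $t'^2 - M r'^2 = 4s$, i.e., $m_s(t') = M r'^2$, and $M$ being square-free equals the Kummer radical of $m_s(t')$. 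Thus $t'$ is an earlier index in the \fop loop producing the same Kummer radical $M$, contradicting the first-occurrence choice of $t_0$. Hence $n = 1$ and $E_s(t_0) = \varepsilon_M^{a(1)}$, which is $\varepsilon_M^{}$ in the first and third cases and $\varepsilon_M^2$ in the second, exactly as Table~\eqref{array} records.

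The main obstacle is to package the three possibilities $(s,\Sgn) \in \{(1,1),(1,-1),(-1,-1)\}$ into a single descent step; the subtle point is the sign constraint $s^n = s$ which, in the case $s = -1$, forces one to step by two in the exponent of $\varepsilon_M^{}$ rather than by one, so that one must work with the coset-indexing $a(k) = 2k-1$ and invoke Theorem~\ref{thmtrace} with argument $a(n-1)$ rather than $n-1$. Once this indexing is in place, monotonicity of the trace and the elementary identity $t'^2 - M r'^2 = 4s$ translate directly into the earlier-occurrence contradiction, and no further arithmetic input (in particular no class-field theoretic or analytic ingredient) is needed.
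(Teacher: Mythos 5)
Your proof is correct and follows essentially the same route as the paper: both arguments invoke the trace monotonicity of Theorem~\ref{thmtrace} to produce a unit of norm $s$ with trace $t' < t_0$, observe that $t'^2 - M r'^2 = 4s$ makes $t'$ an earlier occurrence of the same Kummer radical $M$, and conclude by contradiction with the first-occurrence choice. The only differences are cosmetic: you descend one step in the explicit indexing $a(k)$ of the units of norm $s$ exceeding $1$, whereas the paper compares $E_s(t_0)$ directly with the fundamental unit $\varepsilon_M^{n_0}$, $n_0 \in \{1,2\}$, and your careful handling of the parity constraint when $s = \Sgn = -1$ is implicit in the paper's case discussion via Table~\eqref{array}.
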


\begin{proof}
Let $M_0 \geq 2$ be a given square-free integer. Consider the first occurrence 
$t=t_0$ giving $m_s(t_0) = M_0 r(t_0)^2$ if it exists (existence always fulfilled 
for $s=1$ by Proposition~\ref{existence}); whence $M_0=M(t_0)$.
Suppose that $E_s(t_0) = \frac{1}{2} \big (t_0 + r(t_0) \sqrt{M(t_0)}\big)$ is not the 
fundamental unit of norm $s$, $\varepsilon_{M(t_0)}^{n_0}$ 
($n_0 \in\{1,2\}$) but a non-trivial power $(\varepsilon_{M(t_0)}^{n_0})^n$,~$n >1$. 

\smallskip
Put $\varepsilon_{M(t_0)}^{n_0}=: \frac{1}{2}(a+b  \sqrt{M(t_0)})$; 
from Table  \eqref{array}, $n_0 \in \{1,2\}$ is such that 
$\Norm(\varepsilon_{M(t_0)}^{n_0}) = s$
(recall that if $s=1$ and $\Sgn=-1$, $n_0=2$, if $\Sgn=s=1$, 
$n_0=1$; if $s=-1$, necessarily $\Sgn=-1$ and $n_0=1$, 
otherwise there were no occurrence of $M_0$ for $s=-1$ and $\Sgn=1$).

\smallskip
Then, Theorem \ref{thmtrace} on the traces implies $0 < a < t_0$. We have:
\[\hbox{$a^2 - M(t_0) b^2 = 4 s\ $ and $\ m_s(a) = a^2- 4 s =: M(a) r(a)^2$;}\]
but these relations imply $M(t_0) b^2 = M(a) r(a)^2$, whence  
$M(a)= M(t_0) = M_0$. That is to say, the pair $\big (t_0,\ M_0) \big)$ compared 
to $\big (a,\  M(a)=M_0) \big)$, was not the first occurrence of $M_0$ (absurd).
\end{proof}

\begin{corollary}\label{square}
Let $t \in \Z_{\geq 1}$ and let $E_1(t) = \frac{1}{2} \big (t + \sqrt{t^2-4}\big)$
of norm $1$. Then $E_1(t)$ is a square of a unit of norm $-1$, if and only if there exists 
$t' \in \Z_{\geq 1}$ such that $t=t'^2+2$; thus $E_1(t) = 
\big( \frac{1}{2}(t' + \sqrt{t'^2+4})\big)^2 = (E_{-1}(t'))^2$.
So, the \fop algorithm, with $m_1(t) =: M(t) r(t)^2$, gives the list of 
${\sf [M(t),t]}$ for which $\frac{1}{2} \big (t + \sqrt{t^2-4}\big) = \varepsilon_M^2$
(resp. $\varepsilon_M^{}$) if $t-2=t'^2$ (resp. if not).
\end{corollary}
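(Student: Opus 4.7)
The plan is to prove the biconditional by a short direct computation in each direction, and then deduce the final \fop statement by combining the equivalence with Theorem~\ref{fondunit}. The proof is essentially a trace identity, so I do not expect a serious obstacle: the main point is to isolate the right algebraic identity and to remember that, under the \fop, $E_1(t)$ is the \emph{fundamental} unit of norm~$1$, not just some unit of norm~$1$.

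For the ``if'' direction, I would take $t = t'^2+2$ with $t' \in \Z_{\geq 1}$, observe that
\[ t^2 - 4 = (t'^2+2)^2 - 4 = t'^2 (t'^2+4), \]
so $\sqrt{t^2-4} = t' \sqrt{t'^2+4}$, and then expand
\[ \bigl(E_{-1}(t')\bigr)^2 = \tfrac14\bigl(t'+\sqrt{t'^2+4}\bigr)^{\!2} = \tfrac12\bigl((t'^2+2) + t'\sqrt{t'^2+4}\bigr) = E_1(t). \]
For the ``only if'' direction, I would suppose $E_1(t) = \eta^2$ with $\eta \in \ZK^+$ of norm~$-1$, write $\eta = \tfrac12(t'+v'\sqrt M)$ with $t', v' \in \Z_{\geq 1}$, and use the standard identity $\Trace(\eta^2) = \Trace(\eta)^2 - 2\Norm(\eta) = t'^2+2$. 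Since $\Trace(E_1(t)) = t$, this forces $t = t'^2+2$, so $t-2$ is a perfect square.

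For the last sentence about the \fop algorithm, I would argue as follows. Let $(t,M)$ be a first occurrence obtained from $m_1$. If $t-2 = t'^2$, the ``if'' direction shows that $E_1(t) = (E_{-1}(t'))^2$; since $E_{-1}(t')$ is a unit of norm $-1$ in $\Q\bigl(\sqrt{t'^2+4}\bigr) = \Q(\sqrt M)$ (its Kummer radical is the square-free part of $t'^2+4$, which coincides with that of $t'^2(t'^2+4) = t^2-4$), we get $\Sgn = -1$, and Theorem~\ref{fondunit} yields $E_1(t) = \varepsilon_M^2$. Conversely, if $t-2$ is not a square, the ``only if'' direction shows $E_1(t)$ cannot be a square of a norm $-1$ unit, hence $\Sgn = 1$, and Theorem~\ref{fondunit} gives $E_1(t) = \varepsilon_M^{}$.
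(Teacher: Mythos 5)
Your proposal is correct and follows exactly the route the paper intends: the paper states Corollary~\ref{square} without an explicit proof, as an immediate consequence of the trace identity $\Trace(\eta^2)=\Trace(\eta)^2-2\Norm(\eta)=t'^2+2$ for a norm $-1$ unit $\eta$ together with Theorem~\ref{fondunit} (compare Corollary~\ref{intersection}, which records the same relations $t=t'^2+2$, $r=r't'$). Your two-direction computation, the identity $t^2-4=t'^2(t'^2+4)$ matching the Kummer radicals, and the appeal to Theorem~\ref{fondunit} to pin down $\varepsilon_M^{}$ versus $\varepsilon_M^2$ under the first-occurrence hypothesis are precisely the intended argument.
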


\begin{corollary} \label{intersection}
Let $M \geq 2$ be a given square-free integer and consider the two lists given by the
\fop algorithm, for $m_{-1}$ and $m_1$, respectively. 
Then, assuming $\BB$ large enough, $M$ appears in the two lists if and only if 
$\Sgn=-1$. Then $t'^2+4 = M r'^2$ for $t'$ minimal gives 
the fundamental unit $\varepsilon_M^{}=\frac{1}{2}(t'+r' \sqrt M)$ and $t^2-4=M r^2$,
for $t$ minimal, gives $\varepsilon_M^2$; whence $t=t'^2+2$ and $r=r' t'$.
\end{corollary}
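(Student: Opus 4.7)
The plan is to read off the statement essentially as a direct bookkeeping consequence of Theorem \ref{fondunit}, combined with a one-line computation of $\varepsilon_M^2$. Let me describe the steps.

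First, I would invoke Theorem \ref{fondunit} to characterize what lies in each of the two FOP lists. For $s=1$, Proposition \ref{existence} guarantees that every square-free $M \geq 2$ eventually appears, so the $m_1$-list contains $M$ unconditionally (for $\mathcal{B}$ large enough). For $s=-1$, the unit $E_{-1}(t) = \tfrac{1}{2}(t+\sqrt{t^2+4})$ has norm $-1$, so its existence in $K = \mathbb{Q}(\sqrt M)$ forces $\Sgn = \Norm(\varepsilon_M^{}) = -1$; conversely, when $\Sgn = -1$, the fundamental unit itself appears as $E_{-1}(t')$ for the appropriate $t'$, so $M$ occurs in the $m_{-1}$-list. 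This gives the equivalence ``$M$ is in both lists $\iff \Sgn=-1$.''

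Next, under $\Sgn = -1$, I would identify the two units. By Theorem \ref{fondunit} applied with $s=-1$, the minimal $t'$ with $t'^2+4 = Mr'^2$ yields $\varepsilon_M^{} = \tfrac{1}{2}(t'+r'\sqrt M)$; and applied with $s=1$, $\Sgn = -1$, the minimal $t$ with $t^2-4=Mr^2$ yields $\varepsilon_M^2 = \tfrac{1}{2}(t+r\sqrt M)$.

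Finally, I would just square $\varepsilon_M^{}$ and match coefficients. Using $Mr'^2 = t'^2+4$ (the norm relation $t'^2-Mr'^2 = -4$), a direct expansion gives
\[
\varepsilon_M^2 = \tfrac{1}{4}\bigl((t')^2 + Mr'^2 + 2t'r'\sqrt M\bigr) = \tfrac{1}{2}\bigl((t'^2+2) + t'r'\sqrt M\bigr).
\]
Comparing with $\varepsilon_M^2 = \tfrac{1}{2}(t+r\sqrt M)$ yields $t = t'^2+2$ and $r = t'r'$, as claimed. There is no real obstacle here; the only point to be careful about is citing Theorem \ref{fondunit} correctly in both cases (to ensure that $t'$ and $t$ are genuinely minimal and produce $\varepsilon_M^{}$ and $\varepsilon_M^2$ respectively, rather than higher powers), after which the rest is a one-line algebraic identity.
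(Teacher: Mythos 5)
Your proposal is correct and matches the paper's (implicit) argument: the corollary is stated there as a direct consequence of Theorem \ref{fondunit} applied for both signs (with Proposition \ref{existence} handling $s=1$ and the norm obstruction handling $s=-1$), together with the same squaring identity $\varepsilon_M^2 = \frac{1}{2}\big((t'^2+2) + t'r'\sqrt M\big)$ that underlies Corollary \ref{square}. Your care in checking that the first occurrences really yield $\varepsilon_M^{}$ and $\varepsilon_M^2$ (not higher powers) is exactly the point carried by Theorem \ref{fondunit}, so nothing is missing.
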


For $s=-1$, hence $m_{-1}(t) = t^2+4$, $t \in [1, \BB]$, we know, from 
Theorem \ref{fondunit}, that the \fop algorithm gives always the fundamental unit 
$\varepsilon_M^{}$ of $\Q(\sqrt M)$ whatever its writing in $\Z[\sqrt M]$ or in 
$\Z\big [\frac{1+\sqrt M}{2} \big]$. 

\smallskip
For $s=1$ one obtains $\varepsilon_M^2$ if 
and only if $\Sgn = -1$. So we can skip checking and use the following 
simpler program with larger upper bound $\BB=10^7$; the outputs are the 
Kummer radicals ${\sf [M]}$ in the ascending order (specify ${\sf B}$ and~${\sf s}$):

\smallskip
\ft\begin{verbatim}
MAIN PROGRAM FOR FUNDAMENTAL UNITS OF NORM s
{B=10^7;s=-1;LM=List;for(t=2+s,B,mt=t^2-4*s;
M=core(mt);L=List([M]);listput(LM,vector(1,c,L[c])));
VM=vecsort(vector(B-(1+s),c,LM[c]),1,8);
print(VM);print("#VM = ",#VM)}
s=-1
[M]=
[2],[5],[10],[13],[17],[26],[29],[37],[41],[53],[58],[61],[65],[73],[74],[82],[85],
[89],[97],[101],[106],[109],[113],[122],[130],[137],[145],[149],[157],[170],[173],
[181],[185],[193],[197],[202],[218],[226],[229],[233],[257],[265],[269],[274],[277],
[281],[290],[293],[298],[314],[317],[346],[349],[353],[362],[365],[370],[373],[389],
(...)
[99999860000053],[99999900000029],[99999940000013],[99999980000005]]
#VM = 9999742
s=1
[M]=
[2],[3],[5],[6],[7],[10],[11],[13],[14],[15],[17],[19],[21],[22],[23],[26],[29],[30],
[31],[33],[34],[35],[37],[38],[39],[41],[42],[43],[46],[47],[51],[53],[55],[57],[58],
[59],[61],[62],[65],[66],[67],[69],[70],[71],[73],[74],[77],[78],[79],[82],[83],[85],
[86],[87],[89],[91],[93],[94],[95],[101],[102],[103],[105],[107],[109],[110],[111],
(...)
[99999820000077],[99999860000045],[99999900000021],[99999979999997]
#VM = 9996610
\end{verbatim}\ns

\smallskip
The same program with outputs of the form ${\sf [M,r,t]}$ for $s=1$ gives many 
examples of squares of fundamental units.
For instance, the data ${\sf [29,5,27]}$ defines the unit $E_1(27) = \frac{1}{2} (27 + 5 \sqrt{29})$
and since $27-2=5^2$, then $t'=5$, $r'=1$ and $E_1(27) = \big(\frac{1}{2} (5 + \sqrt{29}) \big)^2 
= \varepsilon_{29}^2$.

\smallskip
Some Kummer radicals giving units $\varepsilon_M^{}$ of norm $-1$ do not appear 
up to $\BB=10^7$, e.g., $M \in$ $\{241$, $313$, $337$, $394,\,\ldots\}$; but all the 
Kummer radicals $M$, such that $\Sgn=-1$, ultimately appear as $\BB$ increases. 
So, as $\BB \to \infty$, any unit is obtained, which suggests the existence of natural 
densities in the framework of the \fop algorithm. More precisely, in the list ${\sf LM}$ 
(i.e., before using ${\sf VM=vecsort(vector(B,c,LM[c]),1,8)}$), any Kummer radical $M$ 
does appear in the list as many times as the trace of $\varepsilon_M^n$ ($n$ odd) is 
less than $\BB$, which gives for instance ($\BB=10^3$):

\ft\begin{verbatim}
[M]=
[5],[2],[13],[5],[29],[10],[53],[17],[85],[26],[5],[37],[173],[2],[229],[65],[293],[82],
[365],[101],[445],[122],[533],[145],[629],[170],[733],[197],[5],[226],[965],[257],
[1093],[290],[1229],[13],
\end{verbatim}\ns

This fact with Corollaries \ref{square}, \ref{intersection} may suggest some analytic 
computations of densities (see a forthcoming paper \cite{Gra10} for more details). For this 
purpose, we give an estimation of the gap ${\sf \order LM - \order VM = \BB - \order VM}$.

\begin{theorem} \label{gap}
Consider the \fop algorithm for units, in the interval $[1,\BB]$ and $s \in \{-1,1\}$. Let 
$\BD$ be the gap between $\BB$ and the number of results. Then, as $\BB \to \infty$:

\smallskip
(i) For the polynomial $m_{-1}(t) = t^2 + 4$, $\BD \sim \BB^\frac{1}{3}$,

\smallskip
(ii) For the polynomial $m_{1}(t) = t^2 - 4$, $\BD \sim  \BB^\frac{1}{2}$,
\end{theorem}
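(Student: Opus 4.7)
The plan is to recast the \fop algorithm as a counting problem about norm-$s$ units in real quadratic fields, identify the gap $\BD$ with non-fundamental units, and evaluate it by grouping by exponent and bootstrapping the trivial bound.

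\textbf{Bijection and gap interpretation.} Set $c=3$ if $s=1$ and $c=1$ if $s=-1$. The map $t\mapsto E_s(t)=\tfrac12(t+\sqrt{t^2-4s})$ is a bijection between integers $t\in[c,\BB]$ and norm-$s$ units $u>1$ of real quadratic fields with $\Trace(u)\leq\BB$; the inverse is $u\mapsto\Trace(u)$, and the lower bound $c$ arises from $\Trace(u)=u+u^{-1}>2$ (for $s=1$) or $\Trace(u)=u-u^{-1}>0$ (for $s=-1$). By Theorem~\ref{fondunit}, $t$ is a first occurrence iff $E_s(t)$ is the fundamental norm-$s$ unit $\eta_M$ of $\Q(\sqrt{M(t)})$; otherwise $E_s(t)=\eta_M^{k}$ with $k\geq 2$, and for $s=-1$ the relation $\Norm(\eta_M^{k})=(-1)^k=-1$ forces $k$ odd. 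Letting $F_s(T)$ denote the number of fundamental norm-$s$ units $\eta$ (across all real quadratic fields) with $\Trace(\eta)\leq T$, grouping duplicates by their exponent yields
\begin{equation*}
\BD \;=\; \sum_{k\in\mathcal{K}_s} F_s(T_k),\qquad
\mathcal{K}_1=\{2,3,4,\ldots\},\ \ \mathcal{K}_{-1}=\{3,5,7,\ldots\},
\end{equation*}
where $T_k$ is the largest $T$ such that every fundamental $\eta$ with $\Trace(\eta)\leq T$ satisfies $\Trace(\eta^k)\leq\BB$.

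\textbf{Boundary estimate and bootstrap.} Since $\eta\geq\tfrac{1+\sqrt 5}{2}>1$, both $\Trace(\eta^k)=\eta^k+s^k\eta^{-k}=\eta^k+O(1)$ and $\Trace(\eta)=\eta+O(1)$ hold, so $T_k=\BB^{1/k}+O(1)$ as $\BB\to\infty$. The bijection also gives $F_s(T)=T-c+1-\BD_s(T)\leq T$; feeding this crude bound into the formula above yields the first-order estimate $\BD_s(\BB)=O(\BB^{1/\min\mathcal{K}_s})$, namely $\BD_1=O(\BB^{1/2})$ and $\BD_{-1}=O(\BB^{1/3})$. Substituting back gives the refined $F_s(T)=T(1+o(1))$, so the dominant $k=\min\mathcal{K}_s$ term contributes $F_s(T_{\min\mathcal{K}_s})\sim\BB^{1/\min\mathcal{K}_s}$, while the remaining terms contribute $O(\BB^{1/3})$ (case $s=1$) or $O(\BB^{1/5})$ (case $s=-1$), both negligible. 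Hence (i) $\BD\sim\BB^{1/3}$ and (ii) $\BD\sim\BB^{1/2}$.

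The main obstacle is ensuring the sharp asymptotic $F_s(T_k)\sim\BB^{1/k}$ with leading constant $1$, rather than a mere $\Theta$-bound. This is handled by the exactness of the bijection combined with the bootstrap: the $O(1)$ error in $T_k=\BB^{1/k}+O(1)$ is absorbed by the $o(1)$ slack in $F_s(T)=T(1+o(1))$, since $F_s$ is monotone with near-linear growth, and the monotonicity of $\Trace(\eta^k)$ in $k$ (Theorem~\ref{thmtrace}) guarantees that no $\eta$ is double-counted in the sum over $\mathcal{K}_s$.
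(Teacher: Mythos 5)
Your proof is correct, and while it rests on the same two pillars as the paper's argument --- Theorem \ref{fondunit} identifying first occurrences with fundamental norm-$s$ units (so duplicates are exactly the powers $\eta^k$, $k\ge 2$, with $k$ odd forced when $s=-1$), and the size relation $\Trace(\eta^k)=\eta^k+O(1)$ giving the threshold trace $\BB^{1/k}+O(1)$ --- your evaluation of the count is organized differently and is in fact tighter. The paper fixes a fundamental unit of trace $t_0$ and counts admissible exponents, $n_{\max}(t_0)\approx\frac{1}{2}\big[\frac{\log\BB}{\log t_0}-1\big]$, then evaluates $\sum_{t_0\le\Bb}n_{\max}(t_0)$ with $\Bb=\BB^{1/3}$ (resp.\ $\BB^{1/2}$) via $\sum_{t\le\Bb}1/\log t\sim\Bb/\log\Bb$; it explicitly concedes ``there will be repetitions in the sum\dots we obtain an upper bound,'' i.e., it treats every $t\le\Bb$ as the trace of a fundamental unit and does not justify the matching lower bound needed for the stated $\sim$. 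You sum the same double count in the other order (over exponents $k\in\mathcal{K}_s$, counting fundamental units $F_s(T_k)$ below the threshold), and --- this is the genuine addition --- you close the density gap with the bootstrap: the trivial $F_s(T)\le T$ gives $\BD=O(\BB^{1/\min\mathcal{K}_s})$, which fed back yields $F_s(T)=T(1+o(1))$, hence the asymptotic with leading constant $1$ and explicit secondary terms ($\BB^{1/2}+O(\BB^{1/3})$ for $s=1$, $\BB^{1/3}+O(\BB^{1/5})$ for $s=-1$). Your route buys a complete proof of the claimed equivalence and dispenses with the logarithmic-sum estimate; the paper's buys brevity and keeps the per-unit multiplicity $n_{\max}$ visible, which feeds its later density discussion around Corollaries \ref{square} and \ref{intersection}. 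Two cosmetic repairs: define $T_k$ through the full bijection (the largest integer trace $\tau$ whose norm-$s$ unit $\eta_\tau$ satisfies $\Trace(\eta_\tau^k)\le\BB$) rather than quantifying over fundamental $\eta$ only, so that $T_k=\BB^{1/k}+O(1)$ holds literally even across stretches free of fundamental traces; and note that the no-double-counting in $\BD=\sum_k F_s(T_k)$ really rests on the uniqueness of the fundamental norm-$s$ unit of each field (which pins down $\eta$, and then $k$), Theorem \ref{thmtrace} serving only to separate distinct exponents for a fixed $\eta$ and to identify duplicates with strictly earlier occurrences of the same radical.
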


\begin{proof}
(i) In the list ${\sf LM}$ of Kummer radicals giving units of norm $-1$, we know, from 
Theorem \ref{fondunit}, that one obtain first the fundamental unit 
$\varepsilon_0 := \varepsilon_{M_0}^{}$
from the relation $t_0^2+4 = M_0 r_0^2$, then its odd powers $\varepsilon_{M_0}^{2n+1}$ 
for $n \in [1, n_{\max}]$ corresponding to some $t_n$ such that $t_n^2+4 = M_0 r_n^2$ and
$t_n \leq t_{\max}$ defined by the equivalence:
$$\ffrac{1}{2}\big( t_{\max}+r_{\max}\sqrt {M_0}\big) \sim
\Big(\ffrac{1}{2}\big( t_0+r_0 \sqrt {M_0}\big) \Big)^{2n_{\max}+1}$$ 
in an obvious meaning. Thus, the ``maximal unit'' is equivalent to $\BB$ giving
$$n_{\max} \sim \ffrac{1}{2}\Big[\ffrac{\log \BB}{\log t_0} -1 \Big]. $$

So, we have to estimate the sums 
$\sum_{t \in [1,\Bb]} \ffrac{1}{2} \Big[\ffrac{\log \BB}{\log t} -1 \Big]$,
where $\log(\Bb) \sim \ffrac{1}{3}\log(\BB)$.

Of course there will be repetitions in the sum, but a more precise estimation
is not necessary and we obtain an upper bound:
$$\BD \sim \sum_{t \in [1,\Bb]} \frac{1}{2}\Big[\frac{\log \BB}{\log t} -1 \Big]
\sim \log \Bb  \sum_{t \in [1,\Bb]} \frac{1}{\log t} \sim \log \Bb \cdot \frac{\Bb}{\log \Bb} 
\sim \Bb = \BB^\frac{1}{3}. $$

(ii) In the case of norm $1$, the list ${\sf LM}$ is relative to the fundamental units
of norm $1$ with all its powers (some are the squares of the fundamental units
of norm $-1$); the reasoning is the same, replacing $\frac{1}{3}$ by $\frac{1}{2}$.
\end{proof}

\subsection{Unlimited lists of fundamental integers of norm \texorpdfstring{$s \nu$, $\nu \geq 2$}{Lg}}

The \fop algo\-rithm always give lists of results, but contrary to units, some norms
$s\nu$ do not exist in a given field $K$; in other words, the \fop only give 
suitable Kummer radicals since $s\nu$ is given. Recall the well known:

\begin{theorem}\label{unicity}
Let $s \in \{-1,1\}$ and $\nu \in \Z_{\geq 2}$ be given. 

\smallskip
(i) A fundamental solution of the norm equation $u^2 - M v^2 = 4 s \nu$ 
(Definition \ref{defnu}) does exist if and only if there exists an integer principal 
ideal ${\mathfrak a}$ of absolute norm $\nu$ with a generator $\alpha \in \ZK^+$ 
whose norm is of sign $s$. 

\smallskip
Under the existence of ${\mathfrak a} = (\alpha)$, with $\Norm(\alpha) = s'\nu$,
a representative $\alpha \in \ZK^+$, modulo $\langle \varepsilon_M^{} \rangle$, does
exist whatever $s$ as soon as $\Sgn=-1$; if $\Sgn=1$, 
a fundamental solution $\alpha \in \ZK^+$ does exist if and only if $s'=s$.

\smallskip
(ii) When the above conditions are fulfilled, the fundamental solution corresponding 
to the ideal ${\mathfrak a}$ is unique (in the meaning that two generators of ${\mathfrak a}$
in $\ZK^+$, having same trace, are equal) and found by the \fop algorithm.
\end{theorem}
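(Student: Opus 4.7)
The plan is to treat existence and uniqueness separately, with the \fop detection following directly from the uniqueness argument.

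For part (i), a fundamental solution is by definition an element of $\ZK^+$ with norm $s\nu$ of minimal trace, so its existence is equivalent to the set of such elements being nonempty, which is the stated condition on $\mathfrak a$. To settle the sign dichotomy I start from any generator $\beta$ of $\mathfrak a$: the four sign/conjugation transforms $\{\pm\beta,\pm\bar\beta\}$ share a common norm $s'\nu$, and exactly one among them has both coefficients $u,v \geq 1$, hence lies in $\ZK^+$ (the degenerate configurations $u=0$ or $v=0$ correspond to norms of the form $q^2$ or $-Mq^2$, excluded in Section~1.2). If $\Sgn=+1$ every unit has norm $+1$, so $s'$ is an invariant of $\mathfrak a$ and $s'=s$ is necessary. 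If $\Sgn=-1$, the unit $\varepsilon_M$ flips the norm sign, so replacing $\beta$ by $\beta\varepsilon_M$ when needed and then selecting the right associate in $\ZK^+$ realizes either choice of $s$.

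For part (ii) uniqueness, suppose $\alpha,\alpha' \in \ZK^+$ both generate $\mathfrak a$ with common norm $s\nu$ and common trace. Then $\alpha' = \alpha\varepsilon$ for a unit $\varepsilon$ with $\Norm(\varepsilon)=+1$, hence $\varepsilon = \pm\eta^n$ where $\eta := \varepsilon_M$ if $\Sgn=+1$ and $\eta := \varepsilon_M^2$ if $\Sgn=-1$. The sign $\varepsilon = -\eta^n$ is excluded since it would force $\alpha'<0$. Using $\eta\bar\eta = 1$, the equality of traces $\alpha\eta^n + \bar\alpha/\eta^n = \alpha + \bar\alpha$, after clearing denominators by $\eta^n$, rearranges to
\[(\eta^n - 1)(\alpha\eta^n - \bar\alpha) = 0.\]
The first factor vanishes only at $n=0$. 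If the second factor vanished, $\alpha' = \alpha\eta^n$ would equal $\bar\alpha$; but the coefficient of $\sqrt M$ in $\alpha \in \ZK^+$ is positive ($v\geq 1$), whereas in $\bar\alpha$ it is $-v \leq -1$, contradicting $\alpha' \in \ZK^+$. Hence $n=0$ and $\alpha'=\alpha$.

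For the \fop detection, each $t$ with $m_{s\nu}(t) = t^2 - 4s\nu = M(t)r(t)^2$ (and $M(t)$ square-free) yields $\alpha(t) := \frac{1}{2}(t + r(t)\sqrt{M(t)}) \in \ZK^+$ of norm $s\nu$ in $K := \Q(\sqrt{M(t)})$. If $t$ is a first occurrence of $M := M(t)$ but $\alpha(t)$ is not the fundamental integer of $\mathfrak a := (\alpha(t))$, there exists $\alpha_0 \in \ZK^+$ generating $\mathfrak a$ with norm $s\nu$ and $\Trace(\alpha_0) =: t_0 < t$; the relation $t_0^2 - 4s\nu = M r_0^2$ (same Kummer radical $M$, since $\alpha_0 \in K$) makes $t_0$ an earlier occurrence of $M$, a contradiction. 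The main obstacle is establishing the identity $(\eta^n-1)(\alpha\eta^n-\bar\alpha)=0$ and using the positivity defining $\ZK^+$ to exclude its nontrivial factor; the rest reduces to routine bookkeeping with units and signs.
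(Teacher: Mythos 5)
Your proof is correct and takes essentially the same route as the paper: the existence part is the same solution--ideal correspondence, and your identity $(\eta^n-1)(\alpha\eta^n-\bar\alpha)=0$ is exactly the paper's relation $\alpha^2\varepsilon_M^n(\varepsilon_M^n-1)=(\varepsilon_M^n-\Sgn^n)s\nu$ once the norm comparison has forced $\Sgn^n=1$, ending in the same contradiction $\alpha'=\bar\alpha\notin\ZK^+$. Your only departures are organizational: you normalize to the totally positive unit $\eta$ up front, absorbing the paper's inline $\Sgn^n$ case analysis, and you make explicit the sign dichotomy of part (i) and the first-occurrence/minimal-trace argument, which the paper's proof treats as immediate.
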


\begin{proof}
(i) If ${\mathfrak a} = (\alpha)$, of absolute norm $\nu$, with $\alpha = 
\frac{1}{2}(u+v \sqrt M) \in \ZK^+$, one obtains $u^2 -  M v^2 = 4s\nu$ for a 
suitable $s \in \{-1,1\}$ giving a solution with $t=u$; then $m_{s\nu}(t) = 
t^2 - 4s\nu =  M(t) r^2$, whence $M=M(u)$ and $r=v$.

\smallskip
Reciprocally, assume that the corresponding equation (in unknowns $t \geq 1$, 
$s = \pm 1$) $t^2 - 4 s\nu = M r^2$, $M \geq 2$ square-free, has a solution, whence 
$t^2 -  M r^2 = 4 s \nu$. Set $\alpha := \frac{1}{2}(t+r \sqrt M) \in \Z_K^+$; then one 
obtains the principal ideal ${\mathfrak a} = (\alpha)\ZK$ of absolute norm~$\nu$.

\smallskip
(ii) Assume that $\alpha$, $\beta$ are two generators of ${\mathfrak a}$ in $\ZK^+$ 
with common trace $t \geq 1$. Put $\beta = \alpha \cdot \varepsilon_M^n$, 
$n \in \Z$, $n \ne 0$. Then: 
\[\Trace(\beta) = \alpha \cdot \varepsilon_M^n + \alpha^\sigma \cdot \varepsilon_M^{n\sigma}
= \ffrac{\alpha^2 \cdot \varepsilon_M^{2n} + s \Sgn^n \,\nu}{\alpha \cdot \varepsilon_M^{n}},\ \ 
\  \Trace(\alpha) = \ffrac{\alpha^2 + s\,\nu}{\alpha}; \]
thus $\Trace(\beta) = \Trace(\alpha)$ is equivalent to
$\alpha^2 \cdot \varepsilon_M^{2n} + s\Sgn^n \,\nu = 
\alpha^2 \cdot \varepsilon_M^{n} + s \,\nu \varepsilon_M^{n}$, whence to:
\[ \alpha^2 \cdot \varepsilon_M^{n} (\varepsilon_M^{n} - 1) = (\varepsilon_M^{n} - \Sgn^n) s\,\nu. \]

The case $\Sgn^n = -1$ is not possible since $\Norm(\beta) = \Norm(\alpha) = 
\Norm(\varepsilon_M^n) = \Sgn^n$; so, $\Sgn^n = 1$, in which case, one gets
$\alpha^2 \cdot \varepsilon_M^{n} = s\,\nu = s\,\alpha^{1+\sigma}$, thus
$\alpha^{\sigma} = \alpha \cdot \varepsilon_M^{n}$ and $\beta = \alpha^{\sigma}$,
but in that case, $\beta \notin \ZK^+$ (absurd). Whence the unicity.
\end{proof}

\begin{remark}{\rm 
Consider the above case where $\alpha$ and $\beta$ are two generators of 
${\mathfrak a}$ in $\ZK^+$ with common trace $t \geq 1$ and norm $s\nu$.
Thus, we have seen that $\beta = \alpha^{\sigma} = \alpha \cdot \varepsilon_M^{n}$.
The ideal ${\mathfrak a} = (\alpha)$ is then invariant by $G := {\rm Gal}(K/\Q)$, so 
it is of the form ${\mathfrak a} = (q) \times \prod_{p \mid D} {\mathfrak p}^{e_p}$,
where $q \in \Z$, $D$ is the discriminant of $K$, ${\mathfrak p}^2 = p\ZK$
and $e_p \in \{0,1\}$.
In other words, we have to determine the principal ideals, products of distinct
ramified prime ideals. This is done in details in \cite[\S\S\,2.1,\,2.2]{Gra10}

\smallskip \noindent
For instance, let $M=15$ and $s\nu = -6$. One has the fundamental solution $\alpha = 3 + \sqrt{15}$ 
of norm $-6$, with the trace $t=6$; then $\alpha^{\sigma} = 3 - \sqrt{15} = \alpha \cdot (-4+\sqrt{15})
= \alpha \cdot (- \varepsilon_M^{-1})$; similarly, for $s\nu = 10$, one has the fundamental
solution $\alpha = 5 + \sqrt{15}$ of norm $10$, with trace $t = 10$ and the relation
$\alpha^{\sigma} = 5 - \sqrt{15} = \alpha \cdot (4-\sqrt{15}) = \alpha \cdot (\varepsilon_M^{\sigma})$.
These fundamental solutions are indeed given by the \fop algorithm by means of the
data ${\sf [M,t]}$:

\ft\begin{verbatim}
s.Nu=-6
[M,t]=
[1,1],[6,24],[7,2],[10,4],[15,6],...
s.Nu=10
[M,t]=
[-39,1],[-31,3],[-15,5],[-6,4],[-1,2],[1,7],[6,8],[10,20],[15,10],...
\end{verbatim}}
\end{remark}

Depending on the choice of the polynomials $m_{-1}(t)$ or $m_1(t)$, consider for 
instance, the \fop algorithm applied to $M=13$ (for which $\Sgn=-1$), $\nu=3$, gives
with $m_{-1}(t)$ the solution ${\sf [M=13,t=1]}$ whence $\alpha = \frac{1}{2}(1+ \sqrt{13})$
of norm $-3$; with $m_1(t)$ it gives ${\sf [M=13,t=5]}$, $\alpha = \frac{1}{2}(5+ \sqrt{13})$
of norm $3$; the traces $1$ and $5$ are minimal for each case.
We then compute that $\frac{1}{2}(5+ \sqrt{13}) = \frac{1}{2}(1- \sqrt{13}) (-\varepsilon_{13}^{})$.

\smallskip
But with $M=7$ (for which $\Sgn=1$), the \fop algorithm with $m_{-1}(t)$ and $\nu=3$
gives ${\sf [M=7,t=4]}$ but nothing with $m_1(t)$.

\begin{remark}{\rm 
A possible case is when there exist several principal integer ideals ${\mathfrak a}$
of absolute norm $\nu \Z$ (for instance when $\nu = q_1 q_2$ is the product of two 
distinct primes and if there exist two prime ideals ${\mathfrak q}_1$, ${\mathfrak q}_2$, 
of degree $1$, over  $q_1, q_2$, respectively, such that 
${\mathfrak a} := {\mathfrak q}_1{\mathfrak q}_2$ 
and ${\mathfrak a}' := {\mathfrak q}_1 {\mathfrak q}_2^{\sigma}$ are principal). 
Let ${\mathfrak a} =: (\alpha)$ and ${\mathfrak a}'=:(\alpha')$ of absolute norm $\nu$.
We can assume that, in each set of generators, $\alpha$ and $\alpha'$ have
minimal trace $u$ and $u'$, and necessarily we have, for instance, $u' > u$; 
since the ideals ${\mathfrak a}$ are finite in number, there exists an ``absolute''
minimal trace $u$ defining the unique fundamental solution which is that found
by the suitable \fop algorithm.

\smallskip
For instance, let $s=-1$, $\nu=15$; the \fop algorithm gives the solution
${\sf [19,4]}$, whence $\alpha = 2+\sqrt{19}$ of norm $-15$. In $K=\Q(\sqrt {19})$
we have prime ideals ${\mathfrak q}_3 = (4+\sqrt{19}) \mid 3$, 
${\mathfrak q}_5 = (9+2\sqrt{19}) \mid 5$. Then we obtain the fundamental
solution with ${\mathfrak a} = {\mathfrak q}_3^{\sigma}\,{\mathfrak q}_5$, while
${\mathfrak q}_3\,{\mathfrak q}_5 = (74+17\sqrt{19})$. The fundamental unit
is $\varepsilon_M^{} = 170+39\sqrt{19}$ of norm $\Sgn=1$ and one computes
some products $\pm \alpha \varepsilon_M^n$ giving a minimal trace with $n=-1$
and the non-fundamental solution $17+4\sqrt{19}$.}
\end{remark}

If $\nu = \prod_{q \mid \nu} q^{n_q}$, where 
$q$ denotes distinct prime numbers, there exist integer ideals ${\mathfrak a}$
of absolute norm $\nu \Z$ if and only if, for each inert 
$q \mid \nu$ then $n_q$ is even. In the \fop algorithm
this will select particular Kummer radicals $M$ for which each $q \mid \nu$,
such that $n_q$ is odd, ramifies or splits in $K=\Q(\sqrt M)$; this is
equivalent to $q \mid D$ (the discriminant of $K=\Q(\sqrt M)$) or to
$\rho_q := \big(\frac{M}{q} \big)=1$ in terms of quadratic residue symbols;
if so, we then have ideal solutions $\Norm({\mathfrak a}) = \nu \Z$.

\smallskip
Let's write, with obvious notations ${\mathfrak a} = \prd_{q,\,\\_q=0} {\mathfrak q}^{n_q} 
\prd_{q,\,\rho_q=-1} {\mathfrak q}^{2n'_q} \prd_{q,\,\rho_q=1} {\mathfrak q}^{n'_q} 
{\mathfrak q}^{n''_q \sigma}$.
Then the equation becomes $\Norm({\mathfrak a}') = \nu'\Z$ for another 
integral ideal ${\mathfrak a}'$ and another $\nu' \mid \nu$, where ${\mathfrak a}'$ 
is an integer ideal ``without any rational integer factor''. Thus, $\Norm(\alpha') 
= s\nu'$ is equivalent to ${\mathfrak a}' = \alpha' \ZK$. This depends on relations in 
the class group of $K$ and gives obstructions for some Kummer radicals $M$. 
Once a solution ${\mathfrak a}'$ principal exists (non unique) we can apply 
Theorem \ref{unicity}.

\subsubsection{Program for lists of quadratic integers of norm $\nu \geq 2$}

The program for units can be modified by choosing an integer 
$\nu \geq 2$, a sign $s \in \{-1, 1\}$ and the polynomial $m_{s\nu}(t) = t^2-4 s \nu$ 
(outputs ${\sf [M(t),t]}$):

\smallskip
\ft\begin{verbatim}
MAIN PROGRAM FOR FUNDAMENTAL INTEGERS OF NORM s.nu
{B=10^6;s=1;nu=2;LM=List;for(t=1,B,mt=t^2-4*s*nu;M=core(mt);L=List([M,t]);
listput(LM,vector(2,c,L[c])));VM=vecsort(vector(B,c,LM[c]),1,8);
print(VM);print("#VM = ",#VM)}
\end{verbatim}\ns

(i) $s=1$, $\nu=2$.
${}$
\ft\begin{verbatim}
[M,t]=
[-7,1],[-1,2],[1,3],
[2,4],[7,6],[14,8],[17,5],[23,10],[31,78],[34,12],[41,7],[46,312],[47,14],[62,16],[71,118],
[73,9],[79,18],[89,217],[94,2928],[97,69],[103,954],[113,11],[119,22],[127,4350],[137,199],
[142,24],[151,83142],[158,176],[161,13],[167,26],[191,5998],[193,56445],[194,28],
[199,255078],[206,488],[217,15],[223,30],[233,6121],[238,216],
(...)
[999986000041,999993],[999990000017,999995],[999994000001,999997],[999997999993,999999]
#VM = 999909
\end{verbatim}\ns

(ii) $s=-1$, $\nu=3$.
${}$
\ft\begin{verbatim}
[M,t]=
[1, 2],
[3,6],[7,4],[13,1],[19,8],[21,3],[31,22],[37,5],[39,12],[43,26],[57,30],[61,7],[67,16],
[73,34],[91,38],[93,9],[97,1694],[103,20],[109,73],[111,42],[127,586],[129,318],
[133,11],[139,448],[151,172],[157,50],[163,1864],[181,13],[183,54],[193,379486],
[199,28],[201,1758],[211,58],[217,766],[237,15],[241,62],[247,220],[259,32],[271,428],
(...)
[999986000061,999993],[999990000037,999995],[999994000021,999997],[999998000013,999999]
#VM = 999866
\end{verbatim}\ns

\smallskip
Consider the output ${\sf [93,9]}$ ($M = 3 \cdot 31$, $t=9$, $r=1$); then
$\alpha = A_{-3}(9) = \frac{1}{2}(9+ \sqrt {3 \cdot 31})$ of norm $-3$ with ramified prime $3$;
it is indeed the minimal solution since the equation reduces to $3x'^2+4 = 31 y^2$
with minimal $x'=3$, then minimal trace $x=9$.

\smallskip
For the output ${\sf [193,379486]}$, $\alpha = A_{-3}(379486) =
\frac{1}{2}(379486+ 27316 \sqrt {193})$ of norm $-3$; this is the minimal solution
despite of a large trace, but $\varepsilon_{193}^{} = \frac{1}{2}(1764132 + 126985 \sqrt {193})$
is very large and cannot intervene to decrease the size.

\smallskip
(iii) $s=1$, $\nu=15$.
${}$
\ft\begin{verbatim}
[M,t]=
[-59,1],[-51,3],[-35,5],[-14,2],[-11,4],[-6,6],[1,8],
[10,10],[21,9],[34,14],[61,11],[66,18],[85,20],[106,22],[109,13],[129,24],[154,26],
[165,15],[181,28],[201,312],[210,30],[229,17],[241,32],[265,1400],[274,34],[301,19],
[309,36],[346,38],[349,131],[354,414],[381,21],[385,40],[394,278],[409,41216],[421,3919],
(...)
[999982000021,999991],[999985999989,999993],[999993999949,999997],[999997999941,999999]
#VM = 999815
s=-1  nu=15
[M,t]=
[1,2],
[6,6],[10,10],[15,30],[19,4],[31,8],[34,22],[46,26],[51,12],[61,1],[69,3],[79,16],[85,5],
[94,38],[106,82],[109,7],[114,42],[115,20],[139,94],[141,9],[151,98],[159,24],[166,206],
[181,11],[186,54],[190,110],[199,536],[211,28],[214,58],[229,13],[241,52658],[249,126],
[265,130],[271,32],[274,1258],[285,15],[310,70],[331,7714],[334,146],[339,36],
(...)
[999986000109,999993],[999990000085,999995],[999994000069,999997],[999998000061,999999]
#VM = 999782
\end{verbatim}\ns

\smallskip
For instance, ${\sf [85,5]}$ illustrates Theorem \ref{unicity} with the
solution $\alpha =\frac{1}{2} (5+\sqrt{85})$ of norm $-15$, with
$(\alpha)\ZK = {\mathfrak q}_3 {\mathfrak q}_5$,
where $3$ splits in $K$ and $5$ is ramified; one verifies that the ideals
${\mathfrak q}_3$ and ${\mathfrak q}_5$ are non-principal, but their product
is of course principal. For this, one obtains the following PARI/GP verifications:

\smallskip
\ft\begin{verbatim}
k=bnfinit(x^2-85)
k.clgp=[2,[2],[[3,1;0,1]]]
idealfactor(k,3)=[[3,[0,2]~,1,1,[-1,-1]~]1],[[3,[2,2]~,1,1,[0,-1]~]1]
idealfactor(k,5)=[[5,[1,2]~,2,1,[1,2]~]2]
bnfisprincipal(k,[3,[2,2]~,1,1,[0,-1]~])=[[1]~,[1,0]~]
bnfisprincipal(k,[5,[1,2]~,2,1,[1,2]~])=[[1]~,[1,1/3]~]
A=idealmul(k,[3,[2,2]~,1,1,[0,-1]~],[5,[1,2]~,2,1,[1,2]~])
bnfisprincipal(k,A)=[[0]~,[2,-1]~]
nfbasis(x^2-85)=[1,1/2*x-1/2]
\end{verbatim}\ns

\smallskip
The data ${\sf [[0],[2,-1]]}$ gives the principality with generator 
${\sf [2,-1]}$ denoting (because of the integral basis $\{1, \frac{1}{2}x -\frac{1}{2}\}$ 
used by PARI), $2-\big[ \frac{1}{2}\sqrt{85}-\frac{1}{2} \big]= \frac{1}{2}(5-\sqrt{85}) =
\alpha^\sigma$.

\smallskip
(iv) $s=-1$, $\nu=9 \times 25$.
${}$
\ft\begin{verbatim}
[M,t]=
[1,16],
[2,30],[5,15],[10,10],[13,20],[17,120],[26,6],[29,12],[34,18],[37,5],[41,24],[53,105],
[58,70],[61,25],[65,240],[73,80],[74,42],[82,270],[85,35],[89,48],[97,1280],[101,3],
[106,54],[109,9],[113,23280],[122,330],[130,110],[137,52320],[145,360],[146,66],
[149,21],[157,55],[170,390],[173,195],[178,130],[181,27],[185,2040],
(...)
[999966001189,999983],[999978001021,999989],[999982000981,999991],[999994000909,999997]
#VM = 999448
\end{verbatim}\ns

The case ${\sf [37,5]}$ may be interpreted as follows:
$m_{-1}(5)=5^2+4\cdot 9 \cdot 25= 5^2\cdot 37$, whence
$A_{-1}(5)=\frac{1}{2}(5+ 5 \sqrt{37}) = 5 \cdot \frac{1}{2}(1+  \sqrt{37}) =: 5 B$, 
where $B := \frac{1}{2}(1+ \sqrt{37})$ is of norm $-9$ and $5$ is indeed inert in $K$.
Thus $\frac{1}{2}(1+  \sqrt{37}) \ZK$ is the square of a prime ideal 
${\mathfrak q}_3$ over $3$. 
The field $K$ is principal and we compute that
${\mathfrak q}_3 = \frac{1}{2}(5 \pm \sqrt{37}) \ZK$, ${\mathfrak q}_3^2 
= \frac{1}{2}(31 \pm 5 \sqrt{37}) \ZK$. So, $B \ZK = \frac{1}{2}(1+ \sqrt{37}) \ZK=
\frac{1}{2}(31 + 5 \sqrt{37}) \ZK$ or $\frac{1}{2}(31 - 5 \sqrt{37}) \ZK$.
We have $\varepsilon_{37}^{} = 6+ \sqrt{37}$ and we obtain that
$B=\frac{1}{2}(31- 5 \sqrt{37})\cdot \varepsilon_{37}^{}$,
showing that $\alpha = 5 \cdot \frac{1}{2}(1+  \sqrt{37})$ is the fundamental solution of 
the equation $\Norm(\alpha) = 3^2 \cdot 5^2$ with minimal trace $5$.

\smallskip
For larger integers $\nu$, fundamental solutions are obtained easily, as shown
by the following example with the prime $\nu = 1009$:

\smallskip
(v) $s=-1$, $\nu=1009$.
\ft\begin{verbatim}
[M,t]=
[2,14],[5,13],[10,102],[29,100],[37,21],[41,8],[58,42],[74,58],[101,305],[109,1617],
[113,656],[137,2504],[157,108],[173,17],[185,1168],[197,259],[202,35958],[205,33],
[209,4192],[218,854],[241,380808],[253,681],[269,620],[290,158],[313,384],[314,2090],
[317,1316],[337,6792],[341,67],[353,16496],[370,1422],[394,86742],
(...)
[999986004085,999993],[999990004061,999995],[999994004045,999997],[999998004037,999999]
#VM = 999664
\end{verbatim}\ns

\smallskip
We finish with a highly composed number $\nu$, not obvious for a calculation by hand:

\smallskip
(vi) $s=1$, $\nu=2 \times 3 \times 5 \times 7$.
${}$
\ft\begin{verbatim}
[M,t]=
[-839,1],[-831,3],[-815,5],[-791,7],[-759,9],[-719,11],[-671,13],[-615,15],[-551,17],
[-479,19],[-399,21],[-311,23],[-215,25],[-209,2],[-206,4],[-201,6],[-194,8],[-185,10],
[-174,12],[-161,14],[-146,16],[-129,18],[-111,27],[-110,20],[-89,22],[-66,24],[-41,26],
[-14,28],[1,29],[15,30],[46,32],[79,34],[114,36],[151,38],[190,40],[226,332],[231,42],
[249,33],[274,44],[319,46],[366,48],[385,35],[415,50],[466,52],[511,2758],[519,54],
[526,872],[574,56],[609,273],[610,4100],[631,58],[679,574],[681,39],[690,60],[721,511],
[751,62],[814,64],[834,636],[865,1265],[879,66],[919,2486],[946,68],[991,30158],[1009,43],
(...)
[999985999209,999993],[999989999185,999995],[999993999169,999997],[999997999161,999999]
#VM = 999715
\end{verbatim}\ns

We have not dropped the negative radicals meaning, for instance with $M=-839$,
that a solution of the norm equation does exist in $\Q(\sqrt {-839})$ with
$\alpha = \frac{1}{2}(1+ \sqrt{-839})$, or with $M=-14$ giving $\alpha = 14+\sqrt{-14}$.

\section{Universality of the polynomials \texorpdfstring{$m_{s \nu}$}{Lg}}\label{section5}
Let's begin with the following obvious result making a link with polynomials $m_{s \nu}$.

\begin{lemma}\label{trace}
Let $M \geq 2$ be a square-free integer and $K=\Q(\sqrt M)$; then, any $\alpha \in \ZK^+$
is characterized by its trace $a \in \Z$ and its norm $s \nu$, $s \in \{-1,1\}$, 
$\nu \in \Z_{\geq 1}$; from these data, $\alpha = \frac{1}{2}(a+ b \sqrt M)$
where $b$ is given by $m_{s \nu}(a) =: M b^2$.
\end{lemma}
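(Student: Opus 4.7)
The plan is essentially to unwind the definitions already established in the section on quadratic integers. Given $\alpha \in \ZK^+$, the definition of $\ZK^+$ forces the writing $\alpha = \frac{1}{2}(a + b\sqrt{M})$ with $a, b \in \Z_{\geq 1}$ of the same parity (and both odd only if $M \equiv 1 \pmod 4$). The trace is $\Trace(\alpha) = a$ and the norm is $\Norm(\alpha) = \frac{1}{4}(a^2 - M b^2)$; so the data $\Norm(\alpha) = s\nu$ translates directly into the relation
\[ a^2 - M b^2 = 4 s \nu, \]
which rewrites as $M b^2 = a^2 - 4 s \nu = m_{s\nu}(a)$.

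From here I would simply observe that, since $M$ is fixed and square-free and since $b \in \Z_{\geq 1}$, the value $b$ is the unique positive integer satisfying $b^2 = m_{s\nu}(a)/M$. Hence $b$ is determined by the pair $(a, s\nu)$, and so $\alpha = \frac{1}{2}(a + b\sqrt{M})$ is determined as well. Conversely, the formula $m_{s\nu}(a) = M b^2$ is exactly the integrality/norm condition that makes $\alpha$ belong to $\ZK^+$.

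There is essentially no obstacle: the statement is a direct reformulation of the norm equation recalled earlier, once one uses the convention fixing positive coefficients on the basis $\{1, \sqrt M\}$. The only small point worth emphasizing is that the parity condition on $(a,b)$ is automatically compatible with $a^2 - M b^2 \equiv 0 \pmod 4$, so no extra check is needed beyond recalling the discussion preceding Definition \ref{defnu}.
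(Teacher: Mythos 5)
Your proof is correct and takes essentially the same route as the paper: the paper merely phrases the unwinding through the minimal polynomial $\alpha^2 - a\alpha + s\nu = 0$, solving to get $\alpha = \frac{1}{2}\big(a + \sqrt{a^2-4s\nu}\big)$ and invoking uniqueness of the Kummer radical, which amounts to exactly your observation that $m_{s\nu}(a) = M b^2$ determines $b$ as the unique positive integer with $b^2 = m_{s\nu}(a)/M$ since $M$ is square-free. Your added remark on the parity compatibility is harmless and consistent with the conventions fixed before Definition \ref{defnu}.
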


\begin{proof}
From the equation $\alpha^2- a \alpha + s\nu=0$, we get
$\alpha = \frac{1}{2} (a + \sqrt{a^2-4 s\nu})$, where necessarily 
$a^2-4 s\nu =: M b^2$ (unicity of the Kummer radical) giving $b>0$ 
from the knowledge of $a$ and $s\nu$.
\end{proof}

\subsection{Mc Laughlin's polynomials}
Consider some polynomials that one finds in the literature; for instance that
of Mc Laughlin \cite{McL} obtained from ``polynomial continued fraction 
expansion'', giving formal units, and defined as follows.

\smallskip
Let $m \geq 2$ be a given square-free integer and let $E_m = u + v \sqrt{m}$, 
$u, v \in \Z_{\geq 1}$, be the fundamental solution of the norm equation 
(or Pell--Fermat equation)
$u^2-mv^2=1$ (thus, $E_m = \varepsilon_m^{n_0}$, $n_0 \in \{1,2,3,6\}$). 
For such $m, u, v$, each of the data below leads to the {\it fundamental 
polynomial solution} of the norm equation $U(t)^2 - m(t) V(t)^2 = 1$ (see 
\cite[Theorems 1--5]{McL}), giving the parametrized units $E_{M(t)} = U(t)+ V(t) r \sqrt {M(t)}$, 
of norm $1$ of $\Q(\sqrt {M(t)})$, where $m(t) =: M(t) r(t)^2$, $M(t)$ square-free.

\smallskip
The five polynomials $m(t)$ are:
\ft\begin{equation*}
\left\{\begin{aligned}
mcl_1(t)&=v^2 t^2 + 2 u t + m, \\
& U(t) =v^2 t+u,\  V(t) = v ; \\
mcl_2(t)&=(u - 1)^2 \big(v^2 t^2 + 2 t \big)+ m, \\
&U(t) =(u - 1) \big(v^4 t^2 + 2 v^2 t \big) + u, \ V(t) =v^3 t + v ; \\
mcl_3(t)&=(u + 1)^2 \big(v^2 t^2 + 2 t \big) + m, \\
& U(t) =(u + 1) \big(v^4 t^2 + 2 v^2 t\big) + u,\ V(t)=v^3 t + v ; \\
mcl_4(t)&=(u + 1)^2 v^2 t^2 + 2(u^2 - 1) t + m, \\
& U(t)  =\ffrac{(u + 1)^2}{u-1} v^4 t^2+2(u + 1) v^2 t + u,\  V(t) =\ffrac{u + 1}{u-1} v^3 t + v ; \\
mcl_5(t)&=(u-1)^2\big (v^6 t^4 + 4v^4 t^3 + 6 v^2 t^2 \big)  + 2(u-1)(2u-1) t + m,  \\
& U(t) =(u-1) \big( v^6 t^3 + 3 v^4 t^2 + 3 v^2 t \big) + u, \ V(t)= v^3 t +v.
\end{aligned}\right.
\end{equation*}\ns

Note that for $mcl_1(t)$ one may also use a unit $E_m = u + v \sqrt{m}$ 
of norm $-1$ since $U(t)^2 - mcl_1(t) V(t)^2 = u^2-m v^2$, which is not 
possible for the other polynomials.

\smallskip
We may enlarge the previous list with cases where the coefficients of 
$E_m$ may be half-integers defining more general units 
(as $\varepsilon_5^{}$, $\varepsilon_{13}^{}$ of norm $-1$ in the case of 
$mcl_1(t)$, then as $\varepsilon_{21}^{}$ of norm $1$ for the other $mcl(t)$). 
This will give $E_m=\varepsilon_m^{}$ or $\varepsilon_m^2$.

\smallskip
So we have the following transformation of the $mcl(t)$,
$U(t)$, $V(t)$, that we explain with $mcl_1(t)$.
The polynomial $mcl_1(t)$ fulfills the condition $U(t)^2 - mcl_1(t) V(t)^2 =
u^2 - m v^2$, which is the norm of $E_m = u + v \sqrt m$; 
so we can use any square-free integer $m \equiv 1 \pmod 4$ 
such that $E_m = \frac{1}{2} \big (u + v \sqrt m \big )$, $u, v \in \Z_{\geq 1}$ odd, and
we obtain the formal unit $E_{M(t)} = \frac{1}{2} \big (U(t) + V(t) \sqrt {mcl_1(t)} \big )$ 
under the condition $t$ even to get $U(t), V(t) \in \Z_{\geq 1}$. 
This gives the polynomials $mcl_6(t) = v^2 t^2 +2ut +m$ and the coefficients 
$U(t) = \frac{1}{2}(v^2 t + u)$, $V(t) =  \frac{1}{2}v$ of a new unit, with 
$mcl_6(t) = M(t) r(t)^2$, for all $t \geq 0$,. 

\smallskip
For the other $mcl(t)$ one applies the maps 
$t \mapsto 2t$, $t \mapsto 4t$, depending on the degrees; 
so we obtain the following list, where the
resulting unit is $E_{M(t)} = U(t) + V(t) \sqrt {m(t)}$, 
of norm $\pm 1$, under the conditions $m \equiv 1 \pmod 4$ and 
$\varepsilon_m^{} = \frac{1}{2}(u + v \sqrt m)$, $u,v$ odd:
\ft\begin{equation*}
\left\{\begin{aligned}
mcl_6(t)&=v^2 t^2 + 2u t + m, \\
& U(t) =\ffrac{1}{2}(v^2 t + u), \  V(t)= \ffrac{1}{2} v ; \\
mcl_7(t)&=(u-2)^2\big (v^2 t^2 + 2 t\big) + m, \\
& U(t) =\ffrac{1}{2} \big((u-2)(v^4 t^2 + 2v^2 t) + u \big),
\ V(t)=\ffrac{1}{2} \big(v^3 t + v \big) ; \\
mcl_8(t)&=(u + 2)^2\big( v^2 t^2 + 2 t\big) + m, \\
&U(t) =\ffrac{1}{2} \big((u + 2)( v^4 t^2 + 2v^2 t) + u \big), 
\ V(t)=\ffrac{1}{2} \big(v^3 t + v \big) ; \\
mcl_9(t)&=(u + 2)^2 v^2 t^2 + 2(u^2 - 4) t + m, \\
&U(t) =\ffrac{1}{2} \Big(\ffrac{(u + 2)^2}{u-2} v^4 t^2+2(u + 2) v^2 t + u \Big),
\ V(t)=\ffrac{1}{2} \Big(\ffrac{u + 2}{u-2} v^3 t + v\Big) ; \\
mcl_{10}(t)&=(u-2)^2 \big (v^6 t^4 + 4 v^4 t^3 + 6 v^2 t^2\big)  + 4(u-2)(u-1) t + m, \\
& U(t) =\ffrac{1}{2} \big((u-2) (v^6 t^3 + 3 v^4 t^2 + 3 v^2 t) + u \big),
\ V(t)=\ffrac{1}{2} \big(v^3 t +v \big).
\end{aligned}\right.
\end{equation*}\ns

\subsection{Application to finding units}
In fact, these numerous families of parametrized units are
nothing but the units $E_s(T) = \frac{1}{2}(T+\sqrt{T^2 - 4 s})$
when the parameter $T = U(t)$ is a given polynomial expression. This explain
that the properties of the units $E_s(T)$ are similar to that of the two
universal units $E_s(t)$, for $t \in \Z_{\geq 1}$, but, a priori, the \fop algorithm 
does not give fundamental units when $T(t)$ is not a degree $1$
monic polynomial; nevertheless it seems that the algorithm gives most 
often fundamental units, at least for all $t \gg 0$. 

\smallskip
We give the following example, using for instance the Mc Laughlin polynomial 
$mcl_{10}(t)$ with $m=301$, $u=22745$, $v=1311$, corresponding to, 
$\varepsilon_m^{} = \frac{1}{2}(22745 + 1311 \sqrt{301})$ of norm $1$
(program of Section \ref{3}); this will give enormous units 
$E_{M(t)} =: \varepsilon_{M(t)}^n$.
The output is of the form ${\sf [M(t),r(t),n]}$.
Then there is no exception to $E_{M(t)} = \varepsilon_{M(t)}^{}$ (i.e., $n=1$); 
moreover, one sees many cases of non-square-free integers $mcl_{10}(t)$:

\smallskip
\ft\begin{verbatim}
Mc LAUGHLIN UNITS
{B=10^3;LN=List;LM=List;u=22745;v=1311;for(t=1,B,
mt=(u-2)^2*(v^6*t^4+4*v^4*t^3+6*v^2*t^2)+4*(u-2)*(u-1)*t+301;
ut=1/2*((u-2)*(v^6*t^3+3*v^4*t^2+3*v^2*t)+u);vt=1/2*(v^3*t+v);
C=core(mt,1);M=C[1];r=C[2];D=quaddisc(M);w=quadgen(D);
Y=quadunit(D);res=Mod(M,4);
if(res!=1,Z=ut+r*vt*w);if(res==1,Z=ut-r*vt+2*r*vt*w);
z=1;n=0;while(Z!=z,z=z*Y;n=n+1);L=List([M,r,n]);
listput(LM,vector(3,c,L[c])));VM=vecsort(vector(B,c,LM[c]),1,8);
print(VM);print("#VM = ",#VM);
for(k=1,#VM,n=VM[k][3];if(n>1,Ln=VM[k];
listput(LN,vector(3,c,Ln[c]))));Vn=vecsort(LN,1,8);
print("exceptional powers:",Vn)}
[M,r,n]=
[656527122296918386395032242,2,1],[1594671238615711306590405613,63245,1],
[6538031892707128354912512481,1400,1],[8374054846220987469202089646,14,1],
[13294653599300065679245260247,4,1],[17461037237177260272395675419,140,1],
[28515629817043220531451663970,7672,1],[42017686932862256394245096245,1,1],
(...)
[2626102383534535069268098426753041168301,1,1]
#VM = 1000
exceptional powers : List([])
\end{verbatim}\ns

Using the Remark \ref{MB}, with $\BB=10^3$ and $m(t)$ of degree $4$, with 
leading coefficient:
$$a_4  \BB^4= (22745-2)^2 \cdot 1311^6 \cdot 10^{12}
= 2626102377422775499879732689000000000000, $$
one gets:
$$\log(2626102383534535069268098426753041168301)/\log(a_4 \BB^4)
\approx 1.000000000025...$$

\section{Non \texorpdfstring{$p$}{Lg}-rationality of quadratic fields}\label{S6}

\subsection{Recalls about \texorpdfstring{$p$}{Lg}-rationality}

Let $p \geq 2$ be a prime number. The definition of $p$-rationality of
a number field lies in the framework of abelian $p$-ramification theory.
The references we give in this article are limited to cover the subject and 
concern essentially recent papers; so the reader may look at the historical 
of the abelian $p$-ramification theory that we have given in \cite[Appendix]{Gra5},
for accurate attributions, from \v{S}afarevi\v{c}'s pioneering results, about the 
numerous approaches (class field theory, Galois cohomology, pro-$p$-group 
theory, infinitesimal theory); then use its references concerning developments of
this theory (from our Crelle's papers 1982--1983, Jaulent's infinitesimals \cite{Jau1}
(1984), Jaulent's thesis \cite{Jau2} (1986), Nguyen Quang Do's article \cite{Ng} 
(1986), Movahhedi's thesis \cite{Mov} (1988) and subsequent papers); all 
prerequisites and developments are available in our book \cite{Gra1} (2005).

\begin{definition}\label{defprat}
A number field $K$ is said to be $p$-rational if $K$ fulfills the Leopoldt conjecture at $p$
and if the torsion group ${\mathfrak T}_K$ of the Galois group of the maximal abelian 
$p$-ramified (i.e., unramified outside $p$ and $\infty$) pro-$p$-extension of $K$ is trivial.
\end{definition}

We will use the fact that, for totally real fields $K$, we have the formula:
\begin{equation}
\order {\mathfrak T}_K = \order {\mathcal C}'_K \cdot \order {\mathcal R}_K 
\cdot \order {\mathcal W}_K,
\end{equation}
where ${\mathcal C}'_K$ is a subgroup of the $p$-class group ${\mathcal C}_K$ 
and where ${\mathcal W}_K$ depends on local and global $p$-roots of unity; for 
$K=\Q(\sqrt M)$ and $p>2$, ${\mathcal C}'_K={\mathcal C}_K$ and ${\mathcal W}_K=1$ 
except if $p=3$ and $M \equiv -3 \pmod 9$, in which case ${\mathcal W}_K \simeq \Z/3\Z$. 
For $p=2$, ${\mathcal C}'_K={\mathcal C}_K$ except if $K(\sqrt 2)/K$ is unramified
(i.e., if $M=2M_1$, $M_1 \equiv 1 \pmod 4$).
Then ${\mathcal R}_K$ is the ``normalized $p$-adic regulator'' of $K$ (general definition 
for any number field in \cite[Proposition 5.2]{Gra4}). For $K=\Q(\sqrt M)$ and $p \ne 2$, 
$\order {\mathcal R}_K \sim \frac{1}{p} \log_p(\varepsilon_M^{})$; for $p=2$,
$\order {\mathcal R}_K \sim \frac{1}{2^d} \log_2(\varepsilon_M^{})$, where 
$d \in \{1,2\}$ is the number of prime ideals above $2$.

\smallskip
So $\order {\mathfrak T}_K$ is divisible by the order of ${\mathcal R}_K$, 
which gives a sufficient condition for the non-$p$-rationality of $K$.
Since ${\mathcal C}_K = {\mathcal W}_K = 1$ for $p \gg 0$, the $p$-rationality
only depends on ${\mathcal R}_K$ in almost all cases.

\begin{proposition}(\cite[Proposition 5.1]{Gra6})\label{vprk}
Let $K=\Q(\sqrt m)$ be a real quadratic field of fundamental unit $\varepsilon_m^{}$.
Let $p>2$ be a prime number with residue degree $f \in \{1,2\}$. 

(i) For $p \geq 3$ unramified in $K$, 
$v_p(\order {\mathcal R}_K) = v_p (\varepsilon_m^{p^f-1} - 1)-1$.

\smallskip
(ii) For $p > 3$ ramified in $K$, $v_p(\order {\mathcal R}_K) =
\frac{1}{2}(v_{\mathfrak p}(\varepsilon^{p-1} - 1) - 1)$, where ${\mathfrak p}^2 = (p)$.

\smallskip
(iii) For $p=3$ ramified in $K$, $v_3(\order {\mathcal R}_K)=
\frac{1}{2}(v_{\mathfrak p} (\varepsilon^6 - 1) -2-\delta)$,
where ${\mathfrak p}^2 = (3)$ and $\delta=1$ (resp. $\delta=3$) 
if $m\not\equiv -3 \pmod 9$ (resp. $m \equiv -3 \pmod 9$).
\end{proposition}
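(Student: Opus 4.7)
The plan is to convert, in each case, the valuation $v_p(\#\mathcal{R}_K)$ into a valuation of $\log_p \varepsilon_m$, and then extract the latter from an expression of the form $\varepsilon_m^n-1$ via the $p$-adic logarithm for a suitably chosen exponent $n$. Since the unit rank of $K$ is one, $\#\mathcal{R}_K$ agrees, up to an explicit power-of-$p$ factor determined by the local behavior of $p$ in $K$ and (when $p=3$) by the group $\mathcal{W}_K$, with the $p$-adic absolute value of $\log_p\varepsilon_m$. Concretely, the normalization $\#\mathcal{R}_K\sim\frac{1}{p}\log_p(\varepsilon_m)$ recalled in the paper translates into $v_p(\#\mathcal{R}_K)=v_p(\log_p\varepsilon_m)-c$ for a constant $c$ depending only on the decomposition type.

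The workhorse is the sharp $p$-adic log estimate: in a local field $K_\mathfrak{p}/\Q_p$ of ramification index $e$, one has $v_\mathfrak{p}(\log_p(1+x))=v_\mathfrak{p}(x)$ as soon as $v_\mathfrak{p}(x)>e/(p-1)$. I would apply this to $x=\varepsilon_m^n-1$ with $n$ chosen so that $\varepsilon_m^n$ is a principal unit at $\mathfrak{p}$ strictly beyond this boundary; then $\log_p\varepsilon_m^n=n\log_p\varepsilon_m$ transfers the valuation back to $\varepsilon_m$, at the price of an additive correction $v_p(n)$ when $p\mid n$.

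For case (i), $e=1$ and $e/(p-1)<1$, so $n=p^f-1$ works: $\varepsilon_m^{p^f-1}\equiv 1\pmod{\mathfrak{p}}$ because the residue field has $p^f$ elements, hence $v_p(\log_p\varepsilon_m^{p^f-1})=v_p(\varepsilon_m^{p^f-1}-1)$, and $\gcd(p,p^f-1)=1$ leaves $v_p(\log_p\varepsilon_m)$ unchanged; subtracting the normalizing $1$ gives the formula. Case (ii) is analogous with $n=p-1$, $e=2$, and $p>3$ ensuring $e/(p-1)<1$; converting $v_\mathfrak{p}$ to $v_p$ contributes the factor $\tfrac{1}{2}$.

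The hard part will be case (iii). For $p=3$ ramified the boundary $e/(p-1)=1$ is saturated exactly at $v_\mathfrak{p}(\varepsilon_m^{p-1}-1)=1$, where the clean log formula fails, because the terms $x$ and $x^3/3$ of the log series have equal valuation and may interact. The remedy is to pass to $n=p(p-1)=6$: writing $\varepsilon_m^2=1+y$ with $v_\mathfrak{p}(y)\geq 1$, the binomial expansion of $(1+y)^3$ gives $v_\mathfrak{p}(\varepsilon_m^6-1)\geq 3>1$, restoring the applicability of the clean formula, and the extra factor $v_3(6)=1$ enters the bookkeeping. The dichotomy $\delta\in\{1,3\}$ will then reflect whether $K_\mathfrak{p}=\Q_3(\zeta_3)$, which by a direct local computation is equivalent to $m\equiv-3\pmod 9$: in that case the local $p$-th roots of unity contribute an additional factor $3$ to the normalization of $\mathcal{R}_K$ (through $\mathcal{W}_K$), shifting the exponent accordingly. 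The most delicate step is tracking this local cyclotomic correction precisely enough to distinguish $\delta=1$ from $\delta=3$.
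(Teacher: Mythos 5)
First, a caveat on the comparison itself: the paper does not prove Proposition \ref{vprk} at all --- it is quoted from \cite[Proposition 5.1]{Gra6} --- so your proposal must be measured against what such a proof has to contain, not against an internal argument. The logarithmic half of your plan is sound where you make it explicit: in (i), $v_{\mathfrak p}(\varepsilon_m^{p^f-1}-1)\geq 1>1/(p-1)$ legitimates $v_{\mathfrak p}(\log_p(\varepsilon_m^{p^f-1}))=v_{\mathfrak p}(\varepsilon_m^{p^f-1}-1)$ and $p\nmid p^f-1$ costs nothing; in (ii), $2/(p-1)<1$ precisely because $p>3$; and the passage to $n=6$ in (iii) is the right device, since $\varepsilon_m^6-1=y\,(y^2+3y+3)$ with $y=\varepsilon_m^2-1$, $v_{\mathfrak p}(y)\geq 1$, $v_{\mathfrak p}(3)=2$, gives $v_{\mathfrak p}(\varepsilon_m^6-1)\geq 3>e/(p-1)=1$, whence $v_{\mathfrak p}(\log_3 \varepsilon_m)=v_{\mathfrak p}(\varepsilon_m^6-1)-v_{\mathfrak p}(6)=v_{\mathfrak p}(\varepsilon_m^6-1)-2$.

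The genuine gap is in the other half, the normalization, which you posit as $v_p(\order{\mathcal R}_K)=v_p(\log_p\varepsilon_m)-c$ with $c$ an integer constant per decomposition type (``an explicit power-of-$p$ factor''). In the ramified cases this bookkeeping cannot yield the stated formulas: your transfer gives $v_p(\log_p\varepsilon_m)=\frac{1}{2}v_{\mathfrak p}(\varepsilon_m^{p-1}-1)$, so (ii) would read $\frac{1}{2}v_{\mathfrak p}(\varepsilon_m^{p-1}-1)-c$, whereas the proposition asserts $\frac{1}{2}v_{\mathfrak p}(\varepsilon_m^{p-1}-1)-\frac{1}{2}$; no integer $c$ fits, and the integrality of the right-hand side is itself unexplained in your scheme. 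The missing structural fact is the anti-invariance of the regulator: since $\Norm(\varepsilon_m)=\pm 1$, one has $\log_p(\varepsilon_m^\sigma)=-\log_p(\varepsilon_m)$, so in the ramified case $\log_p\varepsilon_m\in\Q_p\,\sqrt m$ and $v_{\mathfrak p}(\log_p\varepsilon_m)$ is \emph{odd}; the normalized regulator of \cite{Gra4} is then measured against the odd-valuation element $\sqrt m$ (i.e., the different $\mathfrak p$), not against a power of $p$, and this is exactly what converts $\frac{1}{2}v$ into the integer $\frac{1}{2}(v-1)$. Note that the relation $\order{\mathcal R}_K\sim\frac{1}{p}\log_p(\varepsilon_M^{})$ recalled in the paper is literally the unramified normalization only, so it cannot be invoked verbatim for (ii)--(iii). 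Relatedly, attributing the shift $\delta=3$ to ${\mathcal W}_K$ is a misattribution: in $\order{\mathfrak T}_K=\order{\mathcal C}'_K\cdot\order{\mathcal R}_K\cdot\order{\mathcal W}_K$ the factor ${\mathcal W}_K$ is separate from ${\mathcal R}_K$, and (iii) is a statement about ${\mathcal R}_K$ alone; the extra unit of $v_3$ when $m\equiv -3\pmod 9$ (equivalently $K_{\mathfrak p}=\Q_3(\zeta_3)$) comes from the change in the image of the $3$-adic logarithm on principal local units when $\mu_3\subset K_{\mathfrak p}$ --- the same root cause as ${\mathcal W}_K\simeq\Z/3\Z$, but a distinct mechanism, and conflating them risks counting the factor $3$ in the wrong place. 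You flag this ``cyclotomic correction'' as the most delicate step but do not carry it out, so the constant $-2-\delta$ in (iii), and already the $-1$ in (ii), remain asserted rather than derived.
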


A sufficient condition for the non-triviality of ${\mathcal R}_K$ that encompasses 
all cases (since the decomposition of $p$ in $\Q(\sqrt {M(t)})$ is unpredictable
in the \fop algorithm) is $\log_p(\varepsilon_m^{}) \equiv 0 \pmod {p^2}$; this implies 
that $\varepsilon_m^{}$ is a local $p$th power at $p$. It suffices to force the 
parameter~$t$ to be such that a suitable prime-to-$p$ power of 
$E_s(t) = \frac{1}{2}\big(t + r(t) \sqrt{M(t)} \big)$ is congruent to $1$ 
modulo $p^2$. So, exceptions may arrive only when $E_s(t)$ is a 
global $p$th power.

\subsection{Remarks about \texorpdfstring{$p$}{Lg}-rationality and 
non-\texorpdfstring{$p$}{Lg}-rationality}

In some sense, the $p$-rationality of $K$ comes down to saying that the 
$p$-arithmetic of $K$ is as simple as possible and that, on the contrary,
the non $p$-rationality is the standard context, at least for some $p$ for
$K$ fixed and very common when $K$ varies in some families, for $p$ fixed.

\medskip
{\bf a)} In general, most papers intend to find $p$-rational fields, a main purpose
being to prove the existence of families of $p$-rational quadratic fields (see, e.g., 
\cite{AsBo,Ben,BGKK,BeMo,Bou,BaRa,By,Gra2,Gra6,Kop,MaRo1,MaRo2,CLS}); 
for this there are three frameworks that may exist in general, but, to simplify, we 
restrict ourselves to real quadratic fields:

\smallskip
\quad (i) The quadratic field $K$ is fixed and it is conjectured that there exist only finitely 
many primes $p>2$ for which $K$ is non $p$-rational, which is equivalent to the existence 
of finitely many $p$ for which $\frac{1}{p} \log_p(\varepsilon_K^{}) \equiv 0 \pmod p$.

\smallskip
\quad (ii) The prime $p>2$ is fixed and it is proved/conjectured that there exist 
infinitely many $p$-rational quadratic field $K$, which is equivalent to the
existence of infinitely many $K$'s for which the $p$-class group is trivial
and such that $\frac{1}{p} \log_p(\varepsilon_K^{})$ is a $p$-adic unit;
this aspect is more difficult because of the $p$-class group.

\smallskip
\quad (iii) One constructs some families of fields $K(p)$ indexed by $p$ prime.
These examples of quadratic fields often make use of Lemma \ref{trace} 
to get interesting radicals and units. 

\smallskip
For instance we have
considered in \cite[\S\,5.3]{Gra6} (as many authors), the polynomials
$t^2p^{2 \rho} +s$ for $p$-adic properties of the unit
$E = t^2p^{2 \rho} +s + t p^\rho\sqrt {t^2p^{2 \rho}+ 2s}$ of norm $1$.

Taking ``$\rho = \frac{1}{2}$, $t=1$'', one gets the unit
$E=p+s +\sqrt{p(p+2s)}$ considered in \cite{Ben} where it is proved that for 
$p>3$, the fields $\Q(\sqrt{p(p+2)})$ are $p$-rational since the $p$-class group is trivial
(for analytic reasons) and the unit $p+1+\sqrt{p(p+2)}$ is not a local $p$-power.
Note that $4 p (p+2s) = m_1(2 p+2s)$, since $\Norm(E)=1$ for all $s$. 

\smallskip
Similarly, in \cite{BeMo}, is considered the bi-quadratic fields $\Q(\sqrt{p(p+2)}, \sqrt{p(p-2)})$, 
containing the quadratic field $\Q(\sqrt{p^2-4})$ giving the unit $\frac{1}{2}(p + \sqrt{p^2-4})$
still associated to $m_1(p)$; the $p$-rationality comes from the control of the $p$-class group 
since the $p$-adic regulators are obviously $p$-adic units. 

\smallskip
Finally, in \cite{Kop}, is considered the tri-quadratic fields $\Q(\sqrt{p (p+2)}, \sqrt{p(p-2)},\sqrt{-1})$
which are proven to be $p$-rational for infinitely many primes $p$; but these fields are imaginary,
so that one has to control the $p$-class group by means of non-trivial analytic arguments.

\smallskip
The $p$-rational fields allow many existence theorems and conjectures (as the 
Greenberg's conjecture \cite{Gre2} on Galois representations with open images, 
yielding to many subsequent papers as \cite{AsBo,Ben,BeMo,Bou,BaRa,GrJa,
Jau2,Kop}); they give results in the pro-$p$-group Galois theory \cite{MaRo1}. 
Algorithmic aspects of $p$-rationality may be found in \cite{Gra3,Gra5,PiVa} and 
in \cite{BeJa} for the logarithmic class group having strong connexions with 
${\mathfrak T}_K$ in connection with another Greenberg conjecture \cite{Gre1} 
(Iwasawa's invariants $\lambda = \mu = 0$ for totally real fields); for explicit
characterizations in terms of $p$-ramification theory, see \cite{Jau4,Gra8}, 
Greenberg's conjecture being obvious when ${\mathfrak T}_K =1$.

\medskip
{\bf b)}  We observe with the following program that the polynomials:
\[\hbox{$m_s(p+1) = (p+1)^2-4 s$\ \ and\ \ $m_s(2p+2) = 4(p+1)^2-4 s$}\]
always give $p$-rational quadratic fields, apart from very rare exceptions 
(only four ones up to $10^6$) due to the fact that the units 
$E_s(p+1) = \frac{1}{2}\big (p+1+\sqrt{(p+1)^2 - 4 s} \big )$ and 
$E_s(2p+2) = p+1+\sqrt{(p+1)^2 - s}$ may be a local $p$-power 
as studied in \cite{Gra2} in a probabilistic point of view (except in the case of 
$E_1(2p+2) = 1+p+\sqrt{p^2+2p} \equiv 1 \pmod {\mathfrak p}$,
with ${\mathfrak p}^2 = (p)$, thus never local $p$th power):

\smallskip
\ft\begin{verbatim}
{nu=8;L=List([-4,-1,1,4]);for(j=1,4,d=L[j];
print("m(p)=(p+1)^2-(",d,")");forprime(p=3,10^6,
M=core((p+1)^2-d);K=bnfinit(x^2-M);
wh=valuation(K.no,p);Kmod=bnrinit(K,p^nu);
CKmod=Kmod.cyc;val=0;d=#CKmod;
for(k=1,d-1,Cl=CKmod[d-k+1];
w=valuation(Cl,p);if(w>0,val=val+w));if(val>0,
print("p=",p," M=",M," v_p(#(p-class group))=",wh,
" v_p(#(p-torsion group))=",val))))}

m(p)=(p+1)^2+4, p=13     M=2         v_p(#(p-class group))=0 
                                     v_p(#(p-torsion group))=1
m(p)=(p+1)^2+1, p=11     M=145       v_p(#(p-class group))=0 
                                     v_p(#(p-torsion group))=2
                p=16651  M=277289105 v_p(#(p-class group))=0 
                                     v_p(#(p-torsion group))=1
m(p)=(p+1)^2-1, p=3      M=15        v_p(#(p-class group))=0 
                                     v_p(#(p-torsion group))=1
m(p)=(p+1)^2-4
\end{verbatim}\ns

\smallskip
The case of $p=3$, $M=15$ does not come from the regulator, nor from the class 
group, but from the factor $\order {\mathcal W}_K = 3$ since $15 \equiv -3 \pmod 9$;
but this case must be considered as a trivial case of non-$p$-rationality.

\medskip
{\bf c)}  For real quadratic fields, the $2$-rational fields are characterized via 
a specific genus theory and are exactly the subfields of the form $\Q(\sqrt m)$ 
for $m=2$, $m=\ell$, $m=2\ell$, where $\ell$ is a prime number congruent to 
$\pm 3 \pmod 8$ (see proof and history in \cite[Examples IV.3.5.1]{Gra3}). 
So we shall not consider the case $p=2$ since the non-$2$-rational quadratic 
fields may be easily deduced, as well as fields with non-trivial $2$-class group.

\medskip
{\bf d)}  Nevertheless, these torsion groups ${\mathfrak T}_K$ are ``essentially'' the 
Tate--\v{S}afarevi\v{c} groups (see their cohomological interpretations in \cite{Ng}):
\[{\rm III}_K^2 := {\rm Ker} \Big [{\rm H}^2 ({\mathcal G}_{K,S_p},\F_p) \rightarrow
\plus_{{\mathfrak p} \in S_p} \, {\rm H}^2 ({\mathcal G}_{K_{\mathfrak p}},\F_p) \Big], \] 
where $S_p$ is the set of $p$-places of $K$,
${\mathcal G}_{K,S_p}$ the Galois group of the maximal $S_p$-ramified  
pro-$p$-extension of $K$ and ${\mathcal G}_{K_{\mathfrak p}}$ the local 
analogue over $K_{\mathfrak p}$; so their non-triviality has an important
arithmetic meaning about the arithmetic complexity of the number fields
(see for instance computational approach of this context in \cite{Gra7} for the 
pro-cyclic extension of $\Q$ and the analysis of the Greenberg's conjecture 
\cite{Gre1} in \cite{Gra8}). When the set of places $S$ does not contain
$S_p$, few things are known about ${\mathcal G}_{K,S}$; see for
instance Maire's survey \cite{Mai} and its bibliography, then
\cite[Section 3]{Gra5} for numerical computations.

\smallskip
In other words, the non-$p$-rationality (equivalent,  for $p>2$, to ${\rm III}_K^2 \ne 0$)
is an obstruction to a local-global principle and is probably more mysterious than 
$p$-rationality. Indeed, in an unsophisticated context, it is the question of the number of 
primes $p$ such that the Fermat quotient $\ffrac{2^{p-1}-1}{p}$ is divisible by $p$, 
for which only two solutions are known; then non-$p$-rationality is the same
problem applied to algebraic numbers, as units $\varepsilon_M^{}$; this aspect is 
extensively developed in \cite{Gra2} for arbitrary Galois number fields).

\subsection{Families of local \texorpdfstring{$p$}{Lg}-th power units -- 
Computation of \texorpdfstring{${\mathfrak T}_K$}{Lg}}

We shall force the non triviality of ${\mathcal R}_K$ to obtain the non-$p$-rationality
of $K$.

\subsubsection{Definitions of local \texorpdfstring{$p$}{Lg}-th power units}
Taking polynomials stemming from suitable polynomials $m_s$ we can state:

\begin{theorem}\label{prational}
Let $p>2$ be a prime number and let $s \in \{-1,1\}$. 

\smallskip
(a) Let $a \in \Z_{\geq 1}$ and $\delta \in\{1,2\}$. We consider 
$T := 2\delta^{-1}(ap^4 t^2 - \delta s)$ and $m_1(T)$ giving rise to the unit 
$E_1(T) = \frac{1}{2}\big(T+\sqrt{T^2 - 4}\big) =
\frac{1}{\delta} \big(ap^4 t^2 - \delta s + p^2 t \sqrt{a^2 p^4 t^2 - 2 \delta a s}\big )$,
of norm~$1$ and local $p$th power at $p$.

\smallskip\noindent
The cases $(a, \delta) \in \{(1, 1), (1, 2), (2, 1), (3, 1), (3, 2), (4, 1), (5, 1), (5, 2) \}$
give distinct units.

\smallskip
(b) Consider $T := t_0 + p^2 t$ and $m_s(T)= T^2-4s$ and the units of norm $s$:
\[ E_s(T) = \ffrac{1}{2}\big (T + \sqrt{T^2-4s} \big);\] 
they are, for all $t$, local $p$th power at $p$ for suitable $t_0$ depending on 
$p$ and $s$, as follows:

\smallskip
\quad (i) For $t_0=0$, the units $E_s(T) = E_s(p^2 t)$ are local $p$th powers at $p$.

\smallskip
\quad (ii) For $p \not\equiv 5 \pmod 8$, there exist $s \in \{-1,1\}$ 
and $t_0 \in \Z_{\geq 1}$ solution of the congruence $t_0^2 \equiv 2s \pmod {p^2}$ 
such that the units $E_s(T)$ are local $p$th powers at $p$.

\smallskip
\quad (iii) We get the data $(p=3, s=-1, t_0 \in \{4, 5\})$, $(p=7, s=1, t_0 \in \{10, 39\})$, 
$(p=11, s=-1, t_0 \in \{19, 102\})$, $(p=17, s = -1, t_0 \in \{24, 265\}; s=1, t_0  \in \{45, 244\})$.

\medskip
As $t$ grows from $1$ up to $\BB$, for each first 
occurrence of a square-free integer $M \geq 2$ in the factorization 
$m(t) = a^2 p^4 t^2 - 2 \delta a s = M(t) r(t)^2$ (case (a)), or the factorization
$m(t) = (t_0 + p^2 t)^2-4s = M(t) r(t)^2$ (case (b)), the quadratic fields 
$\Q(\sqrt {M(t)})$, are non $p$-rational, apart possibly when 
$\frac{1}{\delta} \big(ap^4 t^2 - \delta s + p^2 t\, r(t) \sqrt{M(t)}\big) \in 
\langle \varepsilon_{M(t)}^p\rangle$ (case (a)), or
$\ffrac{1}{2}\big (t_0 + p^2 t + \sqrt{(t_0 + p^2 t)^2-4s} \big)
\in \langle \varepsilon_{M(t)}^p\rangle$ (case (b)).
\end{theorem}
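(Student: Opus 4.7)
The uniform strategy is to show, by an explicit congruence, that the displayed unit $E$ satisfies $E^k \equiv 1 \pmod{p^2}$ for some integer $k$ coprime to $p$, whence $v_p(\log_p E) \geq 2$. Applying Proposition \ref{vprk} in the appropriate one of its three subcases, according to the decomposition of $p$ in $K := \Q(\sqrt{M(t)})$, this forces $v_p(\#{\mathcal R}_K) \geq 1$, whence ${\mathfrak T}_K \neq 1$ and non-$p$-rationality of $K$. Writing $E = \varepsilon_{M(t)}^n$, the property transfers from $E$ to $\varepsilon_{M(t)}$ whenever $\gcd(n,p)=1$; the residual case $p \mid n$, equivalent to $E \in \langle \varepsilon_{M(t)}^p \rangle$, is precisely the exception left open in the statement.

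For part (a), direct expansion of $T = 2\delta^{-1}(ap^4 t^2 - \delta s)$ yields
\begin{equation*}
T^2 - 4 = \frac{4 a p^4 t^2 (a p^4 t^2 - 2\delta s)}{\delta^2},
\end{equation*}
producing the claimed factorisation $E_1(T) = \delta^{-1}(ap^4 t^2 - \delta s + p^2 t \sqrt{m(t)})$ with $m(t) := a^2 p^4 t^2 - 2\delta a s$. Since $p > 2$ and $\delta \in \{1,2\}$ is a $p$-adic unit, reducing modulo $p^2 \ZK$ gives $E_1(T) \equiv -s \pmod{p^2}$, hence $E_1(T)^2 \equiv 1 \pmod{p^2}$. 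The eight listed pairs $(a,\delta)$ give polynomials $m(t)$ with pairwise distinct leading coefficient or constant term, so the corresponding parametrised families are genuinely distinct.

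For part (b), $\alpha := E_s(T)$ is a root of $X^2 - TX + s = 0$, so $\alpha + \overline{\alpha} = T$, $\alpha \overline{\alpha} = s$, and $\alpha^2 + \overline{\alpha}^2 = T^2 - 2s$. In case (b)(i) with $T = p^2 t$, the relation $\alpha^2 = T\alpha - s$ gives $\alpha^2 \equiv -s \pmod{p^2}$, hence $\alpha^4 \equiv 1 \pmod{p^2}$. In case (b)(ii) with $T = t_0 + p^2 t$ and $t_0^2 \equiv 2s \pmod{p^2}$, one has $T^2 \equiv t_0^2 \pmod{p^2}$, so $\alpha^2 + \overline{\alpha}^2 \equiv 0$ while $\alpha^2 \cdot \overline{\alpha}^2 = 1$; thus $\alpha^2$ is a root modulo $p^2$ of $Y^2 + 1$, giving $\alpha^4 \equiv -1$ and $\alpha^8 \equiv 1 \pmod{p^2}$. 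The solvability of $X^2 \equiv 2s \pmod{p^2}$ reduces by Hensel's lemma to $2s$ being a quadratic residue modulo $p$; a case analysis on $p \bmod 8$ using the supplementary laws for $2$ and $-1$ shows that some $s \in \{-1,1\}$ works if and only if $p \not\equiv 5 \pmod{8}$. Part (b)(iii) is then a direct numerical check of the congruence $t_0^2 \equiv 2s \pmod{p^2}$ for each of the four small primes listed.

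The subtle point is to promote the scalar congruence $\alpha^{2^k} \equiv 1 \pmod{p^2}$, for $k \in \{1,2,3\}$, into $v_p(\log_p \alpha) \geq 2$ inside every completion $K_{\mathfrak p}$, independently of the ramification and residue degree of $p$. This holds because the congruences stem from polynomial identities over $\Z$ and therefore descend uniformly to $\ZK / p^2 \ZK$; since $\gcd(2^k, p) = 1$, the $p$-adic logarithm on the principal units preserves the valuation up to a $p$-adic unit factor $2^k$, so $v_p(\log_p \alpha) = v_p(\log_p \alpha^{2^k}) \geq 2$ in each completion above $p$. Proposition \ref{vprk} then applies in each of its three subcases, yielding $v_p(\#{\mathcal R}_K) \geq 1$ and completing the argument, subject only to the stated exception $E \in \langle \varepsilon_{M(t)}^p \rangle$.
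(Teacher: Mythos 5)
Your proposal is correct and takes essentially the same approach as the paper: the same congruences $E_1(T)\equiv -s\pmod{p^2}$, $E_s(p^2t)^2\equiv -s\pmod{p^2}$ and $E_s(T)^4\equiv -1\pmod{p^2}$ showing that a prime-to-$p$ power of the unit is $\equiv 1\pmod{p^2}$, hence that the unit is a local $p$th power at $p$, with the residual case $E\in\langle\varepsilon_{M(t)}^p\rangle$ left open exactly as in the statement. Your only departures are cosmetic: you obtain $E_s(T)^4\equiv -1$ from the characteristic polynomial of $E_s(T)^2$ (trace $T^2-2s$, norm $1$) instead of the paper's direct expansion of $\frac{1}{2}\big(T^2-2s+T\sqrt{T^2-4s}\big)$, you express the solvability of $t_0^2\equiv 2s\pmod{p^2}$ through the supplementary laws for $2$ and $-1$ rather than the paper's equivalent conditions $(p-1)(p+1)\equiv 0\pmod{16}$ and $(p-1)(p+5)\equiv 0\pmod{16}$, and you spell out the $\log_p$/valuation transfer that the paper delegates to the discussion preceding the theorem.
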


\begin{proof} The case (a) is obvious. Since the case (b)\,(i) is also obvious,
assume $t_0 \not\equiv 0 \pmod {p^2}$. We have:
\[ \big(E_s(T) \big)^2 \equiv \ffrac{1}{2} \big (T^2 - 2s + T \sqrt{T^2-4s}\, \big) \pmod {p^2}, \] 
whence $\big(E_s(T) \big)^2 \equiv \frac{t_0}{2} \sqrt{T^2-4s} \pmod {p^2}$ 
under the condition $t_0^2 \equiv 2s \pmod {p^2}$. So, $E_s(T)^4 \equiv 
\frac{1}{4} t_0^2( t_0^2-4s) \equiv -1 \pmod{p^2}$, whence the result.
One computes that $t_0^2 \equiv 2s \pmod {p^2}$ has solutions 
for $(p-1)(p+1) \equiv 0 \pmod {16}$ when $s=1$ and 
$(p-1)(p+5) \equiv 0 \pmod {16}$ when $s=-1$.
\end{proof}

For instance, in case (a) we shall use $m(t)=p^4 t^2 - s$, $m(t)=p^4 t^2 - 2s$,  
$m(t)=p^4 t^2 - 4s$, $m(t)=9 p^4 t^2 - 6s$, $m(t)=9 p^4 t^2 - 12s$, 
$m(t)=4p^4 t^2 - 2s$,  $m(t)=25 p^4 t^2 - 10s$, $m(t)=25 p^4 t^2 - 20s$.
The case (b) has the advantage that the traces of the units are in $O(t)$
instead of $O(t^2)$ for case (a).

\smallskip
Since in many computations we are testing if some unit $E_s(T)$ is a global $p$th power,
we state the following result which will be extremely useful in practice because it means 
that the exceptional cases are present only at the beginning of the \fop list:

\begin{theorem}\label{heuristic}
Let $T$ be of the form $T = c t^h + c_0$, $c \geq 1$, $h \geq 1$, $c_0 \in \Z$ 
fixed and set $T^2 - 4s = M(t) r(t)^2$ when $t$ runs through $\Z_{\geq 1}$. 
For $\BB \gg 0$, the maximal bound $M_\BB^\pow$ of the square-free integers $M(t)$, 
obtained by the \fop algorithm, for which $E_s(T) := \frac{1}{2}\big (T + \sqrt{T^2 - 4s}\big)$ 
may be a $p$th power in $\langle \varepsilon_{M(t)}^{} \rangle$
(whence the field $\Q(\sqrt {M(t)})$ being $p$-rational by exception), 
is of the order of $(c^2 \BB^{2h})^{\frac{1}{p}}$ as $\BB \to \infty$.
\end{theorem}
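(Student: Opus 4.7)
The plan is to derive the bound $M_\BB^{\pow} = O\big((c^2\BB^{2h})^{1/p}\big)$ by a pure size comparison, and to discuss the matching lower bound as the heuristic element. Suppose that for some $t \in [1,\BB]$, the unit $E_s(T)$, with $T = ct^h + c_0$ and $T^2 - 4s = M(t) r(t)^2$, $M(t)$ square-free, can be written as $\eta^p$ with $\eta \in \langle \varepsilon_M^{}\rangle$, where $M := M(t)$. Taking the largest possible $M$ forces $\eta$ to be (up to sign) $\varepsilon_M^{}$ itself, since higher powers of $\varepsilon_M^{}$ only make $M$ smaller for a given trace $T$; so I may reduce to the case $E_s(T) = \pm\varepsilon_M^p$.

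Next, I will use two elementary size inequalities. On one hand, since $\varepsilon_M^{} = \tfrac{1}{2}(a+b\sqrt M)$ with $a, b \in \Z_{\geq 1}$, the standard bound $\varepsilon_M^{} \geq \tfrac{1+\sqrt M}{2}$ holds, hence $\varepsilon_M^p \geq \big(\tfrac{1+\sqrt M}{2}\big)^{p}$. On the other hand, taking the trace of the identity $E_s(T) = \pm\varepsilon_M^p$ yields
\[
T \;=\; \pm\bigl(\varepsilon_M^p + s^p\,\varepsilon_M^{-p}\bigr),
\]
so that $\varepsilon_M^p \leq T + 1 \leq c\BB^h + c_0 + 1$. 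Combining the two inequalities gives
\[
M \;\leq\; \bigl(2(c\BB^h + c_0 + 1)^{1/p} - 1\bigr)^{2} \;\sim\; 4\,(c^2\BB^{2h})^{1/p}
\]
as $\BB \to \infty$, which is the announced order of magnitude.

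The main obstacle is the reverse inequality needed to justify \emph{of the order of} rather than merely \emph{at most of the order of}. Saturating the bound requires the existence of square-free $M$ close to the threshold $(c^2\BB^{2h})^{1/p}$ whose fundamental unit has near-minimal size $\varepsilon_M^{} \sim \tfrac{1}{2}\sqrt M$ and satisfies $\varepsilon_M^p = \pm E_s(T)$ for some admissible $t \leq \BB$. Following the heuristic of Remark~\ref{MB}, the radicals $M(t)$ are equidistributed with positive density, and by Theorem~\ref{fondunit} the units produced by the \fop{} algorithm are precisely the fundamental ones; thus, as soon as one fundamental unit $\varepsilon_M^p$ has trace landing in the admissible image $\{ct^h+c_0 : 1 \le t \le \BB\}$, the pair $(M,t)$ is captured by the algorithm. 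The claim is then supported by the PARI experiments, but a fully rigorous proof of sharpness would demand precise equidistribution results for fundamental units of prescribed size, which lie beyond the elementary scope of the \fop{} framework.
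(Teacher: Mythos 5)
Your proposal is correct and takes essentially the same approach as the paper: the paper's proof is precisely this size comparison ($E_s(T)\sim T$, $\varepsilon_M^{}\sim b\sqrt M$ with the unfavorable case $b=1$), which you merely render explicit via $\varepsilon_M^{}\geq\frac{1}{2}\big(1+\sqrt M\big)$, the trace identity, and the reduction to the exponent $k=1$. Your closing caveat about sharpness is also consonant with the paper, whose proof likewise establishes only the upper bound $M_\BB^\pow \ll (c^2\BB^{2h})^{1/p}$ and leaves the ``of the order of'' claim at the heuristic level (the result is indeed invoked later as a heuristic, supported by the numerical experiments).
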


\begin{proof}
Put $\varepsilon_M^{} = \frac{1}{2} (a + b\sqrt M)$ as usual; then we can write
$\varepsilon_M^{} \sim b\sqrt M$ and $E_s(T) \sim T$ so that $T$ and $(b\sqrt M)^p$ 
are equivalent as $M$ and $\BB$ tend to infinity; taking the most unfavorable case
$b=1$, we conclude that $M_\BB^\pow \ll (c^2 \BB^{2h})^{{2}/{p}}$ in general.
\end{proof}

For instance $T=t_0+p^2 t$, of the case (b) of Theorem \ref{prational},
gives a bound $M_\BB^\pow$, of possible exceptional Kummer radicals, of the order 
of $(p^4 \BB^{2})^{{1}/{p}}$. This implies that when $\BB \to \infty$, the density
of Kummer radicals $M$ such that $E_s(T)$ is not a global $p$th power
is equal to $1$.
With $\BB=10^6$, often used in the programs, the bound $M_\BB^\pow$
tends to $1$ quickly as $p$ increases. In practice, for almost all primes $p$, 
the \fop lists are without any exception (only the case $p=3$ gives larger
bounds, as $M_{10^6}^\pow \approx 43267$ for the above example; but it
remains around $10^6 - 43267 = 956733$ certified solutions $M$).

\subsubsection{Program of computation of ${\mathfrak T}_K$}

In case a) of Theorem \ref{prational},
we give the program using together the $16$ parametrized radicals 
and we print short excerpts. The parameter ${\sf e}$ must be 
large enough such that ${\sf p^e}$ annihilates ${\mathfrak T}_K$; 
any prime number $p>2$ may be illustrated (here we take $p=3, 5, 7$).
A part of the program is that given in \cite{Gra3} for any number field.
For convenience, we replace a data of the form ${\sf [7784110,List([9])]}$, in the outputs,
by ${\sf [7784110,[9]]}$ giving a $3$-group ${\mathfrak T}_K$ of $\Q(\sqrt{7784110})$
isomorphic to $\Z/9\Z$.

\smallskip
\ft\begin{verbatim}
{B=10^4;p=3;Lm=List([List([1,-4]),List([1,-2]),
List([1,-1]),List([1,1]),List([1,2]),List([1,4]),
List([4,-2]),List([4,2]),List([9,-6]),List([9,6]),
List([9,-12]),List([9,12]),List([25,-10]),
List([25,10]),List([25,-20]),List([25,20])]);
e=8;p4=p^4;Ln=List;LM=List;
for(t=1,B,for(ell=1,16,a=Lm[ell][1];b=Lm[ell][2];
mt=a*t^2*p4+b;M=core(mt);K=bnfinit(x^2-M,1);
Kmod=bnrinit(K,p^e);CKmod=Kmod.cyc;
Tn=List;d=#CKmod;for(k=1,d-1,
Cl=CKmod[d-k+1];w=valuation(Cl,p);
if(w>0,listinsert(Tn,p^w,1)));L=List([M,Tn]);
listput(LM,vector(2,c,L[c]))));
VM=vecsort(vector(16*B,c,LM[c]),1,8);
print(VM);print("#VM = ",#VM);
for(k=1,#VM,T=VM[k];if(T[2]==List([]),
listput(Ln,vector(1,c,T[c]))));Vn=vecsort(Ln,1,8);
print("exceptions:",Vn)}
p=3
[M,Tn]=
[[2,[]],[3,[]],[5,[]],[6,[3]],[7,[]],[10,[]],[11,[]],[13,[]],[14,[]],[15,[3]],[21,[]],
[23,[]],[29,[9]],[33,[3]],[34,[]],[35,[]],[37,[]],[38,[]],[42,[9]],[53,[]],[55,[]],
[58,[3]],[61,[]],[62,[3]],[69,[3]],[74,[9]],[77,[3]],[78,[3]],[79,[9]],[82,[3]],
[83,[3]],[85,[3]],[87,[3]],[93,[3]],[103,[3]],[106,[3]],[109,[3]],[110,[]],[115,[]],
[122,[81]],[141,[9]],[142,[3]],[143,[]],[145,[]],[146,[]],[151,[3]],[159,[3]],[173,[3]],
(...)
[202378518245,[3]],[202419008110,[27,3]],[202459502005,[27,9]],[202459502015,[81]],
[202459502035,[81]],[202459502045,[81,3]],[202499999990,[3]],[202500000010,[3]]]
#VM = 139954
exceptions:List([[2],[3],[5],[7],[10],[11],[13],[14],[21],[23],[34],[35],[37],[38],[53],
[55],[61],[110],[115],[143],[145],[146],[205],[215],[221],[226],[227],[230],[437],[439],
[442],[445],[577],[890],[902],[905],[910],[1085],[1087],[1093],[1517],[1762],[1766],
[2605],[3595],[3605],[5605],[5615],[5645],[11005]])

p=5
[M,Tn]=
[[2,[]],[3,[]],[5,[]],[6,[]],[21,[]],[23,[]],[26,[]],[29,[]],[38,[5]],[39,[5]],[51,[5]],
[62,[25]],[69,[5]],[89,[25]],[102,[]],[107,[5]],[114,[5]],[127,[5]],[134,[5]],[161,[5]],
[183,[5]],[186,[5]],[213,[]],[219,[]],[231,[]],[237,[]],[278,[5]],[287,[5]],[295,[25]],
[326,[5]],[382,[5]],[422,[5]],[434,[25]],[453,[5]],[467,[5]],[501,[5]],[509,[5]],
[514,[25]],[519,[5]],[574,[5]],[581,[5]],[606,[125]],[623,[5]],[626,[5]],[627,[5]],
[629,[5]],[645,[5]],[662,[5]],[674,[5]],[761,[5]],
(...)
[1561562640635,[125]],[1561562640645,[25]],[1561875062510,[25]],[1562187515605,[625]],
[1562187515615,[125]],[1562187515635,[25]],[1562187515645,[625]],[1562500000010,[15625]]]
#VM = 139982
exceptions:List([[2],[3],[5],[6],[21],[23],[26],[29],[102],[213],[219],[231],[237]])

p=7
[M,Tn]=
[[6,[7]],[37,[7]],[74,[7]],[101,[7]],[123,[7]],[145,[49]],[149,[7]],[206,[7]],[214,[7]],
[215,[7]],[219,[7]],[267,[7]],[505,[7]],[554,[7]],[570,[7]],[629,[7]],[663,[7]],[741,[7]],
[817,[49]],[834,[49]],[887,[49]],[894,[7]],[1067,[7]],[1373,[49]],[1446,[7]],[1517,[7]],
[1590,[7]],[1893,[7]],[2085,[7]],[2162,[7]],[2302,[49]],[2355,[7]],[2397,[7]],[2399,[7]],
[2402,[7]],[2405,[7]],[2498,[7]],[2567,[7]],[2615,[7]],[2679,[7]],[2742,[7]],[2778,[7]],
(...)
[5998899040235,[49]],[6000099240090,[7]],[6000099240110,[7]],[6001299560005,[7]],
[6001299560015,[7]],[6001299560035,[7]],[6001299560045,[7]],[6002499999990,[7]]]
#VM = 139991
exceptions:List([])
\end{verbatim}\ns

For $p=11$ and $13$ no exception is found for ${\sf B=10^4}$.

\medskip
The case b) of Theorem \ref{prational} gives an analogous program and will be 
also illustrated in the Section \ref{sec7} about $p$-class groups, especially for the 
case $p=3$. The results are similar and give, in almost cases, non-trivial $p$-adic 
regulators ${\mathcal R}_K$, hence non-$p$-rational fields $K$:

\smallskip
\ft\begin{verbatim}
p-RATIONALITY
{B=10^4;p=3;e=8;p4=p^4;Ln=List;LM=List;
for(t=1,B,forstep(s=-1,1,2,mt=p4*t^2-4*s;M=core(mt);
K=bnfinit(x^2-M,1);Kmod=bnrinit(K,p^e);
CKmod=Kmod.cyc;Tn=List;d=#CKmod;
for(k=1,d-1,Cl=CKmod[d-k+1];w=valuation(Cl,p);
if(w>0,listinsert(Tn,p^w,1)));L=List([M,Tn]);
listput(LM,vector(2,c,L[c]))));
VM=vecsort(vector(2*B,c,LM[c]),1,8);
print(VM);print("#VM = ",#VM);
for(k=1,#VM,T=VM[k];if(T[2]==List([]),
listput(Ln,vector(1,c,T[c]))));Vn=vecsort(Ln,1,8);
print("exceptions:",Vn)}
p=3
[M,Tn]=
[[2,[]],[5,[]],[10,[]],[13,[]],[14,[]],[29,[9]],[35,[]],[37,[]],[58,[3]],[61,[]],[62,[3]],
[74,[9]],[77,[3]],[82,[3]],[85,[3]],[106,[3]],[109,[3]],[110,[]],[122,[81]],[143,[]],
[145,[]],[173,[3]],[181,[3]],[182,[9]],[202,[3]],[221,[]],[226,[]],[229,[3]],[257,[27]],
[287,[3]],[323,[3]],[359,[9]],[397,[3]],[401,[3]],[410,[27]],[437,[]],[442,[]],[445,[]],
[506,[9]],[515,[3]],[518,[3]],[533,[3]],[626,[3]],[635,[3]],[674,[9]],[730,[27]],
(...)
[8078953685,[81]],[8078953693,[9]],[8082189805,[3,3]],[8085426557,[9]],
[8085426565,[9]],[8088663965,[9]],[8091902021,[3]],[8095140733,[3]],
[8098380077,[27,3]],[8098380085,[27]]
#VM = 19990
exceptions:List([[2],[5],[10],[13],[14],[35],[37],[61],[110],[143],[145],[221],[226],
[437],[442],[445],[1085],[1093],[1517]])

p=5
[M,Tn]=
[[6,[]],[21,[]],[26,[]],[29,[]],[39,[5]],[51,[5]],[69,[5]],[89,[25]],[114,[5]],[161,[5]],
[326,[5]],[434,[25]],[501,[5]],[509,[5]],[514,[25]],[574,[5]],[581,[5]],[626,[5]],
[629,[5]],[674,[5]],[761,[5]],[789,[5]],[791,[5]],[874,[5]],[1086,[5]],[1111,[5,5]],
[1191,[5]],[1351,[5]],[1406,[5]],[1641,[625]],[1761,[5]],[1851,[5]],[1914,[5]],
(...)
[62412530621,[5]],[62412530629,[5]],[62437515621,[25]],[62437515629,[125]], 
[62462505621,[5]],[62462505629,[5]],[62487500621,[5]],[62487500629,[5]]]
#VM = 19996
exceptions:List([[6], [21], [26], [29]])

p=7
[M,Tn]=
[[6,[7]],[37,[7]],[101,[7]],[145,[49]],[149,[7]],[206,[7]],[215,[7]],[554,[7]],[570,[7]],
[629,[7]],[663,[7]],[741,[7]],[817,[49]],[894,[7]],[1067,[7]],[1373,[49]],[1517,[7]],
[1893,[7]],[2085,[7]],[2162,[7]],[2302,[49]],[2355,[7]],[2397,[7]],[2402,[7]],[2405,[7]],
[2498,[7]],[2567,[7]],[2679,[7]],[2742,[7]],[2845,[7]],[2915,[49]],[3162,[7]],[3477,[7]],
(...)
[239668014477,[7]], [239668014485,[7]],[239763977645,[343]],[239763977653,[7]],
[239859960029,[7]],[239955961613,[7]],[240051982397,[7]],[240051982405,[7]]]
#VM = 19998
exceptions:List([])
\end{verbatim}\ns

\subsection{Infiniteness of non \texorpdfstring{$p$}{Lg}-rational real quadratic fields}
All these experiments raise the question of the infiniteness, for any given prime $p \geq 22$,
of non $p$-rational real quadratic fields when the non $p$-rationality is due to 
${\mathcal R}_K \equiv 0 \pmod p$ (i.e., $\log(\varepsilon_M^{}) \equiv 0 \pmod {p^2}$). 
The case $p=2$ being trivial because of genus theory for $2$-class groups, we suppose
$p>2$. However, it is easy to prove this fact for $p=2$ by means of the regulators.

\subsubsection{Explicit families of units}
We will built parametrized Kummer radicals and units, in the corresponding fields,
which are not $p$th power of a unit; the method relies on the choice of
suitable values of the parameter trace $t$. This will imply the infiniteness of 
degree $p-1$ imaginary cyclic fields of the Section \ref{sec7} having non trivial 
$p$-class group.

\begin{theorem}\label{infty}
(i) Let $q \equiv 1 \pmod p$ be prime, let $\ov c \notin \F_q^{\times p}$  
and $t_q \in \Z_{\geq 1}$ such that $t_q \equiv \ffrac{c^2 + s}{2 c p^2} \pmod q$. 
Then, whatever the bound $\BB$, the \fop algorithm applied to the polynomial
$m(t_q + q x)
= p^4 (t_q + q x)^2-s$, $x \in \Z_{\geq 0}$, gives lists of distinct Kummer 
radicals $M$, in the ascending order, such that $\Q(\sqrt M)$ is non-$p$-rational.

\smallskip
(ii) For any given prime $p>2$ there exist infinitely many real quadratic fields $K$ 
such that ${\mathcal R}_K \equiv 0 \pmod p$, whence infinitely many non $p$-rational 
real quadratic fields. 
\end{theorem}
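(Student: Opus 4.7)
The guiding principle is: whenever we factor $p^4 t^2 - s = M(t)r(t)^2$ with $M$ square-free, the unit $E := p^2 t + r\sqrt M$ of $K = \Q(\sqrt M)$ (of norm $s$) is a local $p$-th power at $p$ by Theorem \ref{prational}. If $E$ is not globally a $p$-th power of a unit, then $v_p(\order{\mathcal R}_K) \geq 1$ and $K$ is non-$p$-rational. The entire task is therefore to rule out the global $p$-th power exception for an infinite family of parameters $t$, and to deduce (ii) from (i) via Dirichlet's theorem.

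For part (i), the congruence $t_q \equiv (c^2+s)/(2cp^2) \pmod q$, combined with the identity $(c^2+s)^2 - 4c^2 s = (c^2-s)^2$, gives for every $t \equiv t_q \pmod q$
\[p^4 t^2 - s \equiv \bigl((c^2-s)/(2c)\bigr)^2 \pmod q.\]
Choosing $\bar c \in \F_q^\times \setminus \F_q^{\times p}$ with the additional constraint $\bar c^2 \not\equiv \bar s \pmod q$ (possible as soon as $q$ is large enough, since $\bar c^2 = \bar s$ excludes at most two elements) ensures this residue is nonzero; hence $q$ splits in $K$, say $q\ZK = \mathfrak q \mathfrak q^\sigma$, and for a suitable labelling of the two primes one gets $r\sqrt M \equiv (c^2-s)/(2c) \pmod{\mathfrak q}$. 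A direct computation then yields
\[E = p^2 t + r\sqrt M \equiv \frac{c^2+s}{2c} + \frac{c^2-s}{2c} = c \pmod{\mathfrak q}.\]
If $E = \varepsilon^p$ for some $\varepsilon \in \ZK^\times$, reduction modulo $\mathfrak q$ would force $\bar c \in \F_q^{\times p}$, contradicting the choice of $c$. Thus $E$ is not a global $p$-th power, and $K$ is non-$p$-rational by Theorem \ref{prational}.

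As $x$ runs through $\Z_{\geq 0}$, the parameters $t = t_q + qx$ yield a sequence of factorizations $p^4 t^2 - s = M(t) r(t)^2$. Any fixed square-free $M$ appears for at most $O(\log \BB)$ values of $t \leq \BB$, since such $t$ is then of the form $\Trace(\varepsilon_M^n)/(2p^2)$ for some $n \geq 1$ and $\Trace(\varepsilon_M^n)$ grows exponentially in $n$ by Theorem \ref{thmtrace}; the progression itself contains $\Theta(\BB/q)$ values of $t$ up to $\BB$, so infinitely many distinct $M$ must appear in the \fop output, each producing a non-$p$-rational $\Q(\sqrt M)$, and the list is in ascending order by construction. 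Statement (ii) is then immediate: for any prime $p > 2$, Dirichlet's theorem supplies (infinitely many) primes $q \equiv 1 \pmod p$, and applying (i) to any such $q$ yields the announced infinite family.

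The heart of the proof is the residue calculation $E \equiv c \pmod{\mathfrak q}$, engineered by the tailored choice of $t_q \pmod q$ so that $p^2 t$ and $r\sqrt M$ reduce to explicit elements of $\F_q$ summing to $c$. All other ingredients --- the local $p$-th power property at $p$ (already in Theorem \ref{prational}), the splitting of $q$ in $K$, the Pell-type counting underlying distinctness of the $M$'s, and Dirichlet's theorem --- are standard or have been established earlier in the paper.
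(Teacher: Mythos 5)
Your proposal is correct; your part (i) coincides with the paper's argument, while your infiniteness argument is genuinely different. For (i) you use the paper's exact device: the congruence on $t_q$ forces the unit $E = p^2 t + r\sqrt{M}$ to reduce to $c$ modulo a prime $\mathfrak q \mid q$, so that $E \in \langle \varepsilon_M^p \rangle$ would contradict $\ov c \notin \F_q^{\times p}$. The only deviation is that you force $q$ to split via the side condition $c^2 \not\equiv s \pmod q$, whereas the paper computes $\Norm(E - c) \equiv 0 \pmod q$ and also treats the inert residue field $\F_{q^2}$, using $p \nmid q+1$. Your side condition is in fact automatic: $c^2 \equiv s \pmod q$ would give $c^4 \equiv 1$, and every element of order dividing $4$ in $\F_q^\times$ is a $p$th power when $q \equiv 1 \pmod p$ with $p$ odd (if $4 \mid q-1$ then $4 \mid (q-1)/p$), contradicting the choice of $c$; so no ``$q$ large enough'' hypothesis is needed, and your identity $(c^2+s)^2 - 4c^2 s = (c^2-s)^2$ actually shows the paper's inert case never occurs. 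The genuine divergence is the proof that infinitely many distinct $M$ arise: the paper fixes an arbitrarily large auxiliary prime $\ell$ and adjusts $y$ in $t = t_q + (x_0 + y\ell)q$ so that $\ell$ divides $p^2 t - 1$ to exact order $1$, forcing $\ell \mid M$ and hence explicitly unbounded radicals; you instead pigeonhole, observing that a fixed $M$ accounts for at most $O(\log \BB)$ parameters $t \leq \BB$ (any such $t$ equals $\Trace(\varepsilon_M^n)/(2p^2)$ for some $n \geq 1$) against $\Theta(\BB/q)$ parameters in the progression. This is sound, with one small attribution slip: Theorem \ref{thmtrace} gives only strict monotonicity of the traces, whereas your $O(\log \BB)$ count needs the (immediate, uniform in $M$) exponential bound $\Trace(\varepsilon_M^n) \geq \varepsilon_M^n - 1 \geq \big(\frac{1+\sqrt 5}{2}\big)^n - 1$. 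Your route avoids the paper's somewhat delicate $\ell$-valuation adjustment and yields a quantitative lower bound $\gg \BB/(q \log \BB)$ for the number of distinct radicals produced up to parameter $\BB$, while the paper's construction buys explicitly unbounded radicals; the two conclusions are equivalent for the theorem's purpose, since infinitely many distinct square-free integers are perforce unbounded. Finally, invoking Dirichlet's theorem in (ii) is harmless overkill: a single prime $q \equiv 1 \pmod p$ already suffices, exactly as in the paper.
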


\begin{proof}
(i) {\it Criterion of non $p$th power.}
Consider $m(t)= p^4 t^2 -s$ and the unit $E_s(2p^2t)=p^2t + \sqrt{p^4 t^2-s}$ of 
norm $s$ and local $p$th power at $p$.
Choose a prime $q \equiv 1 \pmod p$ and let $c \in \Z_{>1}$ be non 
$p$th power modulo $q$ (whence $(q-1)\big (1-\frac{1}{p} \big)$ possibilities).  
Let $t \equiv \ffrac{c^2 + s}{2 c p^2} \pmod q$; then: 
\begin{equation*}
\begin{aligned}
\Norm(E_s(2p^2t) - c)& = \Norm(p^2 t - c + \sqrt{p^4 t^2-s}) \\
& = (p^2t-c)^2-p^4 t^2+ s =  c^2 + s - 2cp^2 t  \equiv 0 \pmod q.
\end{aligned}
\end{equation*}

Such value of $t$ defines the field $\Q(\sqrt {M(t)})$, via $p^4 t^2 -s = M(t) r(t)^2$,
and whatever its residue field at $q$ ($\F_q$ or $\F_{q^2}$), we get
$E_s(2p^2t) \equiv c \pmod {\mathfrak q}$, for some ${\mathfrak q} \mid q \Z$; since
in the inert case, $\order \F_{q^2}^\times = (q-1)(q+1)$, with $q+1 \not\equiv 0 \pmod p$,
$c$ is still non $p$th power, and $E_s(2p^2t)$ is not a local $p$th power modulo ${\mathfrak q}$,
hence not a global $p$th power.

\smallskip
(ii) {\it Infiniteness.}
Now, for simplicity to prove the infiniteness, we restrict ourselves to the case 
$m(t)=p^4 t^2 -1$ (the case $m(t)=p^4 t^2 +1$ may be considered with a similar 
reasoning in $\Z[\sqrt{-1}]$ instead of $\Z$).
Let $\ell$ be a prime number arbitrary large and consider the congruence:
$$p^2 (t_q+ q x) \equiv 1 \pmod \ell ; $$ 
it is equivalent to $x = x_0 + y \ell$, $y \in \Z_{\geq 0}$, where $x_0$ 
is a residue modulo $\ell$ of the constant $\ffrac{1-t_q p^2}{qp^2}$; so, we have
$p^2 (t_q+ q x_0) - 1 = \lambda \ell^n$, $n \geq 1$, $\ell \nmid \lambda$. 
Computing these $m(t)$'s, with $t = t_q+ (x_0 + y \ell) q$, gives:
$$p^4 (t_q+ q (x_0 + y  \ell))^2-1= [p^2 (t_q+ q (x_0 + y  \ell))-1]\cdot
[p^2 (t_q+ q (x_0 + y  \ell))+1] \equiv 0 \!\!\! \pmod \ell; $$ 
the right factor is prime to $\ell$; the left one is of the form $\lambda \ell^n + q y p^2 \ell$, 
and whatever $n$, it is possible to choose $y$ such that the $\ell$-valuation
of $\lambda \ell^{n-1} + q y  p^2$ is zero. So, for such integers $t$, we have the 
factorization $m(t) = \ell M' r^2$, where $M' \geq 1$ is square-free and $M' r^2$
prime to $\ell$, which defines $M:=\ell M'$ arbitrary large.

\smallskip
This proves that in the \fop algorithm, when $\BB \to \infty$, one can find arbitrary large 
Kummer radicals $M(t_q+ (x_0 + y \ell) q)$ such that the corresponding unit 
$E_1(t_q+ (x_0 + y \ell) q)$ is a local $p$th power modulo $p$, but not a global 
$p$-th power.
\end{proof}

The main property of the \fop algorithm is that the Kummer radicals obtained are 
distinct and listed in the ascending order; without the \fop process, all the integers 
$t = t_q+ (x_0 + y \ell) q$ giving the same $M$ give  
$E_1(t_q+ (x_0 + y \ell) q) = \varepsilon_{M}^n$ with $n \not\equiv 0 \pmod p$.

\subsubsection{Unlimited lists of non-\texorpdfstring{$p$}{Lg}-rational real 
quadratic fields}\label{3ex}
Take $p=3$, $q=7$, $c \in \{2, 3, 4, 5\}$. With $m(t)=81 t^2-1$, then $t_q \in \{2, 5\}$;
with $m(t)=81 t^2+1$, then $t_q \in  \{3, 4\}$ and $t=t_q + 7 x$, $x \geq 0$. 
The \fop list is without any exception, giving non $3$-rational quadratic fields $\Q(\sqrt M)$
(in the first case, $p=3$ is inert and in the second one, $p=3$ splits.
We give the corresponding list using together the four possibilities:

\smallskip
\ft\begin{verbatim}
NON p-RATIONAL REAL QUADRATIC FIELDS I
{B=10^6;p=3;Lm=List([List([-1,3]),List([-1,4]),
List([1,2]),List([1,5])]);Ln=List;LM=List;
for(t=1,B,for(ell=1,4,s=Lm[ell][1];tq=Lm[ell][2];
M=core(81*(tq+7*t)^2-s);L=List([M]);
listput(LM,vector(1,c,L[c]))));
VM=vecsort(vector(4*B,c,LM[c]),1,8);
print(VM);print("#VM = ",#VM)}
[M]=
[58],[74],[106],[113],[137],[359],[386],[401],[410],[494],[515],[610],[674],[743],[806],
[842],[877],[1009],[1010],[1157],[1367],[1430],[1901],[1934],[2006],[2153],[2255],
[2522],[2678],[2822],[2986],[3014],[5266],[5513],[6626],[6707],[6722],[6890],[7310],
[7610],[7858],[7919],[8101],[8465],[8555],[8738],[8761],[9410],[9634],[9998],[11183],
[11195],[11237],[11447],[11509],[11537],[11663],[11890],[11965],[13427],[13645],
[14795],[16895],[16913],[17266],[18530],[19223],[19826],[20066],[20735],[21023],
[21317],[21389],[22730],[23066],[23102],[23410],[23626],[23783],[23963],
(...)
[248061933000323],[248063067000323],[248063350500730],[248063634001297]
#VM = 4000000
\end{verbatim}\ns

\smallskip
In case of doubt about the results, one may use the same program with the computation
of $\order {\mathfrak T}_K$; but the execution time is much larger and it is not possible
to take a large $\BB$ since the computations need the instructions ${\sf K=bnfinit(x^2-M)}$
and ${\sf Kmod=bnrinit(K,p^e)}$ of class field theory package (the list below contains $42$ 
outputs up to $M=23963$, while the first one contains $80$ Kummer radicals):

\smallskip
\ft\begin{verbatim}
NON p-RATIONAL REAL QUADRATIC FIELDS II
{B=10^3;p=3;Lm=List([List([-1,3]),List([-1,4]),
List([1,2]),List([1,5])]);e=8;p4=p^4;Ln=List;LM=List;
for(t=1,B,for(ell=1,4,s=Lm[ell][1];t0=Lm[ell][2];
M=core(81*(t0+7*t)^2-s);K=bnfinit(x^2-M);
Kmod=bnrinit(K,p^e);CKmod=Kmod.cyc;
Tn=List;d=#CKmod;for(k=1,d-1,
Cl=CKmod[d-k+1];w=valuation(Cl,p);
if(w>0,listinsert(Tn,p^w,1)));L=List([M,Tn]);
listput(LM,vector(2,c,L[c]))));
VM=vecsort(vector(4*B,c,LM[c]),1,8);
print(VM);print("#VM = ",#VM);
for(k=1,#VM,T=VM[k];if(T[2]==List([]),
listput(Ln,vector(1,c,T[c]))));Vn=vecsort(Ln,1,8);
print("exceptions:",Vn)}
[M,Tn]=
[[58,[3]],[74,[9]],[106,[3]],[359,[9]],[401,[3]],[410,[27]],[515,[3]],[674,[9]],[842,[9]],
[1009,[9]],[1157,[3]],[1367,[9]],[1430,[9]],[1934,[3]],[2255,[3]],[2678,[9]],[2822,[9]],
[3014,[3]],[5513,[9]],[6722,[27]],[6890,[3]],[7310,[3,3]],[7858,[9]],[7919,[3]],[8101,[3]],
[8465,[27]],[8555,[27]],[8738,[3]],[8761,[81]],[9410,[9]],[9634,[27,3]],[9998,[9,3]],
(...)
[3955403663,[27]],[3956535802,[3]],[3957668101,[27,3]],[3965598730,[3]],
[3966732323,[9,3]],[3971268323,[81]],[3972402730,[3]],[3973537297,[27]]]
#VM = 4000
exceptions : List([])
\end{verbatim}\ns

\section{Application to \texorpdfstring{$p$}{Lg}-class groups of some imaginary cyclic fields}\label{sec7}

Considering, now, the case b) of Theorem \ref{prational} for $p>2$, we use the polynomial 
$m_s(T)= T^2-4s$, with $T=t_0+p^2 t$, and the unit of norm $s$:
\[ E_s(T) = \ffrac{1}{2}\big (T + \sqrt{T^2-4s} \big), \] 

\noindent
for suitable $s$ and $t_0$ such that $E_s(T)$ be a local $p$th power at $p$,
which is in particular the case for all $p>2$ and all $s$ when $t_0 = 0$.
For $t_0 \ne 0$, we get the particular data when the equation $t_0^2 \equiv 2s \pmod {p^2}$ 
has solutions (which is equivalent to $p \not \equiv 5 \pmod{8}$):
$(p=3, s \in \{-1,1\}, t_0 = 0)$, $(p=3, s=-1, t_0 \in \{4, 5\})$, $(p=7, s=1, t_0 \in \{10, 39\})$, 
$(p=11, s=-1, t_0 \in \{19, 102\})$, $(p=17, s = -1, t_0 \in \{24, 265\}; s=1, t_0  \in \{45, 244\})$.
For $p=2$, a ``mirror field'' may be taken in $\Q(\sqrt {-1}, \sqrt M)$ (see, e.g., \cite{Gra9} for some
results linking $2$-class groups and norms of units).

\smallskip
The programs are testing that $E_s(T)$ is not the $p$th power in $\langle \varepsilon_M^{} \rangle$.

\subsection{Imaginary quadratic fields with non-trivial \texorpdfstring{$3$}{Lg}-class group}

From the above, we obtain, as consequence, the following selection of illustrations
(see Theorem \ref{infty} claiming that the \fop lists are unbounded as $\BB \to \infty$):

\begin{theorem}\label{cubic}
Let  $t_0 \in \{0,4,5\}$ and $m(t) := (t_0+ 9 t)^2+4$ if $t_0 \ne 0$, 
or $m(t) := (t_0+ 9 t)^2 \pm 4$ if $t_0 = 0$.
As $t$ grows from $1$ up to $\BB$, each first occurrence of a square-free integer 
$M \geq 2$ in the factorization $m(t) =: M r^2$, the quadratic field 
$F_{3,M} := \Q(\sqrt {-3 M})$ has a class number divisible by $3$, except possibly
when the unit $E_s(t_0+ 9 t):=\frac{1}{2} (t_0+ 9 t+ r \sqrt M)$ is a third power 
in $\langle \varepsilon_M^{} \rangle$. 

\smallskip\noindent
The \fop algorithm applied to the subset of parameters $t=2+7 x$ or $t=5+7 x$, 
$x \in \Z_{\geq 0}$ with $m(t)=81 t^2 - 1$, always gives non-trivial $3$-class groups.
Same results with $t= \pm 3 +7 x$ with $m(t)=81 t^2 + 1$.
\end{theorem}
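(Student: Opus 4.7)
The plan is to combine the local cube property of $E_s(T)$ from Theorem~\ref{prational}(b) with the \fop guarantee (Theorem~\ref{fondunit}) that $E_s(T) \in \langle \varepsilon_M \rangle$, in order to manufacture a nontrivial unramified cyclic cubic extension of $K := \Q(\sqrt M)$; Scholz's reflection theorem then transfers the resulting $3$-divisibility of $h(K)$ to $h(\Q(\sqrt{-3M}))$.

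In detail, I first verify that $E_s(T)$ is a local cube at every prime of $K$ above $3$: when $t_0 = 0$ this is Theorem~\ref{prational}(b)(i) applied to $T = 9t \equiv 0 \pmod 9$; when $t_0 \in \{4,5\}$, one checks $t_0^2 \equiv -2 \pmod 9$, so Theorem~\ref{prational}(b)(ii) applies with $s=-1$, and $m(t) = T^2 + 4$ coincides with $T^2 - 4s$. Writing $E_s(T) = \varepsilon_M^n$, the exception ``third power in $\langle \varepsilon_M \rangle$'' is precisely $3 \mid n$. Outside that exception the local cube property descends from $\varepsilon_M^n$ to $\varepsilon_M$ (since $\gcd(n,3)=1$), and a fundamental unit is never a cube in $K^\times$ (otherwise $\varepsilon_M^{3k-1} = \pm 1$ with $\varepsilon_M > 1$, which has no integer solution), so $\varepsilon_M \notin K^{\times 3}$. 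Therefore $K(\sqrt[3]{\varepsilon_M})/K$ is a nontrivial cyclic cubic extension, unramified outside $3$ because $\varepsilon_M$ is a unit and unramified above $3$ by the local cube property. Hence the $3$-rank of the class group of $K$ is at least $1$, and Scholz's reflection theorem ($r_3(\Q(\sqrt M)) \leq r_3(\Q(\sqrt{-3M}))$ for $M>0$ squarefree, $M\neq 3$) yields $3 \mid h(\Q(\sqrt{-3M}))$.

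For the last assertion it suffices to rule out the exception on the prescribed subsets, which is exactly Theorem~\ref{infty}(i) with $p=3$, $q=7$: the non-cubes of $\F_7^\times$ are $c \in \{2,3,4,5\}$, and solving $t_q \equiv (c^2+s)/(18c) \pmod 7$ gives $t_q \in \{2,5\}$ for $s=1$ (polynomial $m(t)=81t^2-1$) and $t_q \in \{3,-3\}$ for $s=-1$ (polynomial $m(t)=81t^2+1$). For $t = t_q + 7x$ one obtains $E_s(T) \equiv c$ in the residue field at a prime $\mathfrak q \mid 7$ of $K$; since the cubes of $\F_{49}^\times$ meet $\F_7^\times$ only in $\{\pm 1\}$ (elements of $\F_7^\times \setminus \{\pm 1\}$ have order $3$ or $6$, neither dividing $|\F_{49}^\times|/3 = 16$), the non-cube $c$ remains a non-cube in $\F_{49}^\times$, so $E_s(T)$ is not a local cube at $\mathfrak q$ and a fortiori cannot be a global cube. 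The exception is therefore impossible and the first part of the theorem applies to every $M$ so produced.

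The main substantive step is the appeal to Scholz's reflection; a direct Kummer-theoretic construction of the unramified cubic extension of $\Q(\sqrt{-3M})$ inside the biquadratic field $L = K(\sqrt{-3}) = \Q(\sqrt{-3M})(\sqrt M)$ is possible, using that the class $[\varepsilon_M] \in L^\times/L^{\times 3}$ has prescribed $\Gal(L/\Q)$-transformation ($\sigma$-fixed, $\rho$-inverted), but it demands a careful eigenspace decomposition of the $3$-part of the class group of $L$, which Scholz's theorem packages in a single inequality.
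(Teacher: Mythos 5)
There is a genuine gap in your main reduction. You claim that $K(\sqrt[3]{\varepsilon_M^{}})/K$ with $K=\Q(\sqrt M)$ is a ``nontrivial cyclic cubic extension'' which is unramified, and you deduce $3 \mid h(\Q(\sqrt M))$, feeding this into the easy direction of Scholz, $r_3(\Q(\sqrt M)) \leq r_3(\Q(\sqrt{-3M}))$. This fails at the first step: since $\zeta_3 \notin K$, the extension $K(\sqrt[3]{\varepsilon_M^{}})/K$ is not Galois (its Galois closure is $K(\zeta_3,\sqrt[3]{\varepsilon_M^{}})$, which is ramified over $K$ at $3$ through $K(\zeta_3)/K$), so it is not cyclic and, being non-abelian over $K$, it cannot sit inside the Hilbert class field of $K$; a merely unramified-at-finite-primes non-Galois cubic gives no conclusion about $h(K)$. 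Indeed the intermediate claim $3 \mid h(\Q(\sqrt M))$ is simply false for the fields produced by the theorem: the paper's own table for Theorem \ref{cubic} contains the entry ${\sf [58,[3],1]}$, i.e.\ $M=58$ with $3$-class group $\Z/3\Z$ for $\Q(\sqrt{-174})$, whereas $h(\Q(\sqrt{58}))=2$. Your argument, if valid, would prove that every $M$ in these \fop lists has $3\mid h(\Q(\sqrt M))$, which is far stronger than the theorem and wrong. The correct mechanism — and the one the paper uses — is that the $3$-primary, non-cube unit gives an unramified cyclic cubic extension of $L:=K(\zeta_3)$ whose Kummer radical lies in the $\chi$-eigenspace of $L^\times/L^{\times 3}$ ($\chi$ the quadratic character of $K$); by reflection the corresponding Galois quotient lives in the $\omega\chi^{-1}=\omega\chi$-component of the class group of $L$, which is exactly the $3$-class group of the mirror field $\Q(\sqrt{-3M})$, with no detour through $h(K)$ at all. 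This is the content of Scholz's theorem as invoked in the paper; your final paragraph actually sketches this eigenspace route, but your proof as written does not use it and instead relies on the false implication.

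The remaining ingredients of your proposal are sound and agree with the paper: the verification that $t_0\in\{4,5\}$ satisfies $t_0^2 \equiv 2s \pmod 9$ with $s=-1$ (Theorem \ref{prational}(b)), the descent of the local-cube property from $E_s(T)=\varepsilon_M^n$ to $\varepsilon_M^{}$ when $3\nmid n$, the fact that $\varepsilon_M^{}$ is never a cube in $K^\times$, and the exclusion of the exceptional case on the subfamilies $t=t_q+7x$ via Theorem \ref{infty}(i) with $q=7$ (your residue computation $t_q\in\{2,5\}$ for $s=1$, $t_q\in\{3,4\}$ for $s=-1$, and the argument that a non-cube of $\F_7^\times$ remains a non-cube in $\F_{49}^\times$ because its order is $3$ or $6$, neither dividing $16$, match the paper's \S\,\ref{3ex}). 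So the fix is localized: replace the passage through $3\mid h(K)$ by the reflection argument over $L$ (equivalently, quote the precise form of Scholz's theorem: a $3$-primary fundamental unit that is not a cube forces $3 \mid h(\Q(\sqrt{-3M}))$), and the rest of your proof stands.
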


\begin{proof}
If $E_s(t_0+ 9 t)$ is not a third power in $\langle \varepsilon_M^{} \rangle$ but a local $3$th power 
at $3$, it is $3$-primary in the meaning that if $\zeta_3$ is a primitive $3$th root 
of unity, then $K(\zeta_3, \sqrt[3]{E_s((t_0+ 9 t))}/K(\zeta_3)$ is unramified (in fact $3$ splits 
in this extension). From reflection theorem (Scholz's Theorem in the present case), 
$3$ divides the class number of $\Q(\sqrt{-3 M})$, even when $r > 1$ in the factorization
$m(t) =: M r^2$. The case of $t_0=0$ and $s = \pm 1$ is obvious.
The second claim comes from Theorem \ref{infty} (see numerical part below).
\end{proof}

\subsubsection{Program for lists of $3$-class groups of imaginary quadratic fields}

Note that the case where $E_s(t_0+ 9 t)$ is a third power is very rare because it 
happens only for very large $t_0+ 9 t$ giving a small Kummer radical $M$.
One may verify the claim by means of the following program,
in the case $s=-1$ valid for all $t_0$, where
${\sf [M,Vh]}$ gives in ${\sf Vh}$ the $3$-structure of the class 
group of $\Q(\sqrt {-3 M})$; at the end of each output, one sees the list of 
exceptions (case of third powers), where the output ${\sf [M,n]}$ means
that for the Kummer radical $M=M(t)$, then $E_{-1}(t_0+ 9 t)=\varepsilon_M^n$. 
We may see that any excerpt for $t$ large enough give no exceptions:

\smallskip
\ft\begin{verbatim}
LISTS OF 3-CLASS GROUPS OF IMAGINARY QUADRATIC FIELDS
{p=3;B=10^5;L3=List;Lh=List;Lt0=List([0,4,5]);for(t=1,B,
for(ell=1,3,t0=Lt0[ell];mt=(t0+9*t)^2+4;ut=(t0+9*t)/2;vt=1/2;
C=core(mt,1);M=C[1];r=C[2];res=Mod(M,4);D=quaddisc(M);w=quadgen(D);
Y=quadunit(D);if(res!=1,Z=ut+r*vt*w);if(res==1,Z=ut-r*vt+2*r*vt*w);
z=1;n=0;while(Z!=z,z=z*Y;n=n+1);C3=List;K=bnfinit(x^2+3*M,1);
CK=K.cyc;d=#CK;for(j=1,d,Cl=CK[d-j+1];val=valuation(Cl,3);
if(val>0,listinsert(C3,3^val,1)));L=List([M,C3,n]);
listput(Lh,vector(3,c,L[c]))));Vh=vecsort(vector(3*B,c,Lh[c]),1,8);
print(Vh);print("#Vh = ",#Vh);
for(k=1,#Vh,LC=Vh[k][2];if(LC==List([]),Ln=List([Vh[k][1],Vh[k][3]]);
listput(L3,vector(2,c,Ln[c]))));V3=vecsort(L3,1,8);
print("exceptional powers : ",V3)}
[M,C3,n]=
[[2,[],15],[5,[],9],[10,[],3],[13,[],3],[17,[],3],[26,[],3],[29,[3],5],[37,[],3],
[41,[],3],[53,[],3],[58,[3],1],[61,[],3],[65,[],3],[74,[3],1],[82,[3],1],[85,[3],1],
[101,[],3],[106,[3],1],[109,[3],1],[113,[3],1],[122,[3],1],[137,[3],1],[145,[],3],
[149,[],3],[170,[],3],[173,[9],1],[181,[3],1],[197,[],3],[202,[3],1],[226,[],3],
[229,[3],3],[257,[3],1],[290,[],3],[293,[],3],[314,[3],1],[317,[],3],[353,[3],1],
[362,[],3],[365,[],3],[397,[3],1],[401,[3],1],[442,[],3],[445,[],3],[461,[9],1],
[485,[],3],[530,[],3],[533,[9],1],[577,[],3],[610,[3],1],[626,[3],1],[629,[],3],
[653,[3],1],[677,[],3],[730,[3],1],[733,[9],1],[754,[3],1],[773,[3],1],[785,[3],3],
[842,[3],1],[877,[3],1],[901,[],3],[962,[],3],[965,[9],1],[997,[3],1],[1009,[3],1],
(...)
[809976600173,[27],1],[809983800085,[81],1],[809991000029,[27],1],[810009000029,[9],1]]
#Vh = 299963
exceptional powers:List([[2,15],[5,9],[10,3],[13,3],[17,3],[26,3],[37,3],[41,3],[53,3],
[61,3],[65,3],[101,3],[145,3],[149,3],[170,3],[197,3],[226,3],[290,3],[293,3],[317,3],
[362,3],[365,3],[442,3],[445,3],[485,3],[530,3],[577,3],[629,3],[677,3],[901,3],[962,3],
[1093,3],[1226,3],[1370,3],[1601,3],[1853,3],[2117,3],[2305,3],[2605,3],[2813,3],
[3029,3],[3253,3],[4229,3],[5045,3],[6245,3],[6893,3],[8653,3]])
\end{verbatim}\ns

Then $M_\BB = 810016200085$ and $\log(810016200085)/\log(81\cdot 10^{10})
\approx 1.0000007293$; then $M_\BB^{\frac{1}{3}} \approx 9321.76$ give a good
verification of the Heuristic \ref{heuristic}. This also means that {\it all the integers $M$
larger than $9029$} leads to non-trivial $3$-class groups, and they are very numerous !

\smallskip
We note that some $M$'s (as $29$, $74$, $82$, $85$,\,$\ldots$) are in the list of 
exceptions despite a non-trivial $3$-class group; this is equivalent to the fact that, 
even if $E_{-1}(t_0+ 9 t) \in \langle \varepsilon_M^3 \rangle$, either the $3$-regulator 
${\mathcal R}_K$ of $K$ is non-trivial or its $3$-class group is non-trivial.

\subsubsection{Unlimited lists of non-trivial $3$-class groups}
To finish, let's give the case where the \fop algorithm {\it always} gives a non-trivial
$3$-class group in $\Q(\sqrt {-3M})$; we use together the $4$ parametrizations 
given by Theorem \ref{cubic} (outputs ${\sf [M, [3\, class\ group]]}$):

\smallskip
\ft\begin{verbatim}
NON TRIVIAL 3-CLASS GROUPS OF IMAGINARY QUADRATIC FIELDS
{p=3;B=10^4;Lh=List;Lm=List([List([-1,3]),List([-1,4]),List([1,2]),List([1,5])]);
for(t=1,B,for(ell=1,4,s=Lm[ell][1];t0=Lm[ell][2];M=core(81*(t0+7*t)^2-s);C3=List;
K=bnfinit(x^2+3*M);CK=K.cyc;d=#CK;for(j=1,d,Cl=CK[d-j+1];
val=valuation(Cl,3);if(val>0,listinsert(C3,3^val,1)));L=List([M,C3]);
listput(Lh,vector(2,c,L[c]))));Vh=vecsort(vector(4*B,c,Lh[c]),1,8);
print(Vh);print("#Vh = ",#Vh)}
[M,C3]=
[[58,[3]],[74,[3]],[106,[3]],[359,[3]],[386,[3]],[401,[3]],[410,[3]],[494,[3]],[515,[3]],
[610,[3]],[674,[3]],[842,[3]],[877,[3]],[1009,[3]],[1157,[3]],[1367,[3]],[1430,[3]],
[1901,[9,3]],[1934,[9]],[2153,[3]],[2255,[3]],[2678,[9]],[2822,[3]],[2986,[3]],[3014,[3]],
[5266,[3]],[5513,[3]],[6626,[9]],[6707,[3]],[6722,[3]],[6890,[3]],[7310,[3,3]],[7858,[27]],
[7919,[3]],[8101,[3]],[8465,[9]],[8555,[9]],[8738,[3]],[8761,[9]],[9410,[3]],[9634,[9,3]],
[9998,[3,3]],[11183,[3]],[11237,[3]],[11447,[3]],[11509,[27]],[11537,[3]],[11663,[3,3]],
[11965,[3]],[13427,[3]],[16895,[3]],[16913,[3,3]],[17266,[9]],[18530,[3]],[20066,[3]],
(...)
[396877320323,[3]],[396922680323,[9]],[396934020730,[3]],[396945361297,[3]]]
#Vh = 40000
\end{verbatim}\ns

\subsection{Imaginary cyclic fields with non-trivial \texorpdfstring{$p$}{Lg}-class 
group, \texorpdfstring{$p>3$}{Lg}}

Let $\chi$ be the even character of order $2$ defining $K := \Q(\sqrt M)$, let $p \geq 3$
and let $L:=K(\zeta_p)$ be the field obtained by adjunction of a primitive $p$th root of
unity; we may assume that $K \cap \Q(\zeta_p)=\Q$, otherwise $M=p$
in the case $p \equiv 1 \pmod 4$, case 
for which there is no known examples of $p$-primary fundamental unit. 
Let $\omega$ be the $p$-adic Teichm\"uller character (so that for all 
$\tau \in {\rm Gal}(L/\Q)$, $\zeta_p^\tau = \zeta_p^{\omega(\tau)}$). 

\smallskip
Then, for any list of quadratic fields $\Q(\sqrt M)$ obtained by the previous \fop
algorithm giving $p$-primary units $E$, the $\omega \chi^{-1}$-component 
of the $p$-class group of $L$ is non-trivial as soon as
$E \notin \langle \varepsilon_M^p \rangle$ and gives an odd component
of the whole $p$-class group of $L$.

\begin{theorem}\label{quintic}
As $t$ grows from $1$ up to $\BB$, each first occurrence of a square-free integer 
$M \geq 2$ in the factorization $m(t) := p^4 t^2 - 4s =: M r^2$, 
the degree $p-1$ cyclic imaginary subfield of $\Q(\sqrt M, \zeta_p)$, distinct from 
$\Q(\zeta_p)$, has a class number divisible by $p$, except possibly when the unit 
$E_s(p^2 t ) := \frac{1}{2} [p^2 t +  r \sqrt M)]$ is a $p$-th power in 
$\langle \varepsilon_M^{} \rangle$. 
\end{theorem}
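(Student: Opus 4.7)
The plan is to carry out the same strategy as Theorem \ref{cubic}, but with Scholz's reflection theorem replaced by the Leopoldt Spiegelungssatz applied to the biquadratic-by-cyclotomic extension $L := K(\zeta_p)$, where $K := \Q(\sqrt M)$. The exceptional hypothesis of the theorem is exactly what is needed to rule out the trivial Kummer situation.

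\emph{Setup.} By Theorem \ref{prational}(b)(i) in the case $t_0 = 0$, the unit $E_s(p^2 t) = \frac{1}{2}(p^2 t + r \sqrt{M})$ is a local $p$-th power at every prime of $K$ above $p$; in other words, it is $p$-primary. Assume now that $E_s(p^2 t) \notin \langle \varepsilon_M^{} \rangle^p$, so that its class in $\mathcal{O}_K^\times / (\mathcal{O}_K^\times)^p$ is non-trivial (recall that $\mu(K) = \{\pm 1\}$ and $p > 2$). Passing to $L = K(\zeta_p)$, Kummer theory produces a cyclic extension $L\bigl(\sqrt[p]{E_s(p^2 t)}\bigr) / L$ of degree exactly $p$, which is unramified outside $p$ (since $E_s(p^2 t)$ is a unit) and also unramified at the primes above $p$ (by $p$-primarity); in total, everywhere unramified.

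\emph{Reflection step.} By class field theory, this extension corresponds to a non-trivial quotient of the $p$-class group $\mathcal{C}_L$. Decompose $\mathcal{C}_L \otimes \F_p$ under the action of $\Delta := {\rm Gal}(L/\Q) \simeq \langle \sigma \rangle \times \langle \tau \rangle$, where $\sigma$ generates ${\rm Gal}(K/\Q)$ and $\tau$ generates ${\rm Gal}(\Q(\zeta_p)/\Q)$; let $\chi$ be the non-trivial character of $\langle \sigma \rangle$ and $\omega$ the Teichm\"uller character of $\langle \tau \rangle$. Because $E_s(p^2 t) \in K^\times$ satisfies $E_s(p^2 t)^\sigma = s \cdot E_s(p^2 t)^{-1}$, its image in $\mathcal{O}_L^\times /(\mathcal{O}_L^\times)^p$ lies in the $\chi$-isotypic component; via the Galois-equivariance of the Kummer pairing (which twists the action by $\omega$), the non-trivial unramified Kummer extension produced above lives in the $\omega \chi^{-1}$-isotypic component of $\mathcal{C}_L$. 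This is Leopoldt's Spiegelungssatz in the form that generalises the $p=3$ Scholz case used in Theorem~\ref{cubic}.

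\emph{Identification with $F_{p,M}$.} It remains to identify this eigenspace with the $p$-class group of $F_{p,M}$. The field $F_{p,M} = \Q\bigl((\zeta_p - \zeta_p^{-1}) \sqrt M\bigr)$ is the fixed field in $L$ of the involution $\sigma \cdot c$, where $c = \tau^{(p-1)/2}$ is complex conjugation. One checks that $\omega \chi^{-1}$ is trivial on $\sigma c$, whence the $\omega \chi^{-1}$-component of $\mathcal{C}_L$ is fixed by ${\rm Gal}(L/F_{p,M})$ and therefore, since $[L:F_{p,M}] = 2$ is coprime to $p$, embeds (via the norm/inclusion) into the $p$-part of the class group of $F_{p,M}$. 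Non-triviality of this eigenspace thus forces $p \mid h(F_{p,M})$.

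\emph{Main obstacle.} The genuinely delicate point is the reflection step and its precise character bookkeeping: one must check that the image in $\mathcal{C}_L \otimes \F_p$ of the class of a $p$-primary unit of $K$, viewed in the $\chi$-isotypic part of the unit side, is non-zero in the $\omega \chi^{-1}$-isotypic part of the class side, and that no collapse occurs at the boundary between the Kummer pairing on units modulo $p$-th powers and the Galois group of the maximal unramified Kummer $p$-extension of $L$. The rest (existence of the unramified Kummer extension and descent from $L$ to $F_{p,M}$) is then formal.
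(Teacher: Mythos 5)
Your proposal is correct and takes essentially the same route as the paper: the paper's own (sketched) argument, given in the paragraph preceding the theorem and modelled on the Scholz case $p=3$ of Theorem \ref{cubic}, likewise combines the local $p$-th power property of $E_s(p^2t)$ from Theorem \ref{prational} (case $t_0=0$) with the reflection theorem to deduce non-triviality of the $\omega\chi^{-1}$-component of the $p$-class group of $L=\Q(\sqrt M,\zeta_p)$ from the everywhere-unramified Kummer extension $L\big(\sqrt[p]{E_s(p^2t)}\big)/L$. Your explicit character bookkeeping and the descent from this eigenspace to the mirror field $F_{p,M}$ via the involution $\sigma c$ (using that $[L:F_{p,M}]=2$ is prime to $p$) correctly fill in the details the paper leaves implicit.
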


\subsubsection{Lists of \texorpdfstring{$5$}{Lg}-class groups of
cyclic imaginary quartic fields}

The following program for $p=5$ verifies the claim with the above parametrized family
testing if $E_s(p^2 t )$ is a $p$-power in $\langle \varepsilon_M^{} \rangle$. For $p=5$, 
the mirror field $F_{5,M}$ is defined by the polynomial 
$${\sf P=x^4+5*M*x^2+5*M^2}, $$ 
still giving a particular faster program than the forthcoming one, 
valuable for any $p \geq 3$:

\smallskip
\ft\begin{verbatim}
LISTS OF 5-CLASS GROUPS OF QUARTIC FIELDS
{p=5;B=100;s=-1;Lp=List;Lh=List;p2=p^2;p4=p^4;for(t=1,B,
mt=p4*t^2-4*s;ut=p2*t/2;vt=1/2;C=core(mt,1);M=C[1];r=C[2];
res=Mod(M,4);D=quaddisc(M);w=quadgen(D);Y=quadunit(D);
if(res!=1,Z=ut+r*vt*w);if(res==1,Z=ut-r*vt+2*r*vt*w);z=1;n=0;
while(Z!=z,z=z*Y;n=n+1);P=x^4+5*M*x^2+5*M^2;K=bnfinit(P,1);
CK=K.cyc;C5=List;d=#CK;for(i=1,d,Cl=CK[d-i+1];
val=valuation(Cl,p);if(val>0,listinsert(C5,p^val,1)));L=List([M,C5]);
listput(Lh,vector(2,c,L[c])));Vh=vecsort(vector(B,c,Lh[c]),1,8);
print(Vh);print("#Vh = ",#Vh);
for(k=1,#Vh,if(Vh[k][2]==List([]),listput(Lp,Vh[k])));Vp=vecsort(Lp,1,8);
print("exceptions:",Vp)}
s=-1
[M,C5]=
[[89,[5]],[509,[5,5]],[626,[25,5]],[629,[5,5]],[761,[5]],[2501,[5]],[3554,[25]],
[5626,[5,5]],[5629,[5]],[10001,[5]],[15626,[5,5]],[15629,[25]],[22501,[5]],
[30626,[5,5]],[30629,[5]],[40001,[5]],[50626,[25,5]],[50629,[5]],[62501,[25,25]],
[75626,[5]],[75629,[5]],[90001,[5,5]],[105626,[125,25]],[105629,[5,5]],
(...)
[5175629,[125]],[5405629,[5]],[5640629,[5]],[5880629,[5]],[6125629,[5]]
#Vh = 100
exceptions:List([])

s=1
[M,C5]=
[[39,[5]],[51,[5]],[69,[5]],[114,[5]],[326,[5]],[434,[25]],[574,[5,5]],[674,[5]],[791,[5]],
[1086,[5]],[1111,[5,5]],[1406,[5]],[1761,[5]],[1914,[5,5]],[3981,[5]],[4171,[5,5]],
[5621,[5]],[8789,[5,5]],[10421,[5]],[11289,[5,5]],[13611,[5]],[14189,[5]],[15621,[25]],
[18906,[5]],[20069,[5,5]],[20501,[5,5]],[22499,[25,25]],
(...)
[4730621,[25,5]],[5405621,[5]],[5640621,[25]],[5880621,[5,5,5]],[6125621,[5]]]
#Vh = 100
exceptions:List([])
\end{verbatim}\ns

Taking $\BB = 200$ with $s=-1$ leads to the exceptional case ${\sf [29,[\ ]]}$.
For $s=1$ one gets the exceptional case ${\sf [21,[\ ]]}$.

\subsubsection{General program giving the \texorpdfstring{$p$}{Lg}-class group
of degree \texorpdfstring{$p-1$}{Lg} imaginary fields}

The following general program computes the defining polynomial $P$ of the algebraic 
number field $F_{p,M} := \Q \big((\zeta_p-\zeta_p^{-1}) \sqrt{M} \big)$; it tests if the unit $E_s(p^2 t)$ is
the $p$th power in $\langle \varepsilon_M^{} \rangle$, giving the list of exceptions.
One has to choose ${\sf p, \BB, s}$:

\smallskip
\ft\begin{verbatim}
LISTS OF p-CLASS GROUPS OF DEGREE p-1 IMAGINARY FIELDS I
{p=5;B=500;s=-1;Lp=List;Lh=List;Zeta=exp(2*I*Pi/p);p2=p^2;p4=p^4;
for(t=1,B,mt=p4*t^2-4*s;ut=p2*t/2;vt=1/2;C=core(mt,1);M=C[1];r=C[2];
res=Mod(M,4);D=quaddisc(M);w=quadgen(D);Y=quadunit(D);
if(res!=1,Z=ut+r*vt*w);if(res==1,Z=ut-r*vt+2*r*vt*w);z=1;n=0;
while(Z!=z,z=z*Y;n=n+1);P=1;for(i=1,(p-1)/2,A=(Zeta^i+ Zeta^-i-2)*M;
P=(x^2-A)*P);P=round(P);k=bnfinit(P,1);Ck=k.cyc;Cp=List;d=#Ck;
for(i=1,d,Cl=Ck[d-i+1];val=valuation(Cl,p);if(val>0,listinsert(Cp,p^val,1)));
L=List([M,Cp]);listput(Lh,vector(2,c,L[c])));Vh=vecsort(vector(B,c,Lh[c]),1,8);
print(Vh);print("#Vh = ",#Vh);
for(k=1,#Vh,if(Vh[k][2]==List([]),listput(Lp,Vh[k])));Vp=vecsort(Lp,1,8);
print("exceptions:",Vp)}
s=-1
[M,Cp]=
[[29,[]],[89,[5]],[509,[5,5]],[626,[25,5]],[629,[5,5]],[761,[5]],[2501,[5]],[3554,[25]],
[5626,[5,5]],[5629,[5]],[10001,[5]],[15626,[5,5]],[15629,[25]],[19109,[5]],[22061,[5,5]],
[22501,[5]],[30626,[5,5]],[30629,[5]],[40001,[5]],[42341,[5]],[50626,[25,5]],
[50629,[5]],[62501,[25,25]],[70429,[25]],[75626,[5]],[75629,[5]],[82234,[5]],
[90001,[5,5]],[105626,[125,25]],[105629,[5,5]],[122501,[5]],[140626,[5]],[140629,[5,5]],
(...)
[147015629,[5]],[148230629,[5]],[149450629,[5]],[150675629,[5,5,5]],
[151905629,[5]],[153140629,[5]],[154380629,[5]],[155625629,[5,5]]]
#Vh = 500
exceptions:List([[29,[])]])
s=1
[M,Cp]=
[21,[]],[39,[5]],[51,[5]],[69,[5]],[114,[5]],[326,[5]],[434,[25]],[514,[5]],[574,[5,5]],
[581,[5,5]],[674,[5]],[791,[5]],[874,[5]],[1086,[5]],[1111,[5,5]],[1191,[5]],[1351,[25]],
[1406,[5]],[1641,[5]],[1761,[5]],[1851,[5]],[1914,[5,5]],[2399,[5]],[2599,[25]],
[3251,[25]],[3981,[5]],[4171,[5,5]],[5474,[5]],[5621,[5]],[5774,[5]],[8294,[25,5]],
[8789,[5,5]],[10421,[5]],[11289,[5,5]],[13611,[5]],[14189,[5]],[15621,[25]],
(...)
[141015621,[5,5]],[142205621,[5,5]],[143400621,[25,5]],[144600621,[25,5]],
[145805621,[25]],[149450621,[5]],[150675621,[5,5]],[151905621,[5]],
[153140621,[5]],[155625621,[625,5]]
#Vh = 500
exceptions:List([[21,List([])]])
\end{verbatim}\ns

\smallskip
In this interval, all the $5$-class groups obtained are non-trivial, except for 
$s=-1$ and $M=29$, then for $s=1$ and $M=21$.
From Remark \ref{MB}, we compute: 
$$\log(155625629)/\log(5^4 \cdot 25 \cdot 10^4)
\approx 0.99978777. $$ 

Theorem \ref{heuristic} gives possible exceptions
up to $M_\BB^{\frac{1}{5}} =155625629^{\frac{1}{5}} \approx 43.49268545$.

\smallskip
One observes the spectacular decrease of counterexamples and the unique exception 
with $s=-1$, obtained for $t=151$, $p^2 t=25 \cdot 151 = 3775$, 
$m_{-1}(3775) = 701^2 \times 29$; whence the PARI data:
$${\sf Y=Mod(1/2*x + 5/2, x^2 - 29)},\ \ \ 
{\sf Z=Mod(2646275/2*x+14250627/2,x^2-29)}$$ 
(for $\varepsilon_{29}^{}$ and $E_{-1}(3775)$, respectively). One obtains easily 
the relation $E_{-1}(3775) = \varepsilon_{29}^{10}$.
The case $s=1$, $M=21$ is analogous.

\smallskip
Consider the case $p=7$, $s \in \{-1, 1\}$; exceptionally, we give the complete lists:

\ft\begin{verbatim}
p=7  B=100  s=-1
[M,Cp]=
[[37,[7]],[2402,[7]],[2405,[7]],[4706,[7]],[9605,[7]],[10357,[7]],
[11621,[49,7]],[21610,[7,7]],[21613,[7,7]],[38417,[7]],[60026,[7,7]],
[60029,[7]],[86437,[7,7]],[98345,[7]],[117653,[7]],[146077,[7]],
[153665,[7,7]],[177578,[7,7]],[194482,[7,7]],[194485,[49,7]],
[240101,[7]],[290522,[49]],[345745,[49]],[357365,[7]],[405770,[7,7]],
[405773,[49,7]],[470597,[7,7]],[540226,[7]],[540229,[7,7]],
[614657,[7,7]],[693890,[7,7]],[693893,[7]],[760733,[7,7,7]],
[866762,[7,7]],[866765,[7,7]],[960401,[7,7]],[1058842,[7]],
[1058845,[7,7]],[1162085,[49,7]],[1270130,[49,7,7]],[1270133,[7]],
[1382977,[7,7]],[1500626,[49]],[1500629,[7]],[1623077,[7]],
[1750330,[7]],[1882385,[7]],[2019242,[49]],[2019245,[7,7]],
[2160901,[7]],[2307362,[343]],[2307365,[7,7]],[2614690,[7,7]],
[2614693,[7]],[2775557,[7]],[2941226,[7]],[2941229,[49]],
[3111697,[7]],[3286970,[7]],[3286973,[7,7]],[3467045,[7]],
[3651922,[7]],[3841601,[7]],[4036082,[7]],[4036085,[49]],
[4235365,[7]],[4439453,[49]],[4648337,[49,7]],[4862026,[7,7]],
[4862029,[7]],[5080517,[7,7]],[5303810,[7]],[5303813,[7]],
[5531905,[7]],[5764802,[7,7]],[5764805,[7]],[6002501,[7]],
[6245005,[7,7]],[6744413,[49,7]],[7263029,[7]],[7800853,[7]],
[8357885,[7]],[9529573,[49,7,7]],[10144229,[7]],[10778093,[7,7]],
[11431165,[49]],[12103445,[49,7]],[12794933,[7]],[13505629,[7]],
[14235533,[7]],[14984645,[7]],[15752965,[7]],[16540493,[7]],
[17347229,[7]],[18173173,[7,7,7]],[19882685,[7]],[20766253,[7]],
[21669029,[7,7]],[22591013,[7]],[23532205,[7,7]]]
#Vh = 100
exceptions:List([])

p=7  B=100  s=1
[M,Cp]=
[[6,[7]],[741,[7,7]],[817,[7,7]],[1067,[7,7]],[1517,[49]],[2302,[49]],
[2397,[49]],[3477,[7]],[3603,[49,7]],[5402,[2401,7]],[5645,[7,7]],
[8070,[49]],[8441,[7,7]],[10421,[7]],[10842,[7,7]],[12155,[7]],
[13702,[7]],[15006,[49]],[21605,[7,7]],[27165,[7]],[35003,[7]],
[38415,[7]],[42803,[7]],[43637,[7]],[45085,[49]],[55319,[7]],
[56090,[7,7]],[63269,[7]],[64923,[7]],[68295,[7]],[70013,[7]],
[79383,[7]],[86435,[7]],[101442,[7]],[106711,[7]],[117645,[49,7]],
[144210,[49]],[153663,[7,7]],[163418,[7]],[194477,[7]],[216690,[7,7]],
[228245,[7]],[240099,[49,7]],[252255,[7,7]],[264710,[7]],
[290517,[49,7]],[308395,[7]],[345743,[7]],[437582,[7,7]],
[448453,[49,7]],[470595,[7]],[511797,[7]],[540221,[7]],
[640533,[7,7]],[693885,[7]],[735306,[49,7]],[777923,[7]],
[821742,[7]],[866757,[7]],[928653,[49]],[1058837,[49,7]],
[1162083,[7]],[1197565,[343]],[1215506,[7,7]],[1500621,[7,7]],
[1882383,[49,7]],[1927469,[7]],[2019237,[7]],[2160899,[7]],
[2407669,[7]],[2458623,[49]],[2614685,[7,7]],[2941221,[7,7]],
[3111695,[7]],[3651917,[7]],[3841599,[7,7]],[4439445,[7]],
[4648335,[7]],[4862021,[49,49,7]],[5080515,[7]],[5303805,[7]],
[5531903,[7]],[6002499,[7]],[6244997,[7]],[6744405,[7,7]],
[7263021,[7,7]],[7800845,[7]],[8934117,[7]],[9529565,[7]],
[10144221,[7]],[11431157,[7]],[13505621,[7]],[14984637,[7]],
[16540485,[7]],[18173165,[7]],[19018317,[7]],[19882677,[7]],
[20766245,[7]],[22591005,[7]],[23532197,[7]]]
#Vh = 100
exceptions:List([])
\end{verbatim}\ns

Of course, ${\sf \BB=100}$ is insufficient to give smaller Kummer radicals, but it is only a 
question of execution time and memory due to the instruction ${\sf bnfinit(P,1)}$
for ${\sf P}$ of degree $p-1$. 
It is clear that the same program for the \fop algorithm,
without computation of the $p$-class group, gives unlimited lists of degree $p-1$ 
imaginary cyclic fields with non-trivial $p$-class group, as soon as $M > M_\BB^\pow$ 
(cf. Theorem \ref{heuristic}):

\ft\begin{verbatim}
LISTS OF p-CLASS GROUPS OF DEGREE p-1 IMAGINARY FIELDS II
{p=7;B=10^5;s=1;LM=List;p4=p^4;for(t=1,B,mt=p4*t^2-4*s;
M=core(mt);L=List([M]);listput(LM,vector(1,c,L[c])));
VM=vecsort(vector(B-(1+s),c,LM[c]),1,8);
print(s);print(VM)}
s=-1
[M]=
[37],[53],[74],[149],[554],[1373],[2237],[2402],[2405],[3026],[3242],[4706],[5882],
[7373],[9605],[10357],[11621],[18229],
(...)
[24006638717653],[24007599060029],[24008559421613],[24009519802405]]
s=1
[M]=
[5],[6],[101],[145],[206],[215],[570],[629],[663],[731],[741],[817],[887],[894],[1067],
[1207],[1389],[1517],[1893],[2085],[2162],
(...)
[24004718090517],[24005678394477],[24006638717645],[24008559421605]
\end{verbatim}\ns


\begin{thebibliography}{xxxxxx}

\bibitem[AsBo]{AsBo} J. Assim, Z. Bouazzaoui, Half-integral weight modular forms and real 
quadratic $p$-rational fields, Funct.  Approx. Comment. Math. {\bf 63}(2) (2020), 201--213.\\
\url{https://doi.org/10.7169/facm/1851}

\bibitem[Ben]{Ben} Y. Benmerieme, Les corps multi-quadratiques $p$-rationnels,
Th\`ese (2021LIMO0100), Universit\'e de Limoges (2021).
\url{http://aurore.unilim.fr/ori-oai-search/notice/view/2021LIMO0100}

\bibitem[By]{By} D. Byeon, Indivisibility of special values of Dedekind zeta functions 
of real quadratic fields, Acta Arithmetica {\bf 109}(3) (2003), 231--235.
\url{https://doi.org/10.4064/AA109-3-3}

\bibitem[BGKK]{BGKK} G. Boeckle, D.A. Guiraud, S. Kalyanswamy, C. Khare, Wieferich Primes 
and a mod $p$ Leopoldt Conjecture (2018). \url{https://doi.org/10.48550/arXiv.1805.00131}

\bibitem[BeJa]{BeJa} K. Belabas,  J.-F. Jaulent, The logarithmic class group package 
in PARI/GP, Pub. Math. Besan\c con, Alg\`ebre et th\'eorie des nombres {\bf 2016}, 5--18.
\url{https://doi.org/10.5802/pmb.o-1 }

\bibitem[BeMo]{BeMo} Y. Benmerieme, A. Movahhedi, Multi-quadratic $p$-rational 
number fields, Jour. Pure Appl. Algebra {\bf 225}(9) (2021), 1--17.
\url{https://doi.org/10.1016/j.jpaa.2020.106657}

\bibitem[Bou]{Bou} Z. Bouazzaoui, Fibonacci numbers and real quadratic 
$p$-rational fields, Period. Math. Hungar. {\bf 81}(1) (2020), 123--133.
\url{https://doi.org/10.1007/s10998-020-00320-7}

\bibitem[BaRa]{BaRa} R. Barbulescu, J. Ray, Numerical verification of the 
Cohen--Lenstra--Martinet heuristics and of Greenberg's $p$-rationality conjecture
J. Th\'eorie des Nombres de Bordeaux  {\bf 32}(1) (2020), 159--177.
\url{https://doi.org/10.5802/jtnb.1115 }

\bibitem[CLS]{CLS} J. Chattopadhyay, H. Laxmi, A. Saikia, On the $p$-rationality of 
consecutive quadratic fields, Jour. Number Theory {\bf 248} (2023), 14--26.
\url{https://doi.org/10.1016/j.jnt.2023.01.001}

\bibitem[FrIw]{FrIw} J.B. Friedlander, H. Iwaniec, Square-free values of quadratic polynomials,
Proceedings of the Edinburgh Mathematical Society {\bf 53}(2) (2010), 385--392.\\
\url{https://doi.org/10.1017/S0013091508000989}

\bibitem[GrJa]{GrJa} G. Gras, J.-F. Jaulent, Note on $2$-rational fields, 
Jour. Number Theory {\bf 129}(2) (2009), 495--498.\\
\url{https://doi.org/10.1016/j.jnt.2008.06.012}

\bibitem[Gra1]{Gra1} G. Gras, \textit{Class Field Theory: from theory to practice}, 
corr. 2nd ed. Springer Monographs in Mathematics, Springer, xiii+507 pages (2005).

\bibitem[Gra2]{Gra2}  G. Gras, Les $\theta$-r\'egulateurs locaux d'un nombre 
alg\'ebrique : Conjectures $p$-adiques, Canadian Jour. Math. {\bf 68}(3) (2016), 571--624. 
\url{http://doi.org/10.4153/CJM-2015-026-3}

English translation:
\url{https://doi.org/10.48550/arXiv.1701.02618}

\bibitem[Gra3]{Gra3} G. Gras, On $p$-rationality of number fields. 
Applications--PARI/GP programs, Pub. Math. Besan\c con (Th\'eorie des Nombres), 
Ann\'ees 2018/2019.
\url{https://doi.org/10.5802/pmb.35}

\bibitem[Gra4]{Gra4} G. Gras, The $p$-adic Kummer--Leopoldt Constant: Normalized 
$p$-adic Regulator, Int. Jour. Number Theory {\bf 14}(2) (2018), 329--337.
\url{https://doi.org/10.1142/S1793042118500203} 

\bibitem[Gra5]{Gra5} \textsc{G. Gras}, Practice of the Incomplete $p$-Ramification 
Over a Number Field -- History of Abelian $p$-Ramification, 
Communications in Advanced Mathematical Sciences {\bf 2}(4) (2019), 251--280.\\
\url{https://doi.org/10.33434/cams.573729}

\bibitem[Gra6]{Gra6}  G. Gras, Heuristics and conjectures in the
direction of a $p$-adic Brauer--Siegel theorem, Math. Comp. {\bf 88}(318) (2019),
1929--1965. \url{https://doi.org/10.1090/mcom/3395}

\bibitem[Gra7]{Gra7}  G. Gras, Tate--Shafarevich groups in the cyclotomic 
$\widehat \Z$-extension and Weber's class number problem,
Jour. Number Theory {\bf 228} (2021), 219--252.\\
\url{https://doi.org/10.1016/j.jnt.2021.04.019}

\bibitem[Gra8]{Gra8}  G. Gras, Algorithmic complexity of Greenberg's conjecture,
Arch. Math. {\bf 117} (2021), 277--289.\\
\url{https://doi.org/10.1007/s00013-021-01618-9}
 
\bibitem[Gra9]{Gra9}  G. Gras, Sur la norme du groupe des unit\'es
d'extensions quadratiques relatives, Acta Arithmetica {\bf 61} (1992), 307--317.
\url{https://doi.org/10.4064/aa-61-4-307-317}

\bibitem[Gra10]{Gra10}  G. Gras, New characterization of the norm of the 
fundamental unit of $\Q(\sqrt M)$, 2023 (in preparation).

\bibitem[Gre1]{Gre1} R. Greenberg, On the Iwasawa invariants of totally real 
number fields, Amer. J. Math. {\bf 98}(1) (1976),  263--284.
\url{https://doi.org/10.2307/2373625}

\bibitem[Gre2]{Gre2} R. Greenberg, Galois representation with open image, 
Ann. Math. Qu\'e. {\bf 40}(1) (2016), 83--119.\\
\url{https://doi.org/10.1007/s40316-015-0050-6}

\bibitem[Jau1]{Jau1} J-F. Jaulent, $S$-classes infinit\'esimales 
d'un corps de nombres alg\'ebriques, Ann. Inst. Fourier {\bf 34}(2) (1984), 1--27. 
\url{https://doi.org/10.5802/aif.960}

\bibitem[Jau2]{Jau2} J-F. Jaulent,  L'arithm\'etique des $\ell$-extensions 
Th\`ese de doctorat d'Etat, Pub. Math. Besan\c con (Th\'eorie des Nombres)
(1986), 1--349. \url{http://doi.org/10.5802/pmb.a-42}

\bibitem[Jau3]{Jau3}  J-F. Jaulent, Classes logarithmiques des corps de nombres,
J. Th\'eorie des Nombres de Bordeaux  {\bf 6} (1994), 301--325.
\url{https://doi.org/10.5802/jtnb.117}

\bibitem[Jau4]{Jau4} J.-F. Jaulent, Note sur la conjecture de Greenberg,  
Jour. Ramanujan Math. Soc. {\bf 34} (2019), 59--80.
\url{http://www.mathjournals.org/jrms/2019-034-001/2019-034-001-005.html}

\bibitem[Kop]{Kop} J. Koperecz, Triquadratic $p$-rational fields,
Jour. Number Theory, {\bf 242} (2023), 402--408. \\
\url{https://doi.org/10.1016/j.jnt.2022.04.011}

\bibitem[Mai]{Mai} C. Maire, Sur la dimension cohomologique des 
pro-$p$-extensions des corps de nombres, J. Th\'eor. Nombres Bordeaux,
{\bf 17}(2) (2005), 575--606.
\url{https://doi.org/10.5802/jtnb.509}

\bibitem[MaRo1]{MaRo1} C. Maire, M. Rougnant,  Composantes isotypiques de 
pro-$p$-extensions de corps de nombres et $p$-rationalit\'e, 
Publicationes Mathematicae Debrecen {\bf 94}(1/2) (2019), 123--155.\\
\url{https://doi.org/10.5486/PMD.2019.8281}

\bibitem[MaRo2]{MaRo2} C. Maire, M. Rougnant, A note on $p$-rational fields and the 
$abc$-conjecture, Proc. Amer. Math. Soc. {\bf 148}(8) (2020), 3263--3271.
\url{https://doi.org/10.1090/proc/14983}

\bibitem[McL]{McL} J. Mc Laughlin, Polynomial Solutions of Pell's Equation and Fundamental 
Units in Real Quadratic Fields, Jour. London Math. Soc. (2) {\bf 67}(1) (2003), 16--28.\\
\url{https://doi.org/10.1112/S002461070200371X}

\bibitem[McLZ]{McLZ} J. Mc Laughlin, P. Zimmer, Some more Long Continued Fractions, I,
Acta Arithmetica {\bf 127}(4) (2007), 365--389.
\url{http://eudml.org/doc/278353}

\bibitem[Mov]{Mov} A. Movahhedi, Sur les $p$-extensions des corps $p$-rationnels, 
 Th\`ese, Univ. Paris VII, 1988.\\
\url{http://www.unilim.fr/pages_perso/chazad.movahhedi/These_1988.pdf}

\bibitem[Nat]{Nat} M. B. Nathanson, Polynomial pell's equations, Proceedings of
the American Mathematical Society, vol. {\bf 56}(1) (1976), 89--92.
\url{https://doi.org/10.2307/2041581}

\bibitem[Ng]{Ng} T. Nguyen Quang Do, Sur la $\Z_p$-torsion de certains 
modules galoisiens, Ann. Inst. Fourier, {\bf 36}(2) (1986), 27--46. 
\url{https://doi.org/10.5802/aif.1045} 

\bibitem[P]{P} The PARI Group, PARI/GP, version  2.9.0, Universit\'e de Bordeaux (2016). \\
\url{http://pari.math.u-bordeaux.fr/}

\bibitem[PiVa]{PiVa} F. Pitoun, F. Varescon, Computing the torsion of the $p$-ramified 
module of a number field, Math. Comp. {\bf 84}(291) (2015), 371--383.
\url{https://doi.org/10.1090/S0025-5718-2014-02838-X}

\bibitem[Ram]{Ram} A. M. S. Ramasamy, Polynomial solutions for the Pell's
equation, Indian Jour. of Pure and Applied Mathematics, {\bf 25} (1994), 577--581.
\url{https://www.academia.edu/33430848/}

\bibitem[Rud]{Rud} Z. Rudnick, Square-free values of quadratic polynomials, 
Lecture notes (2015).\\
\url{http://www.math.tau.ac.il/~rudnick/courses/sieves2015/squarefrees.pdf}

\bibitem[SaAb]{SaAb} H. Sankari, A. Abdo, On Polynomial Solutions of Pell's Equation,
Hindawi Jour. of Mathematics {\bf 2021} (2021), 1--4.
\url{https://doi.org/10.1155/2021/5379284}

\bibitem[Yo]{Yo} H. Yokoi, On real quadratic fields containing units with norm $-1$,
Nagoya Math. J. {\bf 33} (1968), 139--152.
\url{https://doi.org/10.1017/S0027763000012939}

\end{thebibliography}
\end{document}